\documentclass[12pt]{article}

\usepackage{mathtools}
\usepackage{amsmath,diagbox}
\usepackage{indentfirst}
\usepackage{mathrsfs}
\usepackage{amsfonts}
\usepackage{arydshln}
\usepackage{enumerate}
\usepackage{setspace}
\usepackage{amssymb,amsthm,cases}
\usepackage{amsbsy,pifont}
\usepackage{latexsym,diagbox,tabularx}
\usepackage{amsmath,amscd,bbm,multirow}
\usepackage{graphicx,epsfig,extarrows,dsfont,mathtools}
\usepackage[final,allcolors=blue,colorlinks=true]{hyperref}
\usepackage[normalem]{ulem}
\input xy
\usepackage{caption,subcaption} 
\usepackage{tikz}
\usepackage{tikz-cd}
\usetikzlibrary{decorations.pathreplacing,decorations.markings}
\usetikzlibrary{cd}
\usetikzlibrary{shapes.geometric}
\usetikzlibrary{arrows,arrows.meta}
\usetikzlibrary{graphs,positioning}
\usetikzlibrary{matrix,decorations.pathmorphing}
\usetikzlibrary{math,calc,intersections,through,angles,arrows.meta,shapes.geometric,shadows,quotes,spy,datavisualization,datavisualization.formats.functions,plotmarks}
\tikzset{every picture/.style={samples=300,smooth,line join=round,thick,>=stealth}}
\usepackage{enumerate}
\usepackage{fancyhdr}
\fancypagestyle{plain}{%
	\fancyhf{}
	\fancyfoot[C]{\thepage}

}
\usepackage[sectionbib]{chapterbib}

\tikzset{
	on each segment/.style={
		decorate,
		decoration={
			show path construction,
			moveto code={},
			lineto code={
				\path [#1]
				(\tikzinputsegmentfirst) -- (\tikzinputsegmentlast);
			},
			curveto code={
				\path [#1] (\tikzinputsegmentfirst)
				.. controls
				(\tikzinputsegmentsupporta) and (\tikzinputsegmentsupportb)
				..
				(\tikzinputsegmentlast);
			},
			closepath code={
				\path [#1]
				(\tikzinputsegmentfirst) -- (\tikzinputsegmentlast);
			},
		},
	},
	mid arrow/.style={postaction={decorate,decoration={
				markings,
				mark=at position .5 with {\arrow[#1]{stealth}}
	}}},
}

\numberwithin{equation}{section}

\theoremstyle{plain}
\newtheorem{theorem}{Theorem}[section]

\newtheorem{corollary}[theorem]{Corollary}

\newtheorem{lemma}[theorem]{Lemma}

\theoremstyle{definition}
\newtheorem{definition}[theorem]{Definition}

\newtheorem{remark}[theorem]{Remark}

\newtheorem{assumption}[theorem]{Assumption}
\newtheorem{notation}[theorem]{Notation}

\def\XXint#1#2#3{{\setbox0=\hbox{$#1{#2#3}{\int}$}
		\vcenter{\hbox{$#2#3$}}\kern-.5\wd0}}

\DeclareMathSymbol{\subseteq}{\mathrel}{symbols}{"12}
\DeclareMathSymbol{\supseteq}{\mathrel}{symbols}{"13} 
\DeclareMathSymbol{\subsetneq}{\mathrel}{AMSb}{"28}                  \DeclareMathSymbol{\supsetneq}{\mathrel}{AMSb}{"29}    

\DeclareMathSymbol{\nsubseteq}{\mathrel}{AMSb}{"2A}                  

\DeclareMathSymbol{\nsupseteq}{\mathrel}{AMSb}{"2B}

\DeclareMathDelimiter{\langle}{\mathop}{symbols}{"68}{largesymbols}{"0A}
\DeclareMathDelimiter{\rangle}{\mathclose}{symbols}{"69}{largesymbols}{"0B}

\DeclareSymbolFont{txfontsA}{U}{txmia}{m}{it}
\SetSymbolFont{txfontsA}{bold}{U}{txmia}{bx}{it}
\DeclareFontSubstitution{U}{txmia}{m}{it}
\DeclareMathSymbol{\upalpha}{\mathord}{txfontsA}{"0B}
\DeclareMathSymbol{\upbeta}{\mathord}{txfontsA}{"0C}
\DeclareMathSymbol{\upgamma}{\mathord}{txfontsA}{"0D}
\DeclareMathSymbol{\updelta}{\mathord}{txfontsA}{"0E}
\DeclareMathSymbol{\upepsilon}{\mathord}{txfontsA}{"0F}
\DeclareMathSymbol{\upzeta}{\mathord}{txfontsA}{"10}
\DeclareMathSymbol{\upeta}{\mathord}{txfontsA}{"11}
\DeclareMathSymbol{\uptheta}{\mathord}{txfontsA}{"12}
\DeclareMathSymbol{\upiota}{\mathord}{txfontsA}{"13}
\DeclareMathSymbol{\upkappa}{\mathord}{txfontsA}{"14}
\DeclareMathSymbol{\uplambda}{\mathord}{txfontsA}{"15}
\DeclareMathSymbol{\upmu}{\mathord}{txfontsA}{"16}
\DeclareMathSymbol{\upnu}{\mathord}{txfontsA}{"17}
\DeclareMathSymbol{\upxi}{\mathord}{txfontsA}{"18}
\DeclareMathSymbol{\uppi}{\mathord}{txfontsA}{"19}
\DeclareMathSymbol{\uprho}{\mathord}{txfontsA}{"1A}
\DeclareMathSymbol{\upsigma}{\mathord}{txfontsA}{"1B}
\DeclareMathSymbol{\uptau}{\mathord}{txfontsA}{"1C}
\DeclareMathSymbol{\upupsilon}{\mathord}{txfontsA}{"1D}
\DeclareMathSymbol{\upphi}{\mathord}{txfontsA}{"1E}
\DeclareMathSymbol{\upchi}{\mathord}{txfontsA}{"1F}
\DeclareMathSymbol{\uppsi}{\mathord}{txfontsA}{"20}
\DeclareMathSymbol{\upomega}{\mathord}{txfontsA}{"21}
\DeclareMathSymbol{\upvarepsilon}{\mathord}{txfontsA}{"22}
\DeclareMathSymbol{\upvartheta}{\mathord}{txfontsA}{"23}
\DeclareMathSymbol{\upvarpi}{\mathord}{txfontsA}{"24}
\DeclareMathSymbol{\upvarrho}{\mathord}{txfontsA}{"25}
\DeclareMathSymbol{\upvarsigma}{\mathord}{txfontsA}{"26}
\DeclareMathSymbol{\upvarphi}{\mathord}{txfontsA}{"27}                   

\DeclareSymbolFont{ugmL}{OMX}{mdugm}{m}{n}
\SetSymbolFont{ugmL}{bold}{OMX}{mdugm}{b}{n}
\DeclareMathAccent{\wideparen}{\mathord}{ugmL}{"F3}

\def\pt{\partial}

\def\ra{\rightarrow}
\def\bs{\boldsymbol}

\def\s{\subseteq}

\def\e{\epsilon}

\def\ol{\overline}

\def\vp{\varphi}
\def\lg{\langle}
\def\llg{\left\langle}

\def\rg{\rangle}
\def\rrg{\right\rangle}
\def\es{\emptyset}
\def\fa{\forall}

\def\bf{\textbf}
\def\pt{\partial}

\def\om{\omega}
\def\Om{\Omega}
\def\la{\lambda}
\def\al{\alpha}
\def\be{\beta}
\def\de{\delta}
\def\ga{\gamma}

\def\Ga{\Gamma}

\def\ts{\times}

\def\iy{\infty}

\def\f{\frac}

\def\Lra{\Leftrightarrow}

\def\Span{{\rm{span}}}

\def\df{\mathrm d}

\def\wt{\widetilde}
\def\wh{\widehat}

\def\esssup{\operatorname*{ess\ \! sup}}

\def\hra{\hookrightarrow}

\def\loc{{\rm loc}}

\def\diag{{\rm diag}}

\def\mcA{\mathcal{A}}

\def\mcT{\mathcal{T}}

\def\mcM{\mathcal{M}}

	\def\mbT{\mathbb{T}}

	\DeclareMathOperator{\Div}{div}

	\DeclareMathOperator{\dist}{dist}

	\DeclareMathOperator{\supp}{{supp}}

	\newcommand{\R}{\mathbb R}
	\newcommand{\N}{\mathbb N}

\begin{document} 
	

	\title{A Class of Degenerate Hyperbolic Equations with Neumann Boundary Conditions and Its Application to Observability}
	\author{Dong-Hui Yang and Jie Zhong}
	
	\maketitle{}
	\thispagestyle{empty}
	\begin{abstract}
	We establish a mixed observability inequality for a class of degenerate
	hyperbolic equations on the cylindrical domain
	$\Omega=\mathbb{T}\times(0,1)$ with mixed Neumann--Dirichlet boundary
	conditions. The degeneracy acts only in the radial variable, whereas the
	periodic angular variable allows propagation with a strong tangential
	component, making a direct top-boundary observation delicate. For
	$\alpha\in[1,2)$, we prove that the solution can be controlled by a boundary
	observation on the top boundary together with an interior observation on a
	narrow strip. The proof combines a weighted functional framework, improved
	regularity, a cut-off decomposition in the angular variable, a multiplier
	argument for the localized component, and an energy estimate for the
	remainder.
	\end{abstract}
	
	\begin{center}
		\parbox{0.92\textwidth}{\small\noindent
			\textbf{Keywords.} Degenerate hyperbolic equations; observability inequality; Neumann boundary conditions; multiplier method; weighted Sobolev spaces.
			
			\smallskip
			\textbf{2020 Mathematics Subject Classification.} 35L80, 93B07, 35L05, 35L20.}
	\end{center}
	
	\tableofcontents
	\thispagestyle{empty}
	
	\newpage
	
	\section{Introduction}\label{S1}
	 
	This paper is concerned with the observability of a degenerate hyperbolic
	equation posed on the cylindrical domain $\Omega=\mathbb{T}\times(0,1)$, in
	which the degeneracy acts only in the radial direction while the angular
	variable remains periodic. In the uniformly hyperbolic setting,
	observability and controllability are linked by the Hilbert Uniqueness Method
	(HUM) of J.-L. Lions \cite{Lions}; see also
	\cite{Chen1,Coron,Fattorini1,Fursikov,Lasiecka,Lasiecka3,Rousseau,Russell,Yao,Zuazua}.
	For degenerate hyperbolic equations, however, the geometry of propagation and
	the functional framework become considerably subtler.

	Degenerate hyperbolic equations have been studied in several works
	\cite{Bai1,Cannarsa1,Fragnelli,Gueye,Yang,Zhang}, but the available theory is
	still largely one-dimensional, with \cite{Yang} as a notable higher-dimensional
	exception. In that work, a cut-off method was used to establish
	controllability when the control acts near the degenerate region. The same
	localization philosophy has also appeared in related problems
	\cite{Yang,Yang2}. In particular, \cite{Yang2} used the cut-off method in the
	construction of the Dirichlet map for non-homogeneous degenerate hyperbolic
	equations. These precedents suggest that, in higher-dimensional degenerate
	geometries, localization is not merely a technical convenience but often part
	of the correct analytical structure.
	
	It is natural to ask whether one can observe the system from the nondegenerate
	part of the boundary. In the cylindrical geometry studied here, this question
	becomes substantially more delicate than in the one-dimensional setting.
	In fact, the periodic variable $\theta$ allows propagation with a strong
	angular component, while the degeneracy acts only in the radial direction.
	This mismatch between geometry and observation is the source of the main
	difficulty: a direct top-boundary observation does not isolate the angular
	component in any obvious way.

	The main contribution of this paper is to show that, in the range
	$\alpha\in[1,2)$, the observability problem can still be resolved after a
	suitable localization in the angular variable. The key point is not to
	eliminate the angular propagation, but to separate it analytically. We
	decompose the solution into a component supported away from a narrow angular
	strip and a complementary component concentrated near that strip. The
	localized component is handled by a multiplier argument on a single
	coordinate chart, whereas the remainder is controlled by an energy estimate.
	As a consequence, the final observability inequality naturally takes a mixed
	form: a boundary observation on the top boundary
	$\Gamma=\mathbb{T}\times\{1\}$ together with an interior observation on the
	strip $\omega$.

	From this perspective, the strip $\omega$ is not an auxiliary artifact of the
	proof. It is forced by the geometry of the problem and by the quasimode-type
	obstruction exhibited in Section~\ref{S3}. That mechanism explains why a direct
	top-boundary observation is not the right object to isolate the angular
	component, and why a localized decomposition is the natural replacement.

	The present model therefore provides a concrete higher-dimensional setting in
	which observability can be established for a degenerate hyperbolic equation
	with Neumann boundary conditions. To the best of our knowledge, such results
	do not seem to be available in the existing literature.
	
	A further difficulty comes from regularity. In degenerate hyperbolic
	problems, weak solutions do not automatically possess the level of smoothness
	needed to justify the integrations by parts required by multiplier arguments,
	especially near the degenerate boundary. Thus any observability proof in this
	setting must also address the underlying regularity mechanism.
	
	For degenerate parabolic equations, improved regularity estimates have been
	used successfully in controllability problems; see, for instance,
	\cite{Cannarsa}. The hyperbolic setting is subtler, both because the
	propagation mechanism is different and because the choice of a useful
	multiplier is less rigid. In the present cylindrical geometry, however, the
	regularity of weak solutions can still be improved enough to justify the
	multiplier argument. This extra regularity is an essential part of the proof,
	not a purely auxiliary ingredient. We also expect that the same strategy
	should remain meaningful when $\mathbb{T}$ in Assumption \ref{Assumption (H)}
	is replaced by a smooth manifold without boundary.
	
	We consider the observability of the following degenerate hyperbolic equation:
	\begin{equation}\label{01.21.1}
		\begin{cases}
			\partial_{tt}\vp-\Div(A \nabla \vp)=f, &\text{in }Q, \\
			 \vp=0, &\text{on }   \Ga\ts (0,T), \\ 
			 \f{\pt\vp}{\pt\nu_A}=0, &\mbox{on }\Ga^*\ts (0,T), \\
			 \vp(0)= \vp^0, \pt_t \vp(0)= \vp^1, &\text{in }\Omega,
		\end{cases}
	\end{equation}
	where $A, \Om, Q$ and $\Ga, \Ga^*$,  and $\vp^0, \vp^1$ and $f$ are defined in Assumption \ref{Assumption (H)}. 
	
	\begin{assumption}[Assumptions and Notations]\label{Assumption (H)}
		Let $\N=\{0,1,2,\cdots\}$ and $\N^*=\{1,2,\cdots\}$. 
		
		Let $\Om=\mbT\ts (0,1), \Ga^*=\mbT\ts\{0\}$ and $\Ga=\mbT\ts \{1\}$,  and $T>0$, and $Q=\Om\ts (0,T)$, where $\mbT=\{x\in\R^2\colon |x|=1\}$. Denote  $z=(\theta, r)\in \Om$. 
		
		Let $\al\in [1,2)$ and $w=r^\al$, and $A=\diag(1, w)=\diag(1,r^\al)$.  Denote $\nu_A=A\nu$ on $\pt Q$. 
		
		Let $\vp^0\in H_{\Ga}^1(\Om;w), \vp^1\in L^2(\Om)$ and $f\in L^2(Q)$. Here, the weight Sobolev space $H_\Ga^1(\Om;w)$ will be defined in \eqref{01.14.1}. 
	\end{assumption}
	
\begin{remark}\label{01.14.R-1}
	(1) Equation \eqref{01.21.1} is indeed a degenerate hyperbolic equation with periodic boundary conditions
	\begin{equation*}
		\begin{cases}
			\pt_{tt}y(\theta, r,t)-\Div(A\nabla y(\theta,r,t))=f, &\mbox{in } (0,2\pi)\ts (0,1)\ts (0,T),\\
			y(0, r,t)=y(2\pi, r,t), & \mbox{for all } (r,t)\in (0,1)\ts (0,T),\\
			[r^\al \pt_ry(\theta,r,t)]_{r=0}=0, &\mbox{for all } (\theta,t)\in (0,2\pi)\ts (0,T),\\
			y(\theta,1,t)=0, &\mbox{for all } (\theta,t)\in (0,2\pi)\ts (0,T),\\
			y(0)=y^0, \pt_ty=y^1, &\mbox{in }\Om.
		\end{cases}
	\end{equation*}
	
	(2) We explain the operator $\pt_{\theta\theta}$ in the following:
	
	Let 
	\begin{equation*}
		L_1^2=\Span\left\{f_n^1\equiv \sin n\theta\right\}_{n\in\N^*},\quad L_2^2=\Span\left\{f_n^2\equiv \cos n\theta \right\}_{n\in\N}, \quad L^2(\mbT)=L_1^2\oplus L_2^2. 
	\end{equation*}
	Define 
	\begin{equation*}
		H^k(\mbT)=\left\{f=\sum_{n\in\N^*}a_n^1f_n^1+\sum_{n\in\N}a_n^2f_n^2\colon (a_0^2)^2+\sum_{n\in\N^*} n^{2k}\left((a_n^1)^2+(a_n^2)^2\right)<+\iy\right\}
	\end{equation*}
	with norm
	\begin{equation*}
		\|f\|_{H^k(\mbT)}^2=(a_0^2)^2+\sum_{n\in\N}n^{2k}\left((a_n^1)^2+(a_n^2)^2\right). 
	\end{equation*}
		For each $f= \sum_{n\in\N^*}a_n^1f_n^1+\sum_{n\in\N}a_n^2f_n^2$, we have 
	\begin{equation*}
		\pt_{\theta\theta}: H^k(\mbT)\ra H^{k-2}(\mbT), \mbox{ with } \pt_{\theta\theta}f=-\sum_{n\in\N^*}n^2a_n^1f_n^1-\sum_{n\in\N}n^2a_n^2f_n^2. 
	\end{equation*}
	Hence, we may regard $\partial_\theta$ as an operator acting on $\theta \in [0, 2\pi)$.

		(3) It is clear that the degenerate hyperbolic equation \eqref{01.21.1} is
		degenerate on the boundary $\Ga^*$.

\end{remark}
 
\begin{theorem}\label{02.15.T1}
	Let $\vp^0\in H_\Ga^1(\Om;w)$ and $\vp^1\in L^2(\Om)$. Let $\vp$ be the solution of
		\eqref{01.21.1} with respect to $(\vp^0, \vp^1, f=0)$. If
		$T>\sqrt{2}/(2-\al)$, then there exists a constant $C=C(T,\al,\de_0)>0$
		such that
	\begin{equation*}
		\begin{split}
			\left[(2-\al)T-\sqrt{2}\right]E(0)
			&\leq C\iint_{\Ga\ts (0,T)}(\pt_r\vp)^2\df S\df t\\
			&\quad +C\iint_{\om\ts (0,T)}
			\left[\vp^2+(\pt_t\vp)^2+A\nabla\vp\cdot\nabla\vp\right]\df z\df t,
		\end{split}
	\end{equation*}
	\end{theorem}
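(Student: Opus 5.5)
Write $E(t)=\frac12\int_\Om[(\pt_t\vp)^2+A\nabla\vp\cdot\nabla\vp]\df z$; for $f=0$ it is conserved. The plan follows the strategy announced in the introduction: split $\vp$ by an angular cut-off, apply a multiplier identity to the piece that lives on a single coordinate chart, and absorb the commutator terms into the strip $\om$.

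\textbf{Step 1 (cut-off).} Fix a smooth cut-off $\chi=\chi(\theta)$ on $\mbT$ with $\chi\equiv1$ outside $\om$ and $\chi\equiv0$ on a smaller strip $\om_0\Subset\om$ of width comparable to $\de_0$. Put $u=\chi\vp$ and $v=(1-\chi)\vp$. Then $u$ solves \eqref{01.21.1} with source
\[
f_u=-2\chi'\pt_\theta\vp-\chi''\vp,
\]
which is supported in $\om\ts(0,T)$. Since $\supp\chi\subset\mbT\setminus\om_0$, the $\theta$-variable for $u$ can be taken on an interval of length less than $2\pi$, so $\theta$ is a global coordinate there. The improved regularity results established earlier justify every integration by parts below, including the degenerate trace terms at $r=0$.

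\textbf{Step 2 (multiplier for $u$).} Multiply the equation for $u$ by
\[
M u=\theta\,\pt_\theta u+r\,\pt_r u+\tfrac{c}{2}u
\]
and integrate over $Q$, choosing $\theta$ centred so that $|\theta|\le\pi$ on $\supp\chi$. The key computations are as follows.
\begin{itemize}
\item The $\theta\pt_\theta$ part contributes $\frac12\int[(\pt_t u)^2-(\pt_\theta u)^2+r^\al(\pt_r u)^2]$.
\item The $r\pt_r$ part contributes $\frac12\int[(\pt_t u)^2+(\pt_\theta u)^2-(1-\al)r^\al(\pt_r u)^2]$.
\item At $r=0$ the boundary term $r\cdot r^\al(\pt_r u)^2$ vanishes because $\al<2$ and $r^\al\pt_r u\to0$.
\end{itemize}
Choosing $c=1-\al$ (or optimizing $c$) gives an interior contribution of at least $(2-\al)\int_0^T E_u\,\df t$, up to a lower-order term in $u$ that is handled in Step 3. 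The top boundary $r=1$ contributes $\frac12\iint_\Ga(\pt_r u)^2$; there $u=0$ forces $\pt_\theta u=0$. The endpoint terms $[\int_\Om\pt_t u\,Mu]_0^T$ are bounded by $2\sup_{|\theta|\le\pi}\|(\theta,r)\|\,E_u\le\sqrt2\,E$ after rescaling $\theta$. This rescaling, which turns the strip into the square $(0,1)^2$ whose diagonal has length $\sqrt2$, is where the constant $\sqrt2$ should come from. The source term $\iint f_u\,Mu$ is bounded by $C\iint_\om(\vp^2+|\nabla\vp|_A^2)$ plus $\varepsilon\iint E$.

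\textbf{Step 3 (remainder and assembly).} Conservation of energy for $\vp$ gives
\[
E_\vp\le E_u+C\int_\om[\vp^2+(\pt_t\vp)^2+A\nabla\vp\cdot\nabla\vp],
\]
since $v$ and $\nabla\chi$ are supported in $\om$; the same bound holds with $u$ and $\vp$ exchanged. Integrating over $(0,T)$ and combining with Step 2 yields $[(2-\al)T-\sqrt2]E(0)\le$ RHS. The lower-order $u^2$ terms are controlled by the Hardy/Poincaré inequality in $H^1_\Ga(\Om;w)$, valid for $\al<2$, and their bad part is absorbed. Likewise, $\iint_\Ga(\pt_r u)^2\le\iint_\Ga(\pt_r\vp)^2$ because $\chi\le1$.

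\textbf{Main obstacle.} The hard part is getting the exact constants $2-\al$ and $\sqrt2$ without losses. This requires handling the $\theta\pt_\theta$ multiplier on a chart of length nearly $2\pi$ with the correct normalization, and choosing $c$ so that the $(\pt_\theta u)^2$ coefficient does not go negative. If the coefficients do not balance, my first fallback is to weight the two parts, using $\frac{2-\al}{2}\theta\pt_\theta+r\pt_r$ instead.
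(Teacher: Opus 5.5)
\textbf{Overall.} Your architecture matches the paper's: the angular cut-off, the multiplier $(\theta-\pi)\pt_\theta u+r\pt_r u$ on one chart, a lower-order correction $\frac c2 u$ (the paper's equipartition step, whose weight $\frac{2+\al}{4}$ is what appears in $X(t)$), and a separate energy estimate for $(1-\chi)\vp$. However, the core computation and the constants do not hold up, and the $\sqrt2$ threshold cannot be reached.

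\textbf{The multiplier identity.} In both bullets the spatial terms have the wrong sign. The term $(\theta-\pi)\pt_\theta u$ contributes $\frac12[(\pt_t u)^2+(\pt_\theta u)^2-r^\al(\pt_r u)^2]$. The term $r\pt_r u$ contributes $\frac12[(\pt_t u)^2-(\pt_\theta u)^2+(1-\al)r^\al(\pt_r u)^2]$, plus $-\frac12\iint_{\Ga\ts(0,T)}(\pt_r u)^2$. The sum is $(\pt_t u)^2-\frac{\al}{2}r^\al(\pt_r u)^2$, exactly the paper's $B_1+B_2$, so the degeneracy produces a \emph{negative} term. Consequently $c=1-\al$ fails: for $\al\ge1$ it leaves nonpositive coefficients on $(\pt_\theta u)^2$ and $r^\al(\pt_r u)^2$. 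The best choice is $\frac c2=\frac{2+\al}{4}$, and it gives only $\frac{2-\al}{4}\iint_Q[(\pt_t u)^2+A\nabla u\cdot\nabla u]=\frac{2-\al}{2}\int_0^T E_u\,\df t$, half of what you claim. Your fallback $\frac{2-\al}{2}(\theta-\pi)\pt_\theta u+r\pt_r u+\frac12u$ yields the same coefficient $\frac{2-\al}{4}$ on all three quadratic terms.

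\textbf{Step 3.} Comparing $E_u$ with $E$ at the fixed times $t=0$ and $t=T$ produces terms such as $\int_\om\vp(0)^2\,\df z$, which the space--time observation does not control. The paper avoids this in three moves: it keeps $E_\psi(0)$ on the left, uses $E_\psi(T)=E_\psi(0)+\iint g\,\pt_t\psi$, and bounds $E_\xi(0)$ via the time-averaged energy identity for $\xi$.

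\textbf{The $\sqrt2$ step.} Rescaling $\theta$ is not allowed, because it changes $\pt_{\theta\theta}$, hence $A$, the energy, and the multiplier identity. Without it, $|(\theta-\pi)\pt_\theta u+r\pt_r u|^2\le(|\theta-\pi|^2+r^{2-\al})A\nabla u\cdot\nabla u$. This gives $|\int_\Om\pt_t u\,Mu\,\df z|\le\sqrt{1+\pi^2}\,E_u$ at \emph{each} endpoint, so the scheme closes only for roughly $T>4\sqrt{1+\pi^2}/(2-\al)$.

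The paper reaches $\sqrt2$ by applying Young's inequality with $\e=\sqrt2$ to a single endpoint term. Before that it asserts $\int|H\cdot\nabla\psi+\frac{2+\al}{4}\psi|^2\le2\int A\nabla\psi\cdot\nabla\psi$, which would require $|\theta-\pi|\le1$ on $\supp\psi$. So the paper's proof has the same defect at this point.

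\textbf{The stated threshold is false near $\al=1$.} Let $\phi$ be the first eigenfunction of $-\pt_r(r^\al\pt_r\,\cdot)$ on $(0,1)$, with Dirichlet condition at $1$, natural condition at $0$, and eigenvalue $\mu$. Let $w$ solve $\pt_{tt}w-\pt_{\theta\theta}w+\mu w=0$ on $\mbT$, with $w(0)=\rho_\epsilon(\theta-\pi)\cos N\theta$ and $\pt_t w(0)=0$, where $\rho_\epsilon$ is a bump supported in $(-\epsilon,\epsilon)$. Then $\vp=\phi(r)w(\theta,t)$ solves \eqref{01.21.1} with $f=0$.
\begin{itemize}
\item By finite speed of propagation, $\vp\equiv0$ on $\om\ts(0,T)$ whenever $T<\pi-4\de_0-\epsilon$.
\item The boundary term satisfies $\iint_{\Ga\ts(0,T)}(\pt_r\vp)^2=\phi'(1)^2\int_0^T\|w\|_{L^2(\mbT)}^2\,\df t\le T\phi'(1)^2\|w(0)\|_{L^2(\mbT)}^2=O(N^{-2})E(0)$.
\end{itemize}
Hence for $\al=1$ and $\sqrt2<T<\pi-4\de_0$, letting $N\to\infty$ contradicts the stated inequality. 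What your scheme and the paper's honestly prove is the same mixed estimate with a larger threshold on $T$.
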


\begin{remark}\label{02.15.R1}
The appearance of the interior term on $\omega$ is not an artifact of the proof.
As explained in Section \ref{S3}, the periodic variable $\theta$ allows profiles
with strong angular oscillation and weak radial penetration, so that a purely
top-boundary observation is not the natural quantity to control directly in the
present argument. The cut-off decomposition isolates this angular obstruction on
the strip $\omega$, and the final estimate should therefore be read as a mixed
observability inequality: boundary observation on $\Gamma$ together with
interior observation on $\omega$.
\end{remark}

The proof of Theorem \ref{02.15.T1} rests on three ingredients: a weighted
functional framework adapted to the mixed Neumann--Dirichlet geometry of
\eqref{01.21.1}, a cut-off decomposition $\vp=\psi+\xi$ isolating the angular
obstruction, and the combination of a multiplier estimate for $\psi$ with an
energy estimate for $\xi$.

The paper is organized as follows. In Section \ref{S2} we introduce the
weighted spaces, Green's formula, the Neumann trace, the spectral properties of
the operator $\mcA$, and the well-posedness theory for \eqref{01.21.1}. In
Section \ref{S3} we explain the quasimode-type obstruction that motivates the
cut-off decomposition, derive the multiplier identity for $\psi$, estimate the
localized remainder $\xi$, and complete the proof of Theorem
\ref{02.15.T1}. 
	
\section{Preliminary results}\label{S2}

In this section we collect the analytic ingredients used later in the
observability argument. We introduce the weighted Sobolev spaces associated with
\eqref{01.21.1}, establish Green's formula and the Neumann trace in the present
degenerate setting, describe the spectral properties of the operator $\mcA$,
and recall the well-posedness theory for \eqref{01.21.1}.

\subsection{Solution spaces}
		
We begin with the weighted energy space
\begin{equation*}
	H^1(\Om;w)=\left\{u\in L^2(\Om)\colon \int_\Om \nabla u\cdot A\nabla u\df z<+\iy\right\}.
\end{equation*}
Its inner product and norm are defined by
\begin{equation*}
	(u,v)_{H^1(\Om;w)}=\int_\Om (\nabla u\cdot A\nabla v+uv)\df z, \quad 
	\|u\|_{H^1(\Om;w)}=(u,u)_{H^1(\Om;w)}^\f{1}{2}
\end{equation*}
for all $u,v\in H^1(\Om;w)$. We also introduce
\begin{equation}\label{01.14.1}
	\begin{split} 
	C_\Ga^\iy(\Om)
	&=\{u\in C^\iy(\ol\Om)\colon \mbox{there exists $\de>0$ such that } \dist(\supp u,\Ga)>\de\}, \\
	H_\Ga^1(\Om)
	&=\mbox{the closure of the set $C_\Ga^\iy(\Om)$ in }H^1(\Om),\\
	H_\Ga^1(\Om;w)
	&=\mbox{the closure of the set $C_\Ga^\iy(\Om)$ in }H^1(\Om;w), 
	\end{split} 
\end{equation}
	where $\supp u$ denotes the support of $u$. We then define
	\begin{equation*}
		H_\Ga^{-1}(\Om;w)\mbox{ is the dual space of }H_\Ga^1(\Om;w) \mbox{ with pivot } L^2(\Om).
	\end{equation*}

\begin{remark}\label{01.22.R1}
	It is clear that if $u \in H_\Gamma^1(\Omega)$ (or $u \in H_\Gamma^1(\Omega; w)$), then $u = 0$ on $\Gamma$ in the sense of the classical Sobolev trace.
\end{remark}
	 
Next we set
\begin{equation*}
	H^2(\Om;w)=\left\{u\in H^1(\Om;w)\colon \mcA u\in L^2(\Om)\right\}.
\end{equation*}
Its inner product and norm are defined by
\begin{equation*}
	(u,v)_{H^2(\Om;w)}=\int_\Om (\mcA u)(\mcA v)\df z+(u,v)_{H^1(\Om;w)},\quad \|u\|_{H^2(\Om;w)}=(u,u)_{H^2(\Om;w)}^\f{1}{2}
\end{equation*}
for all $u,v\in H^2(\Om;w)$. Finally, we let
\begin{equation*}
	D(\mcA)=H_\Ga^1(\Om;w)\cap H^2(\Om;w). 
\end{equation*}

It is well known that
\begin{equation*}
	H_\Ga^1(\Om;w), \mbox{ and } H^1(\Om;w), \mbox{  and }  H^2(\Om;w) 
\end{equation*}
are Hilbert spaces; see \cite{GC,Heinonen}.  

\begin{remark}\label{02.11.R1}
	The space $C^\iy(\ol\Om)$ is dense in $H^2(\Om;w)$. 
	
		In fact, $H^2(\Om)$ is contained in $H^2(\Om;w)$ because $\int_\Om \nabla u\cdot A\nabla u\df z\leq \int_\Om |\nabla u|^2\df z$, and 
	\begin{equation*}
		 \int_\Om [\mcA u]^2\df z=\int_\Om \left[\pt_{\theta\theta} u+\al r^{\al-1} \pt_ru +r^\al \pt_{rr}u\right]^2\df z\leq C\|u\|_{H^2(\Om)}^2 
	\end{equation*}
		by $\al\in [1,2)$. On the other hand, since $H^2(\Om;w)\s H_\loc^2(\Om)$, it follows that $H^2(\Om)$ is dense in $H^2(\Om;w)$. Hence $C^\iy(\ol\Om)$ is dense in $H^2(\Om;w)$. 
\end{remark} 

The following lemma is a variant of Hardy's inequality.

\begin{lemma}\label{01.04.L1}
	(1) Let $\al\in (1,2)$. Then there exists a constant $C>0$, depending only on $\al$, such that 
	\begin{equation*}
		\int_\Om r^{\al-2}u^2\df z\leq C\int_\Om r^\al (\pt_ru)^2\df z
	\end{equation*}
	for all $u\in H_\Ga^1(\Om;w)$. 
	
	(2) Let $\al=1$. For each $\be>0,\la>0$, there exists a constant $C>0$, depending only on $\be$ and $\la$, such that 
	\begin{equation*}
		\la\int_\Om r^{-1+\be}u^2\df z\leq C\int_\Om r(\pt_ru)^2\df z
	\end{equation*}
	for all $u\in H_\Ga^1(\Om;w)$. 
\end{lemma}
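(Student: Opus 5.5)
We prove both parts for smooth $u\in C_\Ga^\iy(\Om)$ and then pass to $H_\Ga^1(\Om;w)$ by density. The definition \eqref{01.14.1} makes this legitimate: the right-hand sides are continuous in the $H^1(\Om;w)$ norm, and the left-hand sides are handled by Fatou's lemma along an a.e.\ convergent subsequence. Since the weights depend only on $r$, Fubini reduces everything to a one-dimensional inequality for $v(r)=u(\theta,r)$, with $\theta\in\mbT$ fixed. Such a $v$ is smooth on $[0,1]$ and vanishes near $r=1$. Integrating the 1D bound over $\theta$ then gives the claim.

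For part (1), with $\al\in(1,2)$, write
\[
r^{\al-2}v^2=\frac{1}{\al-1}\,\frac{\df}{\df r}\bigl(r^{\al-1}\bigr)v^2
\]
and integrate by parts on $(\e,1)$. The boundary term at $r=1$ vanishes because $v=0$ near $1$. The term at $r=\e$ is $-\frac{1}{\al-1}\e^{\al-1}v(\e)^2$, which tends to $0$ since $\al>1$ and $v$ is bounded. This leaves
\[
\int_0^1 r^{\al-2}v^2\,\df r=-\frac{2}{\al-1}\int_0^1 r^{\al-1}vv'\,\df r .
\]
Now split $r^{\al-1}=r^{(\al-2)/2}\,r^{\al/2}$ and apply Cauchy--Schwarz:
\[
\int_0^1 r^{\al-2}v^2\,\df r\le \frac{2}{\al-1}\Bigl(\int_0^1 r^{\al-2}v^2\,\df r\Bigr)^{1/2}\Bigl(\int_0^1 r^{\al}(v')^2\,\df r\Bigr)^{1/2}.
\]
The left-hand side is finite because $\al-2>-1$, so we may divide by its square root. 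This gives the 1D bound with $C=4/(\al-1)^2$.

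Part (2), with $\al=1$, is the delicate case, since the critical Hardy constant degenerates as $\al\to1$. The extra factor $r^\be$ restores integrability, and the plan is to repeat the same computation with the weight $r^{\be-1}=\frac1\be\frac{\df}{\df r}(r^\be)$. The boundary term $\frac1\be\e^\be v(\e)^2$ tends to $0$ as $\e\to0$. This yields
\[
\int_0^1 r^{\be-1}v^2\,\df r=-\frac2\be\int_0^1 r^\be vv'\,\df r .
\]
Next split $r^\be=r^{(\be-1)/2}\,r^{1/2}\,r^{\be/2}$ and use $r^{\be/2}\le1$ on $(0,1)$. Cauchy--Schwarz then gives
\[
\int_0^1 r^{\be-1}v^2\,\df r\le\frac{4}{\be^2}\int_0^1 r(v')^2\,\df r .
\]
Multiplying by $\la$ gives the claim with $C=4\la/\be^2$, so $C$ depends only on $\be$ and $\la$.

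The only subtle point is the density step. Elements of $H_\Ga^1(\Om;w)$ need not be bounded near $r=0$, which is especially relevant when $\al=1$. That is why the integration by parts is carried out only for smooth $u$ vanishing near $\Ga$. The boundary term at $r=0$ is then harmless, and the inequality extends to the closure by the lower semicontinuity argument described above.
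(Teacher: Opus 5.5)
Your proof is correct, but in both parts it takes a different route from the paper.

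For (1), the paper does not integrate by parts. It writes $u(\theta,r)=-\int_r^1\pt_r u(\theta,s)\,\df s$, applies Cauchy--Schwarz with the weights $s^{\pm(\al+1)/2}$, and exchanges the order of integration. This gives the same constant $4/(\al-1)^2$. That pointwise argument has no absorption step, so it never needs $\int r^{\al-2}u^2<\infty$. Your integration-by-parts version does need this, which is harmless because you work with smooth $u$ first.

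For (2), the paper's argument is more roundabout:
\begin{enumerate}
\item It first proves $\int_\Om u^2\,\df z\le 8\int_\Om r(\pt_r u)^2\,\df z$, which is \eqref{01.28.1}.
\item It expands $0\le\int_{\{\de<r<1\}}(r^{1/2}\pt_r u+r^{(\be-1)/2}u)^2\,\df z$ and integrates the cross term by parts.
\item It notes that the resulting coefficient $(\la+1)r^{\be/2}-\be/2$ is nonpositive near $r=0$.
\item It bounds the contribution from $\{r>A\}$ by a multiple of $\int u^2$ and absorbs it via \eqref{01.28.1}. The resulting constant $C(\be,\la)$ is non-explicit.
\end{enumerate}
Your direct Hardy computation, using $r^{\be-1}=\be^{-1}(r^\be)'$ and $r^{\be/2}\le 1$, is shorter and handles both cases by one mechanism. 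It gives the explicit constant $4\la/\be^2$, which shows the dependence on $\la$ is merely linear. It also needs no auxiliary Poincar\'e bound.

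You also justify the density step, via Fatou along an a.e.\ convergent subsequence together with continuity of the right-hand side. The paper only asserts this step.
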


\begin{proof}
	By density, it suffices to prove the statement for $u\in C_\Ga^\iy(\Om)$. 
	
	(1) We prove the case $\al\in (1,2)$.

	From $u=0$ on $\Ga$, we get
	\begin{equation*}
		\begin{split}
			|u(\theta,r)|^2
			&=\left(\int_r^1\pt_r u(\theta, s)\df s\right)^2\leq \left(\int_r^1 s^{\f{\al+1}{2}}|\pt_ru(\theta,s)|^2\df s\right)\left(\int_r^1 s^{-\f{\al+1}{2}}\df s\right)\\
			&\leq \f{2}{\al-1}r^{\f{-\al+1}{2}}\int_r^1s^\f{\al+1}{2}|\pt_ru(\theta,s)|^2\df s, 
		\end{split}
		\end{equation*}
		and therefore
	\begin{equation*}
		\begin{split}
			\int_0^1 r^{\al-2}|u(\theta, r)|^2\df r
			&\leq \f{2}{\al-1}\int_0^1 \int_r^1r^\f{\al-3}{2}s^\f{\al+1}{2}\left|\pt_r(\theta,s)\right|^2\df s\df r \\
			&=\f{2}{\al-1}\int_0^1\int_0^s r^\f{\al-3}{2}s^\f{\al+1}{2}\left|\pt_ru(\theta,s)\right|^2\df r\df s\\
			&=\f{4}{(\al-1)^2}\int_0^1s^\al |\pt_ru(\theta,s)|^2\df s. 
		\end{split}
	\end{equation*}
	Hence 
	\begin{equation*}
		\begin{split}
			\int_\Om r^{\al-2}u^2\df z=\int_\mbT \int_0^1 r^{\al-2}u^2\df r\df \theta\leq \f{4}{(\al-1)^2}\int_\mbT\int_0^1 r^\al |\pt_ru(\theta,r)|^2\df r\df \theta.
		\end{split}
	\end{equation*}
	
	(2) We prove the case $\al=1$. 
	
	From $\int_\Om r u^2\df z\leq  \int_\Om u^2\df z$, and 
	\begin{equation*}
		\begin{split}
			\int_\Om \left|\nabla (r u^2)\right|\df z&=\int_\Om \left|u^2+2r u\nabla u\right|\df z \leq  5\int_\Om u^2\df z+2\int_\Om \nabla u\cdot A\nabla u\df z, 
		\end{split}
	\end{equation*}
		we obtain $r u^2\in W_0^{1,1}(\Om)$. Then 
	\begin{equation*}
		\begin{split}
			\int_\Om u^2\df z
			&=\int_\Om \pt_{r }\left[r u^2\right]\df z  -2\int_\Om r  u\pt_{r }u\df z=-2\int_\Om r u\pt_{r }u\df z\leq \f{1}{2}\int_\Om u^2\df z+4\int_\Om r \left|\pt_{r }u\right|^2\df z, 
		\end{split}
	\end{equation*}
		This implies that 
	\begin{equation}\label{01.28.1}
		\int_\Om u^2\df z\leq 8\int_\Om r \left|\pt_{r }u\right|^2\df z. 
	\end{equation}
	
	Now, let $\be>0,\la>0$. For each $u\in H_\Ga^1(\Om;w)$, since for each $\de\in (0,\f{1}{8})$ we have  
	\begin{equation*}
		0\leq \int_{\{\de<r <1\}} \left(r ^\f{1}{2}\pt_{r }u+r ^\f{-1+\be}{2}u\right)^2\df z, 
		\end{equation*}
		where $\{\de<r<1\}\equiv \{z=(\theta,r)\in \Om\colon \theta\in\mbT, \de<r<1\}$.
		Thus
	\begin{equation*}
		\begin{split}
			0\leq \int_{\{\de<r <1\}}r (\pt_{r }u)^2\df z+\int_{\{\de<r <1\}} r ^{-1+\be}u^2\df z+2\int_{\{\de<r <1\}}r ^\f{\be}{2}u\pt_{r }u\df z. 
		\end{split}
	\end{equation*}
		This implies that 
	\begin{equation*}
		\begin{split}
			0
			&\leq \int_{\{\de<r <1\}}r (\pt_{r }u)^2\df z+\int_{\{\de<r <1\}} r ^{-1+\be}u^2\df z-\int_{\{r =\de\}}r ^\f{\be}{2}u^2\df z -\f{\be}{2}\int_{\{\de<r <1\}}r ^{-1+\f{\be}{2}}u^2\df z. 
		\end{split}
	\end{equation*}
	Then
	\begin{equation*}
		\begin{split}
			&\int_{\{r =\de\}}r ^\f{\be}{2}u^2\df \theta +\la\int_{\{\de<r <1\}} r ^{-1+\be}u^2\df z\\
			&\leq \int_{\{\de<r <1\}}r (\pt_{r }u)^2\df z+(\la+1)\int_{\{\de<r <1\}} r ^{-1+\be}u^2\df z-\f{\be}{2}\int_{\{\de<r <1\}} r ^{-1+\f{\be}{2}}u^2\df z\\
			&\leq \int_{\{\de<r <1\}}r (\pt_{r }u)^2\df z+\int_{\{\de<r <1\}} \left( (\la+1)r ^\f{\be}{2}-\f{\be}{2}\right)r ^{-1+\f{\be}{2}}u^2\df z.
		\end{split}
	\end{equation*}
		Since $\be>0$, we have
	\begin{equation*}
		\begin{split}
			(\la+1)r ^\f{\be}{2}-\f{\be}{2}\leq 0 \mbox{ for all } 0\leq r \leq \left(\f{\be}{2(\la+1)}\right)^\f{2}{\be}.
		\end{split}
	\end{equation*}
	Denote 
	\begin{equation*}
		\begin{split}
			A=\min\left\{1,\left(\f{\be}{2(\la+1)}\right)^\f{2}{\be} \right\},\quad C(\be,\la)=\sup_{r \in [0,1]}\left\{\left[(\la+1)r ^\f{\be}{2}-\f{\be}{2}\right]r ^{-1+\f{\be}{2}}\right\}. 
		\end{split}
	\end{equation*}
		Since $A\ra 0^+$ as $\la\ra +\iy$ or as $\be\ra 0^+$, and $C(\be,\la)\ra +\iy$ as $\la\ra +\iy$ or as $\be\ra 0^+$ (take $r=(\f{\be}{\la+1})^\f{2}{\be}$), we obtain
	\begin{equation*}
		\begin{split}
			\la\int_{\{\de<r <1\}} r ^{-1+\be}u^2\df z\leq \int_{\{\de<r <1\}}r (\pt_{r }u)^2\df z+C(\be,\la)\int_{\{A<r <1\}}u^2\df z.
		\end{split}
	\end{equation*}
	Letting $\de\ra 0^+$, we get
	\begin{equation*}
		\begin{split}
			\la\int_{\Om} r ^{-1+\be}u^2\df z\leq \int_{\Om}r (\pt_{r }u)^2\df z+C(\be,\la)\int_{\Om}u^2\df z.
		\end{split}
	\end{equation*}
		Together with \eqref{01.28.1}, this yields
	\begin{equation}\label{01.30.2}
		\la \int_{\Om} r ^{-1+\be}u^2\df z\leq C(\be,\la)\int_{\Om}r (\pt_{r }u)^2\df z.
	\end{equation}
	This completes the proof of this lemma. 
\end{proof}

\begin{remark}\label{01.21.R1}
	From Lemma \ref{01.04.L1} we get
	\begin{equation*}
		\begin{split} 
		\int_\Om u^2\df z
		&\leq \int_\Om r^{\al-2}u^2\df z\leq C\int_\Om r^\al (\pt_ru)^2\df z\leq C\int_\Om \nabla u\cdot A\nabla u\df z \mbox{ when }\al\in (1,2), \\
		\int_\Om u^2\df z
		&\leq \int_\Om r^{-1+\f{1}{2}}u^2\df z\leq C\int_\Om r(\pt_ru)^2\df z\leq C\int_\Om \nabla u\cdot A\nabla u\df z \mbox{ when } \al=1
		\end{split} 
	\end{equation*}
	for all $u\in H_\Ga^1(\Om;w)$, where the constants $C>0$ depending only on $\al$. This is the Poincar\'e inequality. Hence, in what follows, we use the inner product and norm 
	\begin{equation*}
		(u,v)_{H_\Ga^1(\Om;w)}=\int_\Om \nabla u\cdot A\nabla v\df z, \quad \|u\|_{H_\Ga^1(\Om;w)}=(u,u)_{H_\Ga^1(\Om;w)}^\f{1}{2}
	\end{equation*}
	on $H_\Ga^1(\Om;w)$, respectively. 
\end{remark}

\begin{lemma}\label{01.21.L1}
	The embedding $H_\Ga^1(\Om;w)\hra L^2(\Om)$ is compact. 
\end{lemma}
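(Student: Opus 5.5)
The plan is to show that bounded sets of $H_\Ga^1(\Om;w)$ are precompact in $L^2(\Om)$. We split $\Om$ into a thin layer near the degenerate boundary $\Ga^*$ and a region where the operator is uniformly elliptic. Let $(u_k)$ be bounded in $H_\Ga^1(\Om;w)$, say $\|u_k\|_{H_\Ga^1(\Om;w)}\le M$. By density we may take $u_k\in C_\Ga^\iy(\Om)$ whenever a pointwise computation is needed, since the estimates below pass to the limit.

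\textbf{Step 1: uniform smallness near $r=0$.} We show that
\begin{equation*}
\sup_k\int_{\{0<r<\de\}}u_k^2\df z\to 0\quad\mbox{as }\de\to0^+.
\end{equation*}
For $\al\in(1,2)$ this follows from Lemma \ref{01.04.L1}(1). Since $r^{2-\al}\le\de^{2-\al}$ on $\{r<\de\}$,
\begin{equation*}
\int_{\{r<\de\}}u_k^2\df z\le \de^{2-\al}\int_\Om r^{\al-2}u_k^2\df z\le C\de^{2-\al}M^2 .
\end{equation*}
For $\al=1$ we use Lemma \ref{01.04.L1}(2) with $\be=\f12$ and $\la=1$. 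This gives $\int_\Om r^{-1/2}u_k^2\df z\le CM^2$, and therefore
\begin{equation*}
\int_{\{r<\de\}}u_k^2\df z\le \de^{1/2}\int_\Om r^{-1/2}u_k^2\df z\le C\de^{1/2}M^2 .
\end{equation*}
Both bounds are uniform in $k$, which gives the claim.

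\textbf{Step 2: compactness away from the degeneracy.} On $\Om_\de=\mbT\ts(\de,1)$ we have $r^\al\ge\de^\al$. Hence
\begin{equation*}
\int_{\Om_\de}|\nabla u_k|^2\df z\le \de^{-\al}\int_\Om\nabla u_k\cdot A\nabla u_k\df z\le \de^{-\al}M^2 .
\end{equation*}
The Poincar\'e inequality of Remark \ref{01.21.R1} bounds $\|u_k\|_{L^2}$, so $(u_k)$ is bounded in $H^1(\Om_\de)$. The set $\Om_\de$ is a bounded Lipschitz domain: it is a compact annulus-type cylinder, with the periodic direction $\mbT$ causing no boundary. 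The classical Rellich--Kondrachov theorem therefore yields a subsequence converging in $L^2(\Om_\de)$.

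\textbf{Step 3: diagonal argument.} We take $\de_j=1/j$ and extract nested subsequences, convergent in $L^2(\Om_{\de_j})$, for each $j$. The diagonal subsequence $(u_{k_m})$ then converges in $L^2(\Om_\de)$ for every $\de>0$. For $\e>0$ we first choose $\de$ by Step 1 so that $\sup_k\|u_k\|_{L^2(\{r<\de\})}^2<\e$. This gives
\begin{equation*}
\|u_{k_m}-u_{k_n}\|_{L^2(\Om)}^2\le 4\e+\|u_{k_m}-u_{k_n}\|_{L^2(\Om_\de)}^2 ,
\end{equation*}
and the last term is small for large $m,n$. So the subsequence is Cauchy in $L^2(\Om)$, which proves compactness.

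The only delicate point is Step 1 in the borderline case $\al=1$, where the standard Hardy inequality fails. It is handled by the weakened Hardy estimate in Lemma \ref{01.04.L1}(2) with any $\be\in(0,1)$, which still gives a positive power $\de^{1-\be}$. The rest is routine.
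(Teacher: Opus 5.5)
Your proof is correct and follows the same route as the paper. Near $r=0$ you get uniform smallness of $\int_{\{r<\de\}}u_k^2\,\df z$ from Lemma \ref{01.04.L1}, using $\be=\f12$, $\la=1$ when $\al=1$; away from $r=0$ you apply Rellich compactness on $\mbT\ts(\de,1)$, where the weight is bounded below. The only difference is minor: the paper first extracts a weak limit and normalizes it to $0$, whereas you finish with a diagonal extraction and a Cauchy-sequence argument, which works equally well.
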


\begin{proof}
	Let $\|u_n\|_{H_\Ga^1(\Om;w)}\leq M\ (n\in\N^*)$ for some $M>0$. Then there exists a subsequence of $\{u_n\}_{n\in\N^*}$, still denoted by itself, and $u^*\in H_\Ga^1(\Om;w)$ such that $u_n\ra u^*$ weakly in $H_\Ga^1(\Om;w)$. We will show that there exists a subsequence of $\{u_n\}_{n\in\N^*}$, still denoted by itself, such that $u_n\ra u^*$ strongly in $L^2(\Om)$. Without loss of generality, we assume $u^*=0$ (for otherwise, replacing $u_n$ by $u_n-u^*$ for all $n\in\N^*$). 
	
	(1) Let  $\al\in (1,2)$. Let $\e>0$. Since for all $n\in\N^*$ we have 
	\begin{equation*}
		\begin{split} 
		\int_{\Om_\de}u_n^2\df z
		&\leq \de^{2-\al}\int_{\Om_\de} r^{\al-2}u_n^2\df z\leq \de^{2-\al}\int_\Om \nabla u_n\cdot A\nabla u_n\df z\leq M\de^{2-\al}
		\end{split} 
	\end{equation*}
	by Remark \eqref{01.21.R1}, choosing $\de<\de_0=(\f{1}{4M}\e^2)^\f{1}{2-\al}$, then  $\int_{\Om_{\de_0}}u_n^2\df z<\f{1}{4}\e^2$ for all $n\in\N^*$. Note that $u_n|_{\Om_{\de_0}}\in H^1(\Om_{\de_0})$ for all $n\in\N^*$, and $H^1(\Om_{\de_0})\hra L^2(\Om_{\de_0})$ is compact, then there exists a subsequence, still denoted by itself, such that $u_n|_{\Om_{\de_0}}\ra 0$ strongly in $L^2(\Om_{\de_0})$. Hence there exists $n_0\in\N^*$ such that for all $n\geq n_0$ we have $\|u_n|_{\Om_{\de_0}}\|_{L^2(\Om_{\de_0})}<\f{1}{2}\e$, and this implies that $\int_{\Om}u^2\df z<\e$. 
	
	(2) Let $\al=1$. Taking $\be=\f{1}{2}$ and $\la=1$ in \eqref{01.30.2}, then 
	\begin{equation*}
		\int_\Om r^{-\f{1}{2}}u^2\df z\leq C\int_\Om r(\pt_ru)^2\df z, 
	\end{equation*}
	where the constant $C$ is absolute. By the same argument as (1), we complete the proof of this lemma. 
\end{proof}

\subsection{Green's formula}

Green's formula is the foundation for performing integration by parts for solutions of \eqref{01.21.1}. In this section, we establish this formula. 

We introduce the following spaces
\begin{equation*}
	H(\Div,\Om)=\left\{\bf{u}\in [L^2(\Om)]^2\colon \int_\Om \bf{u}\cdot \bf{u}\df z<+\iy,  \int_\Om [\Div\bf{u}]^2\df z<+\iy\right\}
\end{equation*}
with inner product
\begin{equation*}
	(\bf{u},\bf{v})_{H(\Div,\Om)}=\int_\Om \big(\bf{u}\cdot\bf{v}+[\Div\bf{u}][\Div\bf{v}]\big)\df z; 
\end{equation*}
Additional, we define
\begin{equation*}
	H(\Div,\Om;w^{-1})=\left\{\bf{u}\in [L^2(\Om)]^2\colon \int_\Om \bf{u}\cdot A^{-1}\bf{u}\df z<+\iy, \int_\Om [\Div\bf{u}]^2\df z<+\iy\right\}
\end{equation*}
with inner product 
\begin{equation*}
	(\bf{u},\bf{v})_{H(\Div,\Om;w^{-1})}=\int_\Om\big(\bf{u}\cdot A^{-1}\bf{v}+[\Div\bf{u}][\Div\bf{v}]\big)\df z. 
\end{equation*}

\begin{lemma}\label{01.22.L1}
	The spaces $(H(\Div,\Om),(\cdot,\cdot)_{H(\Div,\Om)})$ and $(H(\Div,\Om;w^{-1}),(\cdot,\cdot)_{H(\Div,\Om;w^{-1})})$ are Hilbert spaces. 
\end{lemma}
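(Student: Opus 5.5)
The plan is to prove both completeness statements the same way: identify each space with a closed subspace of a known Hilbert space. I will treat $H(\Div,\Om;w^{-1})$ first, since $H(\Div,\Om)$ is the special case in which the weight matrix $A^{-1}$ is replaced by the identity.

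First I check that the bilinear forms are genuine inner products. Symmetry and bilinearity are immediate. For positivity, $A^{-1}=\diag(1,r^{-\al})$ is positive definite almost everywhere in $\Om$, so $\int_\Om \bf{u}\cdot A^{-1}\bf{u}\df z=0$ forces $\bf{u}=0$ a.e. I also record that $r^{-\al}\geq 1$ on $(0,1)$, so
\begin{equation*}
	\int_\Om \bf{u}\cdot\bf{u}\df z\leq \int_\Om \bf{u}\cdot A^{-1}\bf{u}\df z .
\end{equation*}
This gives the continuous inclusion $H(\Div,\Om;w^{-1})\hookrightarrow H(\Div,\Om)$.

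For completeness, take a Cauchy sequence $\{\bf{u}_n\}$ in $H(\Div,\Om;w^{-1})$. Write $\bf{u}_n=(u_n^1,u_n^2)$. Then $\{u_n^1\}$ and $\{r^{-\al/2}u_n^2\}$ are Cauchy in $L^2(\Om)$, and $\{\Div\bf{u}_n\}$ is Cauchy in $L^2(\Om)$. Let the limits be $u^1$, $v$ and $g$, and set $u^2=r^{\al/2}v$. Since $r^{\al/2}\leq 1$, we get $u_n^2\to u^2$ in $L^2(\Om)$, so $\bf{u}=(u^1,u^2)$ satisfies $\bf{u}_n\to\bf{u}$ in $[L^2(\Om)]^2$ and $\int_\Om\bf{u}\cdot A^{-1}\bf{u}\df z=\|u^1\|^2+\|v\|^2<\infty$.

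The one substantive step is identifying $g$ with $\Div\bf{u}$, where the divergence is understood in the distributional sense. Fix a test function $\phi\in C_c^\iy(\Om)$; on $\Om=\mbT\ts(0,1)$ such functions are periodic in $\theta$ and compactly supported in $r\in(0,1)$. Then
\begin{equation*}
	\int_\Om (\Div\bf{u}_n)\phi\df z=-\int_\Om \bf{u}_n\cdot\nabla\phi\df z .
\end{equation*}
Pass to the limit using the $L^2$ convergences of $\Div\bf{u}_n$ and $\bf{u}_n$ to obtain $\int_\Om g\phi\df z=-\int_\Om\bf{u}\cdot\nabla\phi\df z$, i.e. $\Div\bf{u}=g\in L^2(\Om)$. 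Therefore $\bf{u}\in H(\Div,\Om;w^{-1})$, and $\bf{u}_n\to\bf{u}$ in that norm.

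The unweighted space $H(\Div,\Om)$ is handled by the same argument with $r^{-\al}$ replaced by $1$. There is no real obstacle here. The only point needing care is the convention that $\Div$ is taken in the distributional sense on the periodic domain, so that limits commute with the divergence as above.
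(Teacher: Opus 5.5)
Your proof is correct and follows essentially the same route as the paper: take a Cauchy sequence, extract the $L^2$ limits of the components and of the divergences, and identify the limit of $\Div\mathbf{u}_n$ as the distributional divergence of the limit field by testing against $C_0^\iy(\Om)$. The only minor variation is in the weighted case. You construct the second component directly as $r^{\al/2}v$, using $r^{\al/2}\le 1$. The paper instead first passes to the limit in $H(\Div,\Om)$ via the continuous embedding, and then identifies the $L^2$ limit of $A^{-1/2}\mathbf{u}_n$ with $A^{-1/2}$ of that limit by a second distributional argument. You also verify positive definiteness and convergence in the weighted norm explicitly, which the paper leaves implicit.
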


\begin{proof}
		It is obvious that $(\cdot,\cdot)_{H(\Div,\Om)}$ and
		$(\cdot,\cdot)_{H(\Div,\Om;w^{-1})}$ are bilinear functionals. Hence
		$H(\Div,\Om)$ and $H(\Div,\Om;w^{-1})$ are inner product spaces. 
	
	Let $\{\bf{u}_n\}_{n\in\N}$ be a Cauchy sequence in $H(\Div,\Om)$. Then there exists $\bf{u}^*$ and $v$ such that 
	\begin{equation*}
		\bf{u}_n\ra \bf{u}^* \mbox{ strongly in } [L^2(\Om)]^2, \mbox{ and } \Div\bf{u}\ra v\mbox{ strongly in } L^2(\Om). 
	\end{equation*}
	Then for each $\psi\in C_0^\iy(\Om)$ we have 
	\begin{equation*}
		\int_\Om v\psi\df z=\lim_{n\ra\iy}\int_\Om [\Div\bf{u}_n]\psi\df z=-\lim_{n\ra\iy}\int_\Om \bf{u}_n\cdot \nabla \psi\df z=-\int_\Om \bf{u}^*\cdot\nabla \psi\df z, 
	\end{equation*}
	i.e., $v=\Div\bf{u}^*$ in the sense of distribution. Hence $v=\Div\bf{u}^*$ in $L^2(\Om)$. Therefore, $H(\Div,\Om)$ is a Hilbert space. 
	
	Let $\{\bf{u}_n\}_{n\in\N}$ be a Cauchy sequence in $H(\Div,\Om;w^{-1})$.  Note that $H(\Div,\Om;w^{-1})\hra H(\Div,\Om)$ is continuous, then there exists $\bf{u}^*$  such that $\bf{u}_n\ra \bf{u}^*\mbox{ strongly in } H(\Div,\Om)$, i.e., 
	\begin{equation*}
		\bf{u}_n\ra \bf{u}^* \mbox{ strongly in } [L^2(\Om)]^2, \mbox{ and } \Div \bf{u}_n\ra \Div\bf{u}^* \mbox{ strongly in }L^2(\Om). 
	\end{equation*}
	Now, since $A^{-\f{1}{2}}\bf{u}_n\ra \bf{h}$ in $[L^2(\Om)]^2$ for some $\bf{h}\in [L^2(\Om)]^2$, then for each $\Xi\in [C_0^\iy(\Om)]^2$ we have
	\begin{equation*}
		\begin{split} 
		\int_\Om \bf{h}\cdot \Xi\df z
		&=\lim_{n\ra\iy}\int_\Om A^{-\f{1}{2}}\bf{u}_n\cdot \Xi\df z=\lim_{n\ra\iy}\int_\Om \bf{u}_n\cdot A^{-\f{1}{2}}\Xi\df z\\
		&=\int_\Om \bf{u}^*\cdot A^{-\f{1}{2}}\Xi\df z=\int_\Om A^{-\f{1}{2}}\bf{u}^*\cdot \Xi\df z
		\end{split} 
	\end{equation*}
	by $A^{-\f{1}{2}}\Xi\in [C_0^\iy(\Om)]^2$. Hence $\bf{h}=A^{-\f{1}{2}}\bf{u}^*$. This proves $H(\Div,\Om;w^{-1})$ is a Hilbert space. 
\end{proof}

\begin{lemma}\label{01.22.L2}
	The space $[C^\iy(\ol\Om)]^2$ is dense in $H(\Div,\Om)$. However, $[C^\iy(\ol\Om)]^2$ is not dense in $H(\Div,\Om;w^{-1})$. 
\end{lemma}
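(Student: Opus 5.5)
For the density claim in $H(\Div,\Om)$ I would use the classical argument for Lipschitz domains. Here $\Om=\mbT\ts(0,1)$ is periodic in $\theta$, so only the two flat boundary components $r=0$ and $r=1$ need care. Given $\bf{u}\in H(\Div,\Om)$, I would fix a partition of unity $\{\chi_0,\chi_1,\chi_2\}$ in $r$ with $\chi_0$ supported near $r=0$, $\chi_1$ near $r=1$, and $\chi_2$ in the interior. Since $\Div(\chi_i\bf{u})=\chi_i\Div\bf{u}+\nabla\chi_i\cdot\bf{u}\in L^2$, each piece stays in $H(\Div,\Om)$. For $\chi_0\bf{u}$ I would translate outward, $\bf{u}_\e(\theta,r)=(\chi_0\bf{u})(\theta,r+\e)$, which is defined on $\mbT\ts(-\e,1)$ and so on a neighbourhood of $\ol\Om\cap\{r\le 1/2\}$. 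I would then mollify in $(\theta,r)$ with radius $<\e$, doing the $\theta$-mollification periodically. Translation and mollification commute with $\Div$, and both are continuous in $L^2$, so $\bf{u}_\e*\rho_\e\ra\chi_0\bf{u}$ in $H(\Div,\Om)$. The pieces $\chi_1\bf{u}$ (translated inward) and $\chi_2\bf{u}$ are handled the same way. This part is routine.

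For non-density in $H(\Div,\Om;w^{-1})$ I would argue by duality. It suffices to produce a nonzero $\bf{u}\in H(\Div,\Om;w^{-1})$ that is orthogonal to every $\bf{v}\in[C^\iy(\ol\Om)]^2\cap H(\Div,\Om;w^{-1})$. First I would record the constraint carried by smooth elements: for smooth $\bf{v}$ the condition $\int_\Om r^{-\al}v_2^2\df z<\iy$ with $\al\ge1$ forces $v_2(\theta,0)=0$. Next, orthogonality $\int_\Om\bf{u}\cdot A^{-1}\bf{v}+(\Div\bf{u})(\Div\bf{v})\df z=0$ suggests the ansatz $p=\Div\bf{u}$, $\bf{u}=A\nabla p$. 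With this ansatz, integration by parts reduces the orthogonality to the interior equation $\Div(A\nabla p)=p$ together with the vanishing of the boundary terms $\int_{\pt\Om}p\,\bf{v}\cdot\nu\df S$. At $r=0$ this boundary term drops out automatically because $v_2(\cdot,0)=0$, which is exactly where the weight enters. Admissibility of $\bf{u}$ requires $\int_\Om A\nabla p\cdot\nabla p\df z<\iy$ and $p\in L^2$. I would seek $p$ in separated form $p=\cos(n\theta)q(r)$, which turns the problem into the ODE $(r^\al q')'=(1+n^2)q$. Its regular branch at $r=0$ satisfies $q(0)\ne0$ and has finite energy.

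The main obstacle is the boundary term at $r=1$: it requires $q(1)=0$, and the regular branch is positive and increasing, so this fails. I would first try to relax this by enlarging the test class, that is, by showing that the closure of the smooth fields is contained in a proper closed subspace characterised by a continuous functional that is sensitive to the degenerate side. A natural candidate is a weighted normal trace at $r=0$, for instance $\bf{u}\mapsto\lim_{r\ra0}\int_\mbT r^{-\gamma}u_2\df\theta$ for a suitable $\gamma\in(0,(\al-1)/2]$. I would then check its continuity on $H(\Div,\Om;w^{-1})$ via Lemma \ref{01.04.L1}. If that functional turns out to vanish identically, since $\int_0^1 r^{-\al}\|u_2(\cdot,r)\|^2_{L^2(\mbT)}\df r<\iy$ forces small averages, I would instead test the cut-off approximation $\chi_\e(r)\bf{u}$. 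The candidate is $\bf{u}=(0,r^\be)$ with $\be\in\big((\al-1)/2,\,\al/2+\tfrac12\big)$: it belongs to the space, but $\|\chi_\e'u_2\|_{L^2}$ stays bounded away from zero. I would then try to turn this failure into a genuine lower bound on the distance from $\bf{u}$ to all smooth fields. I expect this last step to be the crux and would treat the existence of the separating element as the key point to verify.
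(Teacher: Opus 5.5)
Your density argument in $H(\Div,\Om)$ (partition of unity in $r$, translation away from each boundary component, periodic mollification in $\theta$) is the standard proof and is fine; the paper simply cites it as classical.

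The second half has a genuine gap. You read the claim as saying that $[C^\infty(\ol\Om)]^2\cap H(\Div,\Om;w^{-1})$ is not dense, and that statement is false. Lemma~\ref{02.11.L1} of the paper proves that this intersection \emph{is} dense, by exactly the duality computation you start:
\begin{itemize}
\item if $\mathbf h$ is orthogonal to the intersection, testing with $[C_0^\infty(\Om)]^2$ gives $\mathbf h=A\nabla g$ with $g=\Div\mathbf h\in H^1(\Om;w)$;
\item testing with smooth fields vanishing near $r=0$ gives $g=0$ on $\Ga$;
\item then $\|\mathbf h\|^2_{H(\Div,\Om;w^{-1})}=\mcT_w(\mathbf h,g)=0$ by Theorem~\ref{01.22.T1}.
\end{itemize}
So the fact that $q(1)\neq 0$ for the regular branch of your ODE (the singular branch has infinite energy since $\al\ge1$) is not a technical obstacle. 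It is the precise reason no separating element exists.

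Your fallbacks also fail, for concrete reasons.
\begin{itemize}
\item The weighted trace. For $\mathbf u\in H(\Div,\Om;w^{-1})$, the mean $m(r)=\int_{\mbT}u_2(\theta,r)\,d\theta$ has $m'=\int_{\mbT}\Div\mathbf u\,d\theta\in L^2(0,1)$. It also has $m(0)=0$, because $\int_0^1 r^{-\al}m^2\,dr<\infty$ and $\al\ge 1$. Hence $|m(r)|\le r^{1/2}\|m'\|_{L^2(0,1)}$, and your weighted trace vanishes identically for every $\gamma<1/2$, which covers your whole range of $\gamma$.
\item The candidate $(0,r^\beta)$. Its divergence $\beta r^{\beta-1}$ is in $L^2$ only when $\beta>1/2$, so for $\beta\le 1/2$ the field is not in the space at all. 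For $\beta>1/2$ one has $\|\chi_\varepsilon' r^\beta\|_{L^2}^2\sim\varepsilon^{2\beta-1}\to 0$, and the cut-offs $\chi_\varepsilon(r)(0,r^\beta)$, which are smooth on $\ol\Om$ and lie in the space, converge to it.
\end{itemize}

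What the statement actually asserts, and what the paper proves, is elementary: $[C^\infty(\ol\Om)]^2$ is not even contained in $H(\Div,\Om;w^{-1})$, so it cannot be a dense subset of it. You already recorded the decisive fact: a smooth field has finite weighted norm only if $v_2(\cdot,0)=0$. It therefore suffices to exhibit one smooth field violating this, e.g.\ $\mathbf u=(1,1)$, for which $\int_\Om \mathbf u\cdot A^{-1}\mathbf u\,dz=\int_\Om(1+r^{-\al})\,dz=+\infty$ since $\al\ge 1$.
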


\begin{proof}
	It is obvious that the function $\bf{u}=(1,1)\in [C^\iy(\ol\Om)]^2$, but
	\begin{equation*}
		\|\bf{u}\|_{H(\Div,\Om;w^{-1})}=\int_\Om \left(1+r^{-\al}\right)\df z=+\iy 
	\end{equation*}
	since $\al\in [1,2)$. 
	
	We now prove that the space $[C^\infty(\overline{\Omega})]^2$ is dense in $H(\operatorname{div},\Omega)$. This result is classical and can be found, for instance, in \cite[Theorem 1, Section 2, Chapter IX, Part A]{Dautray} or \cite[Chapter 20]{Tartar}. 
\end{proof}

\begin{definition}\label{01.22.D1}
	We define
	\begin{equation*}
		\ga_\nu:  H(\Div,\Om)\ra H^{-\f{1}{2}}(\Ga^*)
	\end{equation*}
	by 
	\begin{equation*}
		\ga_\nu\bf{u}=\bf{u}\cdot\nu=-u_2 \mbox{ for all } \bf{u}=(u_1,u_2)\in [C^\iy(\ol\Om)]^2, 
	\end{equation*}
	where $H^{-\f{1}{2}}(\Ga^*)$ is the dual space of $H^\f{1}{2}(\Ga^*)$ with pivot space $L^2(\Ga^*)$. 
\end{definition}

\begin{remark}\label{02.09.R1}
	We note that the normal trace operator
	\begin{equation*}
		\gamma_\nu : H(\operatorname{div},\Omega) \to H^{-\f{1}{2}}(\Gamma)
	\end{equation*} 
	defined by
	\begin{equation*}
		\gamma_\nu \mathbf{u} = \mathbf{u} \cdot \nu = u_2 \quad \text{for all } \mathbf{u} = (u_1,u_2) \in [C^\infty(\overline{\Omega})]^2
	\end{equation*}
	is also well defined. Clearly, this case is the same as that in Definition \ref{01.22.D1}.
\end{remark}

\begin{lemma}\label{01.22.L3}
	The operator $\ga_\nu$ is a bounded linear operator. 
\end{lemma}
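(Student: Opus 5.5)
The plan is the classical argument for normal traces in $H(\Div,\Om)$: use Green's formula with a lifting of boundary data, then extend by density. The statement to prove is that the map $\ga_\nu\bf{u}=-u_2|_{\Ga^*}$, defined on $[C^\iy(\ol\Om)]^2$, satisfies
\begin{equation*}
	\|\ga_\nu\bf{u}\|_{H^{-\f{1}{2}}(\Ga^*)}\leq C\|\bf{u}\|_{H(\Div,\Om)}
\end{equation*}
for all smooth $\bf{u}$. Once this holds, Lemma \ref{01.22.L2} (density of $[C^\iy(\ol\Om)]^2$ in $H(\Div,\Om)$) extends $\ga_\nu$ uniquely to a bounded linear operator on all of $H(\Div,\Om)$. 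Linearity is clear from the definition.

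\textbf{Step 1: a lifting operator.} Since $\Ga^*=\mbT\ts\{0\}$ and $\Ga=\mbT\ts\{1\}$ are disjoint boundary components, I will construct a bounded lifting $R\colon H^{\f12}(\Ga^*)\ra H^1(\Om)$ with $(Rg)|_{\Ga^*}=g$ and $Rg$ vanishing near $\Ga$. In Fourier terms on $\mbT$ (Remark \ref{01.14.R-1}(2)), write $g=\sum_n \hat g_n e_n$. I would take
\begin{equation*}
	Rg(\theta,r)=\chi(r)\sum_n \hat g_n e^{-\langle n\rangle r}e_n(\theta),
\end{equation*}
where $\langle n\rangle=(1+n^2)^{1/2}$ and $\chi$ is a smooth cutoff equal to $1$ near $r=0$ and to $0$ near $r=1$. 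A direct computation gives
\begin{equation*}
	\|Rg\|_{H^1(\Om)}^2\leq C\sum_n\langle n\rangle|\hat g_n|^2=C\|g\|_{H^{\f12}(\mbT)}^2 .
\end{equation*}
Alternatively, I could simply invoke the standard trace theorem, which says the trace map $H^1(\Om)\ra H^{\f12}(\Ga^*)$ is onto with a bounded right inverse, and then multiply by $\chi(r)$.

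\textbf{Step 2: Green's formula for smooth fields.} For $\bf{u}\in[C^\iy(\ol\Om)]^2$ and $v\in H^1(\Om)$ vanishing near $\Ga$, the divergence theorem on $\Om$ gives
\begin{equation*}
	\int_\Om(\Div\bf{u})\,v\df z+\int_\Om\bf{u}\cdot\nabla v\df z=\int_{\Ga^*}(\bf{u}\cdot\nu)\,v\df S .
\end{equation*}
There are no lateral boundary contributions because $\theta$ is periodic, and the $\Ga$ term vanishes because $v$ does. The outward normal on $\Ga^*$ is $\nu=(0,-1)$, so $\bf{u}\cdot\nu=-u_2$, consistent with Definition \ref{01.22.D1}. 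The identity holds for smooth $v$ first and then extends to $H^1$ functions by density.

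\textbf{Step 3: the estimate.} Apply Step 2 with $v=Rg$ for $g\in H^{\f12}(\Ga^*)$. By Cauchy--Schwarz,
\begin{equation*}
	|\langle\ga_\nu\bf{u},g\rangle|\leq \|\bf{u}\|_{H(\Div,\Om)}\|Rg\|_{H^1(\Om)}\leq C\|\bf{u}\|_{H(\Div,\Om)}\|g\|_{H^{\f12}(\Ga^*)} .
\end{equation*}
Taking the supremum over $g$ gives the desired bound.

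\textbf{Step 4: extension by density.} Extend $\ga_\nu$ to $H(\Div,\Om)$ by Lemma \ref{01.22.L2}. The limit does not depend on the approximating sequence because of the uniform bound from Step 3. The same argument, with the cutoff placed near $\Ga$ instead, gives the version in Remark \ref{02.09.R1}.

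The only step requiring care is the lifting in Step 1. Its two essential features are that the lifting lives in the unweighted $H^1(\Om)$, so no degeneracy of $w$ enters, and that it vanishes near the other boundary component. Both are routine here because of the product structure of $\Om$.
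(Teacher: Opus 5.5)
Your proof is correct and follows essentially the same route as the paper. Both arguments apply Green's formula for smooth fields against an extension of the boundary datum that vanishes on $\Ga$, use Cauchy--Schwarz to bound the pairing by $\|\bf{u}\|_{H(\Div,\Om)}$ times the $H^1$ norm of the extension, and then extend by density using Lemma~\ref{01.22.L2}. The only difference is cosmetic: the paper takes the infimum over all extensions in $H_\Ga^1(\Om)$ and identifies it with $\|v\|_{H^{\f12}(\Ga^*)}$, whereas your explicit Fourier lifting makes the comparison with the norm of Remark~\ref{01.14.R-1} transparent.
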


\begin{proof}
	For each $\bf{u}\in [C^\iy(\ol\Om)]^2$, we have
	\begin{equation*}
		\begin{split}
			\lg \ga_\nu \bf{u}, v\rg_{H^{-\f{1}{2}}(\Ga^*), H^\f{1}{2}(\Ga^*)}
			&=\int_{\Ga^*}(\bf{u}\cdot\nu)v\df S=\int_\Om \bf{u}\cdot \nabla \wt v\df z+\int_\Om \wt v\Div\bf{u}\df z\\
			&\leq 3\|\bf{u}\|_{H^1(\Div,\Om)}\|\wt v\|_{H_\Ga^1(\Om)}
		\end{split}
	\end{equation*}
	for all $v\in H^\f{1}{2}(\Ga^*)$, where  $\wt v\in H_\Ga^1(\Om)$ is an extension of $v$ (i.e., $\wt v=v$ on $\Ga^*$, the definition of $H_\Ga^1(\Om)$ is defined in \eqref{01.14.1}), then 
	\begin{equation*}
		\begin{split} 
		\left|\lg\ga_\nu\bf{u}, v\rg_{H^{-\f{1}{2}}(\Ga^*), H^\f{1}{2}(\Ga^*)}\right| 
		&\leq 3\|\bf{u}\|_{H^1(\Div,\Om)}\inf\{\|\wt v\|_{H_\Ga^1(\Om)}\colon \wt v\mbox{ is an extension of }v\}\\
		&\equiv 3\|\bf{u}\|_{H^1(\Div,\Om)}\|v\|_{H^\f{1}{2}(\Ga^*)}. 
		\end{split} 
	\end{equation*}
	Hence 
	\begin{equation*}
		\|\ga_\nu\bf{u}\|_{H^{-\f{1}{2}}(\Ga^*)}\leq 3\|\bf{u}\|_{H(\Div,\Om)}. 
	\end{equation*} 
	
	Now, from Lemma \ref{01.22.L2}, define
	\begin{equation*}
		\ga_\nu \bf{u}=\lim_{n\ra\iy} \ga_\nu\bf{u}_n
	\end{equation*}
	for  $[C^\iy(\ol\Om)]^2\ni\bf{u}_n\ra \bf{u}$ strongly in $H(\Div,\Om)$, 
	then 
	\begin{equation*}
		\begin{split}
			\|\ga_\nu\bf{u}\|_{H^{-\f{1}{2}}(\Ga^*)}\leq 3\|\bf{u}\|_{H^1(\Div,\Om)}, \mbox{ for all } \bf{u}\in H(\Div,\Om). 
		\end{split}
	\end{equation*}
	Since $\ga_\nu$ is obviously a linear operator, we complete the proof of this lemma. 
\end{proof}

\begin{lemma}\label{01.22.L4}
	For all $\bf{u}\in H(\Div,\Om)$ and all $v\in H_\Ga^1(\Om)$, we have
	\begin{equation*}
		\int_\Om \left([\Div\bf{u}]v+\bf{u}\cdot\nabla v\right)\df z=\lg \ga_\nu \bf{u}, v\rg_{H^{-\f{1}{2}}(\Ga^*), H^\f{1}{2}(\Ga^*)}. 
	\end{equation*}
\end{lemma}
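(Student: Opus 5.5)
The identity is a density argument built on the smooth case. I will first prove it for smooth fields and test functions, and then pass to the limit using the density result of Lemma \ref{01.22.L2} and the continuity of $\ga_\nu$ from Lemma \ref{01.22.L3}.

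\textbf{Step 1 (smooth case).} Take $\bf{u}\in[C^\iy(\ol\Om)]^2$ and $v\in C_\Ga^\iy(\Om)$. Since $\Div(v\bf{u})=[\Div\bf{u}]v+\bf{u}\cdot\nabla v$, I integrate over $\Om=\mbT\ts(0,1)$ in the coordinates $(\theta,r)$. The $\theta$-derivative term $\pt_\theta(vu_1)$ integrates to zero by periodicity; here Remark \ref{01.14.R-1}(2) is used to treat $\theta\in[0,2\pi)$ periodically. The $r$-derivative term gives boundary contributions
\[
\int_\mbT (vu_2)(\theta,1)\,\df\theta-\int_\mbT (vu_2)(\theta,0)\,\df\theta .
\]
The first vanishes because $v$ is supported away from $\Ga$. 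The second equals $\int_{\Ga^*}(\bf{u}\cdot\nu)v\,\df S$, since $\nu=(0,-1)$ on $\Ga^*$, in agreement with Definition \ref{01.22.D1}. In the smooth case the duality pairing is just this $L^2(\Ga^*)$ integral, so the identity holds.

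\textbf{Step 2 (density in $v$).} Fix smooth $\bf{u}$ and take $v\in H_\Ga^1(\Om)$. Choose $v_n\in C_\Ga^\iy(\Om)$ with $v_n\to v$ in $H^1(\Om)$, which is possible by the definition \eqref{01.14.1}. The left-hand side converges because $\Div\bf{u},\bf{u}\in L^2$. For the right-hand side, the classical trace map $H^1(\Om)\to H^{1/2}(\Ga^*)$ is continuous, so $v_n|_{\Ga^*}\to v|_{\Ga^*}$ in $H^{1/2}(\Ga^*)$. This is consistent with the quotient norm used in Lemma \ref{01.22.L3}, because $\|v|_{\Ga^*}\|_{H^{1/2}(\Ga^*)}\le\|v\|_{H_\Ga^1(\Om)}$ by definition of that infimum norm. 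Hence the pairing converges.

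\textbf{Step 3 (density in $\bf{u}$).} For general $\bf{u}\in H(\Div,\Om)$, Lemma \ref{01.22.L2} gives $\bf{u}_n\in[C^\iy(\ol\Om)]^2$ with $\bf{u}_n\to\bf{u}$ in $H(\Div,\Om)$. The left-hand side converges by Cauchy--Schwarz. On the right-hand side, $\ga_\nu\bf{u}_n\to\ga_\nu\bf{u}$ in $H^{-1/2}(\Ga^*)$ by Lemma \ref{01.22.L3}, which is in fact how $\ga_\nu\bf{u}$ was defined. The pairing with the fixed trace of $v$ therefore converges.

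The main point needing care is the consistency of the $H^{1/2}(\Ga^*)$ norm with the restriction of $H_\Ga^1(\Om)$ functions. The issue is that the test space has a Dirichlet condition on $\Ga$, while the trace is taken on $\Ga^*$. I would resolve it by noting that the norm in Lemma \ref{01.22.L3} is exactly the infimum over extensions in $H_\Ga^1(\Om)$. The trace of $v$ is then trivially bounded by $\|v\|_{H_\Ga^1(\Om)}$, and no separate extension theorem is needed.
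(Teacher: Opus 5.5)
Your proposal is correct and follows the paper's proof: approximate $\bf{u}$ by smooth fields via Lemma \ref{01.22.L2}, use the divergence theorem for smooth fields, and pass to the limit using the continuity of $\ga_\nu$ from Lemma \ref{01.22.L3}. Your Steps 1--2 make explicit the smooth-field identity for test functions $v\in H_\Ga^1(\Om)$, which the paper uses without comment (also in the proof of Lemma \ref{01.22.L3}).
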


\begin{proof}
	Let $\{\bf{u}_n\}_{n\in\N^*}\s [C^\iy(\ol\Om)]^2$ such that $\bf{u}_n\ra \bf{u}$ strongly in $H(\Div,\Om)$. Then 
	\begin{equation*}
		\begin{split}
			\lg\ga_\nu \bf{u},v\rg_{H^{-\f{1}{2}}(\Ga^*), H^\f{1}{2}(\Ga^*)}
			&=\lim_{n\ra\iy}\lg\ga_\nu\bf{u}_n, v\rg_{H^{-\f{1}{2}}(\Ga^*), H^\f{1}{2}(\Ga^*)}=\lim_{n\ra\iy}\int_{\Ga^*}(\ga_\nu\bf{u}_n) v\df S\\
			&=\lim_{n\ra\iy}\int_\Om \left([\Div\bf{u}_n]v+\bf{u}_n\cdot \nabla v\right)\df z=\int_\Om \left([\Div \bf{u}]v+\bf{u}\cdot\nabla v\right)\df z
		\end{split}
	\end{equation*}
		for each $v\in H_\Ga^1(\Om)$. This completes the proof of the lemma. 
\end{proof}

Now, for each $\de\in [0,\f{1}{8}]$, as in Definition \ref{01.22.D1}, we can define 
\begin{equation*}
	\ga_\nu: H(\Div,\Om)\ra H^{-\f{1}{2}}(\Ga_\de^*). 
\end{equation*}  Then, from Lemma \ref{01.22.L4}, we have
\begin{equation*}
	\begin{split}
		&\lg\ga_\nu \bf{u}, v\rg_{H^{-\f{1}{2}}(\Ga^*), H^\f{1}{2}(\Ga^*)}-\lg \ga_\nu \bf{u}, v\rg_{H^{-\f{1}{2}}(\Ga_\de^*), H^\f{1}{2}(\Ga_\de^*)}=\int_{\Om-\Om_\de}\left([\Div\bf{u}]v+\bf{u}\cdot\nabla v\right)\df z
	\end{split}
\end{equation*}
for all $\bf{u}\in H(\Div,\Om)$ and $v\in H_\Ga^1(\Om)$. Hence
\begin{equation*}
	\lim_{\de\ra 0^+}\lg \ga_\nu \bf{u}, v\rg_{H^{-\f{1}{2}}(\Ga_\de^*), H^\f{1}{2}(\Ga_\de^*)}=\lg\ga_\nu \bf{u}, v\rg_{H^{-\f{1}{2}}(\Ga^*), H^\f{1}{2}(\Ga^*)}
\end{equation*}
for all $\bf{u}\in H(\Div,\Om)$ and $v\in H_\Ga^1(\Om)$.

\begin{lemma}\label{01.22.L5}
	For every $\bf{u}\in H(\Div,\Om)$ and every $v\in H_\Ga^1(\Om)$, the function
	\begin{equation*}
		f(\de;v)=\int_{\Ga_\de^*}v^2\df S,\quad g(\de; \bf{u},v)=\lg \ga_\nu\bf{u}, v\rg_{H^{-\f{1}{2}}(\Ga_\de^*), H^\f{1}{2}(\Ga_\de^*)}
	\end{equation*}
	with respect to $\de\in [0,\f{1}{8}]$ are  continuous. 
\end{lemma}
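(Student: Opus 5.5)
For $f(\de;v)$ I would use the fundamental theorem of calculus in $r$. Since $\Ga_\de^*=\mbT\ts\{\de\}$ and $\de\le \f18$ stays inside the region where $v\in H^1$, first take $v\in C_\Ga^\iy(\Om)$. For $0\le\de_1<\de_2\le\f18$,
\begin{equation*}
f(\de_2;v)-f(\de_1;v)=\int_\mbT\int_{\de_1}^{\de_2}2v\,\pt_rv\df r\df\theta=\int_{\{\de_1<r<\de_2\}}2v\,\pt_rv\df z ,
\end{equation*}
so $|f(\de_2;v)-f(\de_1;v)|\le 2\|v\|_{L^2(\{\de_1<r<\de_2\})}\|\pt_rv\|_{L^2(\{\de_1<r<\de_2\})}$. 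This identity extends to all $v\in H_\Ga^1(\Om)$ by density. The extension uses the trace inequality $\int_{\Ga_\de^*}v^2\df S\le C\|v\|_{H^1(\Om)}^2$, with $C$ uniform in $\de\in[0,\f18]$; this itself follows from the same identity integrated over $\de_1\in[\f14,\f12]$. The right-hand side tends to $0$ as $\de_2-\de_1\to0$ by absolute continuity of the Lebesgue integral, which gives continuity (indeed uniform continuity) of $f(\cdot;v)$.

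For $g(\de;\bf{u},v)$ I would use the identity stated just before the lemma, which follows from Lemma \ref{01.22.L4} applied on $\Om$ and on $\Om_\de=\{\de<r<1\}$:
\begin{equation*}
g(0;\bf{u},v)-g(\de;\bf{u},v)=\int_{\{0<r<\de\}}\left([\Div\bf{u}]v+\bf{u}\cdot\nabla v\right)\df z .
\end{equation*}
Applying the same argument on $\Om_{\de_1}$ in place of $\Om$ gives, for $\de_1<\de_2$,
\begin{equation*}
g(\de_1)-g(\de_2)=\int_{\{\de_1<r<\de_2\}}\left([\Div\bf{u}]v+\bf{u}\cdot\nabla v\right)\df z .
\end{equation*}
The integrand lies in $L^1(\Om)$ by Cauchy--Schwarz, since $\Div\bf{u},v\in L^2$ and $\bf{u},\nabla v\in[L^2]^2$. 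Hence the difference tends to $0$ as $|\de_2-\de_1|\to0$, again by absolute continuity of the integral.

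The main point requiring care is justifying the Green formula on $\Om_{\de_1}$. The domain $\Om_{\de_1}$ has two boundary pieces, $\Ga_{\de_1}^*$ and $\Ga$, and the boundary term on $\Ga$ vanishes because $v\in H_\Ga^1(\Om)$. I would prove it exactly as in Lemmas \ref{01.22.L3}--\ref{01.22.L4}: approximate $\bf{u}$ by $[C^\iy(\ol\Om)]^2$ fields (Lemma \ref{01.22.L2}), for which the formula is classical, and pass to the limit. One must also check that the sign convention for $\nu$ on $\Ga_\de^*$ (outward normal $-e_r$) matches Definition \ref{01.22.D1}; with the convention $\ga_\nu\bf{u}=-u_2$ the displayed identity holds.
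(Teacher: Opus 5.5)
Your proposal is correct and follows the paper's proof. For $f$ you use the fundamental theorem of calculus in $r$ over the strip $\{\de_1<r<\de_2\}$, and for $g$ you use the difference of Green's formulas over the same strip; in both cases the strip integral tends to $0$ as $|\de_2-\de_1|\to 0$. Your extra steps (the density extension with a uniform trace bound, Green's formula on $\Om_{\de_1}$, and the sign check for $\ga_\nu$) fill in details the paper leaves implicit.
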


\begin{proof}
	Let $v\in H_\Ga^1(\Om)$. Since for all $0\leq \de_1\leq \de_2\leq \f{1}{8}$ we have 
	\begin{equation*}
		\begin{split}
			|f(\de_1;v)-f(\de_2;v)|
			&=\left|\int_{\Ga_{\de_1}^*}v^2\df S-\int_{\Ga_{\de_2}^*}v^2\df S\right|=\left|\int_{\Om_{\de_1}-\Om_{\de_2}}\pt_r v^2\df z\right|\\
			&\leq \int_{\Om_{\de_1}-\Om_{\de_2}}\left(v^2+|\nabla v|^2\right)\df z=\|v\|_{H_\Ga^1(\Om_{\de_1}-\Om_{\de_2})}^2, 
		\end{split}
	\end{equation*}
	and then $f(\de;v)$ is a continuous function with respect to $\de\in [0,\f{1}{8}]$. 
	
	Let $\bf{u}\in H(\Div,\Om)$ and $v\in H_\Ga^1(\Om)$. Since for all $0\leq \de_1\leq \de_2\leq \f{1}{8}$ we have 
	\begin{equation*}
		\begin{split}
			|g(\de_1;\bf{u},v)-g(\de_2; \bf{u}, v)|
			&=\left|\lg \ga_\nu\bf{u},\nu\rg_{H^{-\f{1}{2}}(\Ga_{\de_1}^*), H^\f{1}{2}(\Ga_{\de_1}^*)}-\lg \ga_\nu\bf{u},\nu\rg_{H^{-\f{1}{2}}(\Ga_{\de_2}^*), H^\f{1}{2}(\Ga_{\de_2}^*)}\right|\\
			&=\int_{\Om_{\de_1}-\Om_{\de_2}}\left([\Div\bf{u}]v+\bf{u}\cdot\nabla v\right)\df z\\
			&\leq 3\|\bf{u}\|_{H^1(\Div,\Om_{\de_1}-\Om_{\de_2})}\|v\|_{H^1(\Om_{\de_1}-\Om_{\de_2})}, 
		\end{split}
	\end{equation*}
	and then $g(\de;\bf{u},v)$ is a continuous function with respect to $\de\in [0,\f{1}{8}]$. 
\end{proof}

Next we establish the Green formula for $\bf{u}\in H(\Div,\Om;w^{-1})$ and $v\in H_\Ga^1(\Om;w)$, namely,
\begin{equation*}
	\int_\Om \left([\Div\bf{u}]v+\bf{u}\cdot\nabla v\right)\df z=0. 
\end{equation*}
For this, we define
\begin{equation}\label{01.22.1}
	\mcT_w: H(\Div,\Om;w^{-1})\ts H_\Ga^1(\Om;w)\ra\R, \quad \mcT_w(\bf{u},v)= \int_\Om \left([\Div\bf{u}]v+\bf{u}\cdot\nabla v\right)\df z.
\end{equation} 

\begin{lemma}\label{01.22.L6}
	The operator $\mcT_w$ is a bounded bilinear functional. 
\end{lemma}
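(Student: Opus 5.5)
Bilinearity of $\mcT_w$ is immediate from linearity of $\Div$, of the gradient and of the integral, so the plan is really about boundedness: we aim for
\begin{equation*}
	|\mcT_w(\bf{u},v)|\leq C\|\bf{u}\|_{H(\Div,\Om;w^{-1})}\|v\|_{H_\Ga^1(\Om;w)}
\end{equation*}
for all $\bf{u}\in H(\Div,\Om;w^{-1})$ and $v\in H_\Ga^1(\Om;w)$. We treat the two terms in the definition \eqref{01.22.1} of $\mcT_w$ separately, each by the Cauchy--Schwarz inequality.

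For the divergence term we write
\begin{equation*}
	\left|\int_\Om [\Div\bf{u}]v\df z\right|\leq \|\Div\bf{u}\|_{L^2(\Om)}\|v\|_{L^2(\Om)}\leq C\|\Div\bf{u}\|_{L^2(\Om)}\|v\|_{H_\Ga^1(\Om;w)},
\end{equation*}
where the second step is the Poincar\'e inequality of Remark \ref{01.21.R1}. It holds for all $\al\in[1,2)$, including the case $\al=1$ through Lemma \ref{01.04.L1}(2).

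For the flux term we insert the weight. Since $A$ is symmetric positive definite on $\Om$, we have $\bf{u}\cdot\nabla v=(A^{-\f12}\bf{u})\cdot(A^{\f12}\nabla v)$ pointwise, and hence
\begin{equation*}
	\left|\int_\Om \bf{u}\cdot\nabla v\df z\right|\leq \left(\int_\Om \bf{u}\cdot A^{-1}\bf{u}\df z\right)^{\f12}\left(\int_\Om \nabla v\cdot A\nabla v\df z\right)^{\f12}.
\end{equation*}
Both factors are finite by the definitions of the two spaces. Adding the two bounds gives the claim with $C$ depending only on $\al$.

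The only delicate point is making sure both integrals in $\mcT_w$ are well defined. For $v\in H_\Ga^1(\Om;w)$ we only know $\nabla v$ as a distribution with $A^{\f12}\nabla v\in L^2(\Om)$, and $\nabla v$ itself need not lie in $L^2(\Om)$ near $\Ga^*$. The factorization above handles this: the product $\bf{u}\cdot\nabla v$ is in $L^1(\Om)$ precisely because $\bf{u}$ carries the compensating weight $A^{-1}$. This is exactly why the space $H(\Div,\Om;w^{-1})$ rather than $H(\Div,\Om)$ is paired with $H_\Ga^1(\Om;w)$. Any constant is harmless here, because the lemma is only a preparation for the density argument leading to the Green formula $\mcT_w(\bf{u},v)=0$.
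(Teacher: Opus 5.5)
Your proof is correct and follows the paper's argument. It applies Cauchy--Schwarz to the divergence term, and the weighted splitting $\mathbf{u}\cdot\nabla v=(A^{-1/2}\mathbf{u})\cdot(A^{1/2}\nabla v)$ followed by Cauchy--Schwarz to the flux term. Your appeal to the Poincar\'e inequality of Remark~\ref{01.21.R1} to bound $\|v\|_{L^2(\Om)}$ correctly handles the norm convention adopted there; the paper simply records the constant as $3$.
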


\begin{proof}
		Clearly, $\mcT_w$ is a bilinear functional. 
	
	Now, for each $\bf{u}\in H(\Div,\Om;w^{-1})$ and every $v\in H_\Ga^1(\Om;w)$, we have
	\begin{equation*}
		\begin{split}
			\left|\mcT_w(\bf{u},v)\right|
			&=\left|\int_\Om \left([\Div\bf{u}]v+\bf{u}\cdot\nabla v\right)\df z\right|\\
			&\leq \left(\int_\Om [\Div\bf{u}]^2\df z\right)^\f{1}{2}\left(\int_\Om v^2\df z\right)^\f{1}{2}+\left(\int_\Om \bf{u}\cdot A^{-1}\bf{u}\df z\right)^\f{1}{2}\left(\int_\Om\nabla v\cdot A\nabla v\df z\right)^\f{1}{2}\\
			&\leq 3\|\bf{u}\|_{H(\Div,\Om;w^{-1})}\|v\|_{H_\Ga^1(\Om;w)}, 
		\end{split}
	\end{equation*}
	hence, $\mcT_w$ is a bounded bilinear functional. This completes the proof of this lemma. 
\end{proof}

\begin{theorem}\label{01.22.T1}
	For each $\bf{u}\in H(\Div,\Om;w^{-1})$, we have
	\begin{equation*}
		\mcT_w(\bf{u},v)=0, \mbox{ for all } v\in H_\Ga^1(\Om;w). 
	\end{equation*}
\end{theorem}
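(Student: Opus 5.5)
By Lemma \ref{01.22.L6}, $\mcT_w(\bf{u},\cdot)$ is continuous on $H_\Ga^1(\Om;w)$ for fixed $\bf{u}\in H(\Div,\Om;w^{-1})$. Since $H_\Ga^1(\Om;w)$ is by definition the closure of $C_\Ga^\iy(\Om)$, it suffices to prove $\mcT_w(\bf{u},v)=0$ for every $v\in C_\Ga^\iy(\Om)$. Such a $v$ is smooth up to $\Ga^*$ and vanishes near $\Ga$, so $v\in H_\Ga^1(\Om)$. Moreover $H(\Div,\Om;w^{-1})\hra H(\Div,\Om)$ continuously, because $A^{-1}\geq I$. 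Hence Lemma \ref{01.22.L4} applies and gives
\begin{equation*}
	\mcT_w(\bf{u},v)=\lg \ga_\nu\bf{u}, v\rg_{H^{-\f{1}{2}}(\Ga^*), H^\f{1}{2}(\Ga^*)}.
\end{equation*}
The task is therefore to show that the normal trace of $\bf{u}$ on the degenerate boundary $\Ga^*$ vanishes when tested against smooth $v$. This is the weak form of the Neumann condition $r^\al\pt_r\vp=0$ at $r=0$.

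To prove this I will use the $\de$-approximation $\Om_\de=\{\de<r<1\}$, as in the discussion before Lemma \ref{01.22.L5}, together with an averaging argument.

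\textbf{Step 1.} Apply Green's formula on $\Om_\de$ and subtract it from the identity on $\Om$. This gives
\begin{equation*}
	\mcT_w(\bf{u},v)=\lg\ga_\nu\bf{u},v\rg_{\Ga_\de^*}+\int_{\Om\setminus\Om_\de}\left([\Div\bf{u}]v+\bf{u}\cdot\nabla v\right)\df z .
\end{equation*}
The last integral tends to $0$ as $\de\to0$ by absolute continuity of the integral.

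\textbf{Step 2.} Show that $\lg\ga_\nu\bf{u},v\rg_{\Ga_\de^*}$ tends to $0$ along some sequence $\de_k\to0$. Pointwise in $\de$ this is not directly accessible, so I average instead. Introduce a cutoff $\eta_\e(r)$ with $\eta_\e=1$ on $r\le\e$, $\eta_\e=0$ on $r\ge2\e$, and $|\eta_\e'|\le 2/\e$. Then write
\begin{equation*}
	\mcT_w(\bf{u},v)=\mcT_w(\bf{u},(1-\eta_\e)v)+\mcT_w(\bf{u},\eta_\e v).
\end{equation*}
Since $(1-\eta_\e)v$ vanishes near both $\Ga$ and $\Ga^*$, it lies in $H_0^1(\Om)$, so the first term is $0$ by the definition of the distributional divergence. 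For the second term,
\begin{equation*}
	|\mcT_w(\bf{u},\eta_\e v)|\le \|\Div\bf{u}\|_{L^2(\{r<2\e\})}\|v\|_{L^2}+\int_{\{r<2\e\}}|\bf{u}|\,|\nabla v|\df z+\int_{\{\e<r<2\e\}}|u_2|\,|\eta_\e'|\,|v|\df z .
\end{equation*}
The first two terms tend to $0$ as $\e\to0$.

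\textbf{Step 3 (the key step).} The main obstacle is the third term, which contains the factor $1/\e$. Here I use the weighted bound $\int_\Om r^{-\al}u_2^2\df z<\iy$. By Cauchy--Schwarz,
\begin{equation*}
	\f{2}{\e}\int_{\{\e<r<2\e\}}|u_2||v|\df z\le \f{2}{\e}\|v\|_\iy\left(\int_{\{\e<r<2\e\}}r^{-\al}u_2^2\right)^{\f12}\left(\int_{\{\e<r<2\e\}}r^{\al}\df z\right)^{\f12}.
\end{equation*}
The last factor is at most $C\e^{(\al+1)/2}$, so the whole term is bounded by
\begin{equation*}
	C\e^{(\al-1)/2}\left(\int_{\{r<2\e\}}r^{-\al}u_2^2\right)^{\f12}.
\end{equation*}
For $\al\in[1,2)$ the power $\e^{(\al-1)/2}$ stays bounded, and the integral tends to $0$ by absolute continuity. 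Hence the term tends to $0$.

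This is exactly where the weight $w^{-1}$ and the condition $\al\geq1$ enter: they make the degenerate boundary invisible to the normal trace. For $\al<1$ the argument would fail, consistent with the non-density in Lemma \ref{01.22.L2}. Letting $\e\to0$ gives $\mcT_w(\bf{u},v)=0$ for all $v\in C_\Ga^\iy(\Om)$, and density extends this to all of $H_\Ga^1(\Om;w)$.
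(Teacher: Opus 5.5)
Your proof is correct, but it takes a different route from the paper's.

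The paper stays inside its normal-trace framework. For $v\in C_\Ga^\iy(\Om)$ it bounds $\int_{\de_1}^{\de_2}\de^{-1}g(\de;\bf{u},v)^2\,\df\de$ by $2\pi\|v\|_{L^\iy}^2\int_{\mbT\times(\de_1,\de_2)}r^{-1}u_2^2\,\df z$, first for smooth $\bf{u}$ and then by approximation. It uses $r^{-1}\le r^{-\al}$ to make $\int_0\de^{-1}g(\de;\bf{u},v)^2\,\df\de$ finite, and combines this with the continuity of $\de\mapsto g(\de;\bf{u},v)$ (Lemma \ref{01.22.L5}) to force $g(0;\bf{u},v)=0$. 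Lemma \ref{01.22.L4} then identifies $g(0;\bf{u},v)$ with $\mcT_w(\bf{u},v)$, and density finishes as in your last line.

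You bypass traces instead. The function $(1-\eta_\e)v$ is an honest test function for the distributional divergence, and the one dangerous term $\int_\Om u_2\eta_\e'v\,\df z$ is killed by Cauchy--Schwarz against the weight. The two arguments are closely related: Fubini and Green's formula on $\Om_\de$ give $\int_\Om u_2\eta_\e'v\,\df z=\int_\e^{2\e}(-\eta_\e'(\de))\,g(\de;\bf{u},v)\,\df\de$, an average of the level-set traces against a kernel of unit mass. The paper controls $g(\de)$ pointwise as $\de\to0$, while you only control these shell averages, which is all that is needed.

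What each route buys: yours is more economical, since it uses neither the density of smooth fields in $H(\Div,\Om)$, nor the trace operator of Lemma \ref{01.22.L3}, nor Lemma \ref{01.22.L5}, nor the approximation in $\bf{u}$ that the paper passes over quickly. The paper's version is phrased in the trace language used afterwards ($\ga_\nu\bf{u}=0$ in Theorem \ref{01.22.T2} and \eqref{02.11.7} in Lemma \ref{02.02.L1}). Your conclusion recovers this too, through Lemma \ref{01.22.L4}.

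Two small presentation points. Your opening reduction to $\lg\ga_\nu\bf{u},v\rg$ and your Step 1 are never used, since Steps 2--3 prove $\mcT_w(\bf{u},v)=0$ directly; the heading of Step 2 also does not describe what that step does. The failure for $\al<1$ is better explained by the field $(0,1)$ than by Lemma \ref{01.22.L2}, whose example concerns $\al\ge1$. That field lies in $H(\Div,\Om;w^{-1})$ exactly when $\al<1$ and gives $\mcT_w((0,1),v)=-\int_{\mbT}v(\theta,0)\,\df\theta$.
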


\begin{proof}
	Let $\bf{u}\in [C^\iy(\ol\Om)]^2$. Then for all $0<\de_1\leq \de_2\leq \f{1}{8}$, 
	\begin{equation*}
		\begin{split}
			\int_{\de_1}^{\de_2} \f{1}{\de}g(\de; \bf{u},v)^2\df \de
			&=\int_{\de_1}^{\de_2}\f{1}{\de}\lg \bf{u},v\rg_{H^{-\f{1}{2}}(\Ga_\de^*), H^\f{1}{2}(\Ga_\de^*)}^2\df \de =\int_{\de_1}^{\de_2} \f{1}{\de}\left(\int_{\Ga_\de^*}(\bf{u}\cdot\nu)v\df S\right)^2\df \de\\
			&\leq 2\pi\int_{\de_1}^{\de_2}\int_{\Ga_\de^*}\f{1}{\de}u_2^2v^2\df S\df \de \leq 2\pi\|v\|_{L^\iy(\Om)}^2\int_{\mbT\ts (\de_1,\de_2)}r^{-1}u_2^2\df z
		\end{split}
	\end{equation*}
	for all $v\in C_\Ga^\iy(\ol\Om)$. From Lemma \ref{01.22.L5} we get
	\begin{equation*}
		\int_{\de_1}^{\de_2}\f{1}{\de}g(\de;\bf{u},v)^2\df \de\leq 2\pi\|v\|_{L^\iy(\Om)}^2\int_{\mbT\ts (\de_1,\de_2)}r^{-1}u_2^2\df z\leq 2\pi\|v\|_{L^\iy(\Om)}^2\int_{\mbT\ts(\de_1,\de_2)}\bf{u}\cdot A^{-1}\bf{u}\df z
	\end{equation*}
	for all $\bf{u}\in H(\Div,\Om)$ and all $v\in C_\Ga^\iy(\ol\Om)$, since $r^{-1}=r^{\al-1}r^{-\al}\leq r^{-\al}$. Hence, for all $\bf{u}\in H(\Div,\Om;w^{-1})$ and all $v\in C_\Ga^\iy(\ol\Om)$, 
	\begin{equation*}
		\int_{\de_1}^{\de_2}\f{1}{\de}g(\de;\bf{u},v)^2\df \de\leq 2\pi\|v\|_{L^\iy(\Om)}^2\|\bf{u}\|_{H(\Div,\Om;w^{-1})}<+\iy, 
	\end{equation*}
	this implies that 
	\begin{equation}\label{02.11.7}
		g(0;\bf{u},v)=0
	\end{equation}
	for all $\bf{u}\in H(\Div,\Om;w^{-1})$ and all $v\in C_\Ga^\iy(\ol\Om)$. Hence, from Lemma \ref{01.22.L4}, for any $\bf{u}\in H(\Div,\Om;w^{-1})$ and $v\in C_\Ga^\iy(\ol\Om)$, 
	\begin{equation*}
		\mcT_w(\bf{u},v)=0.
	\end{equation*}
	Finally, from Lemma \ref{01.22.L6}, we get
	\begin{equation*}
		\mcT_w(\bf{u},v)=0, \mbox{ for all } \bf{u}\in H(\Div,\Om;w^{-1}), v\in H_\Ga^1(\Om;w). 
	\end{equation*}
	This completes the proof of this theorem. 
\end{proof}

\begin{corollary}\label{03.16.C1}
	For each $\bf{u}=(u_1,u_2)\in H(\Div,\Om;w^{-1})$, we have
	\begin{equation*}
		\mcT_w(\bf{u},v)=\int_{\Ga} v  u_2\df s, \mbox{ for all } v\in H^1(\Om;w). 
	\end{equation*}
\end{corollary}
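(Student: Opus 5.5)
The idea is to reduce Corollary \ref{03.16.C1} to Theorem \ref{01.22.T1} with a cut-off in the radial variable that separates the degenerate boundary $\Ga^*$ from the top boundary $\Ga$. Fix a smooth function $\eta=\eta(r)$ on $[0,1]$ with $\eta\equiv 1$ on $[0,\f{1}{4}]$ and $\eta\equiv 0$ on $[\f{1}{2},1]$. For $v\in H^1(\Om;w)$ we split $v=\eta v+(1-\eta)v$, and since $\mcT_w(\bf{u},\cdot)$ is linear we treat the two pieces separately.

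\emph{First piece.} We have $\eta v\in H_\Ga^1(\Om;w)$. To see this, approximate $v$ in $H^1(\Om;w)$ by smooth functions; this needs the density of $C^\iy(\ol\Om)$ in $H^1(\Om;w)$, which we obtain by the same mollification reasoning as in Remark \ref{02.11.R1}. Multiplying the approximants by $\eta$ gives elements of $C_\Ga^\iy(\Om)$, and multiplication by $\eta$ is bounded on $H^1(\Om;w)$ because $\nabla\eta$ is bounded. Theorem \ref{01.22.T1} then gives $\mcT_w(\bf{u},\eta v)=0$.

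\emph{Second piece.} The function $(1-\eta)v$ vanishes for $r<\f{1}{4}$. On $\{r>\f{1}{8}\}$ the weight $r^\al$ is bounded above and below, so there $H^1(\Om;w)=H^1$ and $H(\Div,\Om;w^{-1})=H(\Div,\Om)$ with equivalent norms. We apply the classical Green formula for $H(\Div)\times H^1$ on the nondegenerate region $\Om'=\mbT\times(\f{1}{8},1)$; this is the analogue of Lemma \ref{01.22.L4} with the roles of $\Ga$ and $\Ga^*$ exchanged, as in Remark \ref{02.09.R1}. The boundary of $\Om'$ consists of $\Ga$ and $\mbT\times\{\f{1}{8}\}$, and $(1-\eta)v$ vanishes near the inner circle. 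Hence
\begin{equation*}
\mcT_w(\bf{u},(1-\eta)v)=\lg \ga_\nu\bf{u},(1-\eta)v\rg_{H^{-\f{1}{2}}(\Ga),H^{\f{1}{2}}(\Ga)}=\lg \ga_\nu\bf{u},v\rg_{H^{-\f{1}{2}}(\Ga),H^{\f{1}{2}}(\Ga)},
\end{equation*}
because $\eta=0$ on $\Ga$ and the outward normal there is $\nu=(0,1)$, so that $\ga_\nu\bf{u}=u_2$ on $\Ga$. Adding the two pieces yields the identity $\mcT_w(\bf{u},v)=\int_\Ga v\,u_2\df s$.

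The main obstacle is interpretive. The normal trace $u_2|_\Ga$ lies a priori only in $H^{-\f{1}{2}}(\Ga)$, so the integral $\int_\Ga v u_2\df s$ has to be read as the duality pairing. It becomes a genuine integral when $u_2$ is more regular, for instance $\bf{u}=A\nabla\vp$ with $\vp\in H^2$ near $\Ga$. Beyond this, the only technical point to verify is the density of smooth functions in $H^1(\Om;w)$ needed in the first step. Near $\Ga^*$ we handle it by mollifying in $\theta$ and in $r$ after a small radial dilation $r\mapsto (r+\e)/(1+\e)$. This dilation keeps the weighted energy under control because $r^\al$ is increasing in $r$.
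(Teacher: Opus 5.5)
Your proof is correct and matches the paper's: a radial cut-off splits $v$ into a piece lying in $H_\Ga^1(\Om;w)$, which Theorem \ref{01.22.T1} kills, and a piece vanishing near $\Ga^*$, which the classical divergence theorem on the nondegenerate region $\mbT\times(\f{1}{8},1)$ handles. Your version also fills in three points the paper leaves implicit: you justify $\eta v\in H_\Ga^1(\Om;w)$ by density of smooth functions (the dilation argument works because $r^\al$ is increasing), you note that $(1-\eta)v$ vanishes only near the inner circle rather than lying in $H_0^1$, and you read $\int_\Ga v u_2\df s$ as an $H^{-\f{1}{2}}(\Ga)$--$H^{\f{1}{2}}(\Ga)$ duality pairing.
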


\begin{proof}
	Choosing $\zeta=\zeta(r)\in C^\iy(\Om), 0\leq \zeta \leq 1$ such that
	\begin{equation*}
		\zeta=1 \mbox{ on }\left(-\iy, \f{1}{2}\right), \quad \zeta=0 \mbox{ on }\left(\f{3}{4}, +\iy\right). 
	\end{equation*}
	Then $v^1=\zeta v\in H_\Ga^1(\Om;w)$ and $v^2=(1-\zeta)v\in H_0^1(\mbT\ts (\f{1}{4},1))$ and $v=v^1+v^2$. Then 
	\begin{equation*}
		\begin{split}
			\mcT_w(\bf{u},v)
			&=\mcT_w(\bf{u},v^1+v^2)=\int_\Om \left([\Div \bf{u}]v^1+\bf{u}\cdot \nabla v^1\right)\df z+\int_\Om \left([\Div \bf{u}]v^2+\bf{u}\cdot \nabla v^2\right)\df z\\
			&=\int_\Om \left([\Div \bf{u}]v^2+\bf{u}\cdot \nabla v^2\right)\df z=\int_\Om \Div(\bf{u}v^2)\df z=\int_{\Ga}v^2 \bf{u}\cdot \nu\df S=\int_{\Ga}v u_2\df S. 
		\end{split}
	\end{equation*}
		by Remark \ref{02.09.R1} and Theorem \ref{01.22.T1}. This completes the proof of the corollary. 
\end{proof}

\begin{lemma}\label{02.11.L1}
	The space $C^\iy(\ol\Om)\cap H(\Div,\Om;w^{-1})$ is dense in $H(\Div,\Om;w^{-1})$. 
\end{lemma}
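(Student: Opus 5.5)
To prove Lemma \ref{02.11.L1}, I would take $\bf{u}=(u_1,u_2)\in H(\Div,\Om;w^{-1})$ and approximate it in two stages: first by fields that vanish in a neighbourhood of $\Ga^*$, then by mollifying those fields. The weight $A^{-1}=\diag(1,r^{-\al})$ affects only the second component, so the only part of the norm that needs care is $\int_\Om r^{-\al}u_2^2\df z$ near $r=0$.

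\textbf{Step 1 (cut-off near $\Ga^*$).} Fix $\eta\in C^\iy(\R)$ with $0\le\eta\le1$, $\eta=0$ on $(-\iy,1]$ and $\eta=1$ on $[2,\iy)$. For $\e\in(0,\f18)$ set $\eta_\e(r)=\eta(r/\e)$ and $\bf{u}^\e=\eta_\e\bf{u}$. Then
\begin{equation*}
\Div\bf{u}^\e=\eta_\e\Div\bf{u}+\e^{-1}\eta'(r/\e)u_2 .
\end{equation*}
The terms $\eta_\e\bf{u}$ and $\eta_\e\Div\bf{u}$ converge to $\bf{u}$ and $\Div\bf{u}$ in the weighted norm by dominated convergence. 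The main obstacle is the extra term $\e^{-1}\eta'(r/\e)u_2$. It is supported in $\{\e<r<2\e\}$, and there $\e^{-2}\le 4r^{-2}$, so its $L^2$ norm squared is bounded by
\begin{equation*}
C\int_{\{\e<r<2\e\}}r^{-2}u_2^2\df z\le C\,(2\e)^{\al-2}\int_{\{\e<r<2\e\}}r^{-\al}u_2^2\df z .
\end{equation*}
Because $\al<2$, the factor $\e^{\al-2}$ blows up as $\e\to0$, so finiteness of $\int r^{-\al}u_2^2$ alone does not give smallness.

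\textbf{Getting around the obstacle.} First I would try a logarithmic cut-off in place of the linear one, namely $\eta_\e(r)=\eta(\log(1/r)/\log(1/\e))$ with a suitable profile, chosen so that $|\eta_\e'|\le C/(r|\log\e|)$ on its support $\{\e^2<r<\e\}$. Then the extra term is bounded by $|\log\e|^{-2}\int_{\{\e^2<r<\e\}}r^{-2}u_2^2\df z$. Bounding $r^{-2}u_2^2$ by $r^{-\al}u_2^2$ still leaves a leftover factor $r^{\al-2}$, so this alone may not close either.

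The better route uses the divergence. Since $\Div\bf{u}\in L^2$, for a.e.\ $r$ the $\theta$-average satisfies
\begin{equation*}
\f{d}{dr}\int_{\mbT}u_2\df\theta=\int_{\mbT}\Div\bf{u}\df\theta ,
\end{equation*}
and this averaged identity is the one-dimensional structure I would exploit. I would split $u_2$ into its $\theta$-average and the remainder, and handle the remainder by cutting off in a way compatible with $\pt_\theta u_1$. If this becomes too delicate, the fallback is to prove density of the smaller class of fields supported away from $\Ga^*$ by a Hahn--Banach duality argument. Suppose $(\bf{h},g)$, with $\bf{h}$ in the weighted $L^2$ space, annihilates every such field:
\begin{equation*}
\int_\Om \bf{h}\cdot A^{-1}\bf{v}\df z+\int_\Om g\,\Div\bf{v}\df z=0 .
\end{equation*}
This gives $A^{-1}\bf{h}=\nabla g$ away from $\Ga^*$, so $g\in H^1(\Om;w)$. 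Theorem \ref{01.22.T1} and Corollary \ref{03.16.C1} then give $\mcT_w$-type identities showing that $(\bf{h},g)$ also annihilates $\bf{u}$. This is the step I expect to be hardest, and I would attempt the duality version first because it uses only results already proved.

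\textbf{Step 2 (mollification).} Take $\bf{u}^\e$ supported in $\{r\ge\e\}$. Near $\Ga$ the weight is bounded above and below, so $\bf{u}^\e\in H(\Div,\Om)$ with support away from $\Ga^*$. By Lemma \ref{01.22.L2}, it can be approximated in $H(\Div,\Om)$ by smooth fields. Since the support of $\bf{u}^\e$ avoids $r=0$, I would use periodic mollification in $\theta$ and reflection-based mollification near $r=1$, keeping supports inside $\{r\ge\e/2\}$. On that set $r^{-\al}\le(\e/2)^{-\al}$, so $H(\Div,\Om)$-convergence implies $H(\Div,\Om;w^{-1})$-convergence. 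A diagonal argument over $\e$ and the mollification parameter then completes the proof.
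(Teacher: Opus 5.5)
Your fallback duality argument is the paper's proof. The paper takes $\bf{h}\in H(\Div,\Om;w^{-1})$ orthogonal to the smooth fields, sets $g=\Div\bf{h}$, obtains $A\nabla g=\bf{h}$ and $g\in H^1(\Om;w)$, and concludes with Theorem~\ref{01.22.T1}. So you can drop the inconclusive cut-off attempts of Step~1. If you run the duality directly against smooth fields vanishing near $\Ga^*$, which is all the argument uses, your mollification Step~2 also becomes unnecessary.

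The step you flag as hardest is short once you state that testing against smooth fields with nonzero normal component on $\Ga$ forces $g=0$ on $\Ga$. The paper gets this by extending $g$ by zero to $\mbT\ts(0,2)$; you can read it off Corollary~\ref{03.16.C1}. After that, the pairing with any $\bf{u}$ is $\mcT_w(\bf{u},g)=\int_\Ga g\,u_2\df S=0$.
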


\begin{proof} 
	It is easily verified that $[C_0^\iy(\Om)]^2\s H(\Div,\Om;w^{-1})$.   
	
	Let $\bf{h}\in H(\Div,\Om;w^{-1})$ be orthogonal to $[C^\iy(\ol\Om)]^2\cap H(\Div,\Om;w^{-1})$. i.e., 
	\begin{equation*}
		\int_\Om \bf{h} \cdot A^{-1}\bf{v}\df x+\int_\Om\left[\Div \bf{h}\right][\Div \bf{v}]\df x=0, \ \fa \bf{v}\in [C^\iy(\ol\Om)]^2\cap H(\Div,\Om;w^{-1}).
	\end{equation*} 
	Set $g=\Div \bf{h}$, then 
	\begin{equation}\label{02.02.02}
		\int_\Om g\Div \bf{v}\df z=-\int_\Om \bf{h}\cdot A^{-1}\bf{v}\df z, \ \fa \bf{v}\in [C^\iy(\ol\Om)]^2\cap H(\Div,\Om;w^{-1}). 
	\end{equation}
		Hence $\nabla g=A^{-1}\bf{h}$ in the sense of distributions on $\Om$ by $[C_0^\iy(\Om)]^2\s H(\Div,\Om;w^{-1})$. Therefore $A\nabla g=\bf{h}$, and $g\in H^1(\Om;w)$. Define
	\begin{equation*}
		\begin{split} 
			\wt g(z)
			&=
			\begin{cases}
				g(z), &z\in \Om,\\
				0, & z\in \mbT\ts (1,2), 
			\end{cases}\quad 
			\wt {\nabla  g}(z)
			=
			\begin{cases}
				A^{-1}(z)\bf{h}(z), &z\in \Om,\\
				0, &z\in \mbT\ts (1,2), 
			\end{cases}
		\end{split} 
	\end{equation*}
	then, for each $\psi\in [C_0^\iy(\mbT\ts (0,2))]^2$, we have $\psi\in H(\Div,\Om;w^{-1})$ by  $(\supp \psi)\cap \Ga^*=\es$, and hence
	\begin{equation*}
		\begin{split}
			\int_{\mbT\ts (0,2)} \wt{\nabla g}\cdot \psi \df z
			&=\int_\Om \bf{h} \cdot A^{-1}\psi\df z=-\int_\Om g \Div \psi\df z\\
			&=-\int_{\mbT\ts (0,2)}\wt g\Div\psi\df z=\int_{\mbT\ts (0,2)}\nabla \wt g\cdot \psi\df z, 
		\end{split}
	\end{equation*}
	where we used  \eqref{02.02.02} in the second equality. Hence $\wt{\nabla g}=\nabla \wt g$ on $\mbT\ts (0,2)$  and  $\wt g\in H^1(\mbT\ts (0,2);w^{-1})$. This shows that $\wt g|_{\Ga}=0$ by Remark \ref{01.22.R1}, and hence $g\in H_\Ga^1(\Om;w)$.

	Finally, for each $\wt{\bf{h}}\in H(\Div,\Om;w^{-1})$, we have 
	\begin{equation*}
		\begin{split}
			(\bf{h},\wt {\bf{h}})_{H(\Div,\Om;w^{-1})}
			&=\int_\Om \left(\bf{h}\cdot A^{-1}\wt{\bf{h}}+(\Div \bf{h})(\Div \wt {\bf{h}})\right)\df z\\
			&=\int_\Om \left(\wt {\bf{h}}\cdot \nabla g+(\Div \wt {\bf{h}})g\right)\df z=\mcT_w(\wt {\bf{h}}, g)=0
		\end{split}
	\end{equation*}
		by $g\in H_\Ga^1(\Om;w)$ and Theorem \ref{01.22.T1}. Hence $\bf{h}=0$. This implies that $C^\iy(\ol\Om)\cap H(\Div,\Om;w^{-1})$ is dense in $H(\Div, \Om;w^{-1})$. This completes the proof of the lemma. 
\end{proof}

\subsection{Neumann boundary condition}

The following theorem shows that the boundary condition $\f{\pt\vp}{\pt\nu_A}=0$ on $\Ga^*\ts(0,T)$ arises naturally in the equation \eqref{01.21.1}. This differs from the classical Neumann boundary condition for uniformly hyperbolic equations.

\begin{theorem}\label{01.22.T2}
	The following assertions hold true:
	
	(i) The following inclusion and equality hold
	\begin{equation*}
		H(\Div,\Om;w^{-1})\s \ker\ga_\nu=\left\{\bf{u}\in H(\Div,\Om)\colon  \ga_\nu\bf{u}=0\right\}. 
	\end{equation*}
	
	(ii) Suppose $u\in H^2(\Om;w)$. Then $\f{\pt u}{\pt\nu_A}=\ga_\nu(A\nabla u)=0$ on $\Ga^*$. Furthermore, for all $u\in H^2(\Om;w)$ and $v\in H_\Ga^1(\Om;w)$, we have
	\begin{equation*}
		-\int_\Om [\Div(A\nabla u)]v\df z=\int_\Om \nabla v\cdot A\nabla u\df z. 
	\end{equation*}
\end{theorem}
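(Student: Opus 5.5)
For (i), I will use that $H(\Div,\Om;w^{-1})\hra H(\Div,\Om)$ continuously, since $\bf{u}\cdot\bf{u}\le \bf{u}\cdot A^{-1}\bf{u}$ because $A^{-1}=\diag(1,r^{-\al})\ge I$. So $\ga_\nu\bf{u}$ is defined for every $\bf{u}\in H(\Div,\Om;w^{-1})$.

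To show $\ga_\nu\bf{u}=0$ in $H^{-\f12}(\Ga^*)$, I take an arbitrary $v\in H^{\f12}(\Ga^*)$ and an extension $\wt v\in H_\Ga^1(\Om)$. The inclusion $H_\Ga^1(\Om)\s H_\Ga^1(\Om;w)$ holds because $\nabla \wt v\cdot A\nabla\wt v\le|\nabla\wt v|^2$ and the defining closures of $C_\Ga^\iy(\Om)$ are nested. Lemma \ref{01.22.L4} then gives $\lg\ga_\nu\bf{u},v\rg=\mcT_w(\bf{u},\wt v)$, and Theorem \ref{01.22.T1} gives $\mcT_w(\bf{u},\wt v)=0$. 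Since $v$ was arbitrary, $\ga_\nu\bf{u}=0$. The identification of $\ker\ga_\nu$ with the displayed set is just the definition.

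For (ii), let $u\in H^2(\Om;w)$ and set $\bf{u}=A\nabla u$. Then
\begin{equation*}
\int_\Om\bf{u}\cdot A^{-1}\bf{u}\df z=\int_\Om\nabla u\cdot A\nabla u\df z<\iy,
\end{equation*}
which is finite because $u\in H^1(\Om;w)$. Also $\Div\bf{u}=\mcA u\in L^2(\Om)$, with $\mcA u=\Div(A\nabla u)$ understood distributionally. Hence $\bf{u}\in H(\Div,\Om;w^{-1})$. Part (i) then yields $\f{\pt u}{\pt\nu_A}=\ga_\nu(A\nabla u)=0$ on $\Ga^*$. For the Green identity, Theorem \ref{01.22.T1} applied to this $\bf{u}$ and any $v\in H_\Ga^1(\Om;w)$ gives $\int_\Om([\Div(A\nabla u)]v+A\nabla u\cdot\nabla v)\df z=0$, which rearranges to the claim by symmetry of $A$.

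The only delicate point is making sure $A\nabla u$ genuinely lies in $[L^2(\Om)]^2$, which $H(\Div,\Om;w^{-1})$ requires. I will check this directly: $|A\nabla u|^2=(\pt_\theta u)^2+r^{2\al}(\pt_ru)^2\le\nabla u\cdot A\nabla u$, since $r^{2\al}\le r^\al$ on $(0,1)$. So membership holds without further regularity.
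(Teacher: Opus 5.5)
Your proof is correct and follows the paper's argument. For (i) you combine Lemma~\ref{01.22.L4} with Theorem~\ref{01.22.T1}, testing against an arbitrary $v\in H^{1/2}(\Ga^*)$ through an extension in $H_\Ga^1(\Om)\subseteq H_\Ga^1(\Om;w)$ rather than against smooth $v$ followed by a density argument on $\Ga^*$, which is an equivalent variant. For (ii) you verify $A\nabla u\in H(\Div,\Om;w^{-1})$, explicitly including the $[L^2(\Om)]^2$ bound that the paper leaves implicit, and then apply (i) and Theorem~\ref{01.22.T1}; the only slip is the sign, since in the paper's convention $\Div(A\nabla u)=-\mcA u$, which does not affect membership.
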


\begin{proof}
	(i) Let $\bf{u}\in H(\Div,\Om;w^{-1})$. For all $v\in C_\Ga^\iy(\ol\Om)$, from Lemma \ref{01.22.L4} and Theorem \ref{01.22.T1},  we have
	\begin{equation*}
		\lg\ga_\nu \bf{u}, v\rg_{H^{-\f{1}{2}}(\Ga^*), H^\f{1}{2}(\Ga^*)}=\mcT_w(\bf{u},v)=0,
	\end{equation*}
	and then $\ga_\nu\bf{u}=0$ according to $C^\iy(\Ga^*)\cong C^\iy(\mbT)=C_0^\iy(\mbT)$ is dense in $H^\f{1}{2}(\Ga^*)=H^\f{1}{2}(\mbT)$. 
	
	(ii) Since $u\in H^2(\Om;w)$, we get $A\nabla u\in H(\Div,\Om;w^{-1})$. From (i) we obtain $\f{\pt u}{\pt \nu_A}=\ga_\nu(A\nabla u)=0$ on $\Ga^*$. 
	
	Finally, from $u\in H^2(\Om;w)$ and $v\in H_\Ga^1(\Om;w)$, we have $A\nabla u\in H(\Div,\Om;w^{-1})$, and 
	\begin{equation*}
		-\int_\Om [\Div(A\nabla u)]v\df z=\int_\Om \nabla v\cdot A\nabla u\df z
	\end{equation*}
	by Theorem \ref{01.22.T1}. This completes the proof of this theorem. 
\end{proof}

\begin{corollary}\label{03.01.C6}
	Let $u\in H^2(\Om;w)$ and $v\in H^1(\Om;w)$. Then 
	\begin{equation*}
		-\int_\Om [\Div(A\nabla u)]v\df z=\int_\Om \nabla v\cdot A\nabla u\df z+\int_{\Ga}\f{\pt u}{\pt\nu_A}v\df S, 
	\end{equation*}
	where $\f{\pt u}{\pt \nu_A}=A\nabla u\cdot \nu$ on $\Ga$. 
\end{corollary}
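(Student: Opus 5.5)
The plan is to reduce Corollary \ref{03.01.C6} to Theorem \ref{01.22.T2}(ii) and Corollary \ref{03.16.C1}, applied with the vector field $\mathbf{u}=A\nabla u$.

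First, since $u\in H^2(\Om;w)$, the field $\mathbf{u}=A\nabla u=(\pt_\theta u, r^\al\pt_r u)$ satisfies
\begin{equation*}
\int_\Om \mathbf{u}\cdot A^{-1}\mathbf{u}\,\df z=\int_\Om \nabla u\cdot A\nabla u\,\df z<+\iy
\end{equation*}
and $\Div\mathbf{u}=\Div(A\nabla u)\in L^2(\Om)$. Hence $\mathbf{u}\in H(\Div,\Om;w^{-1})$; this is exactly the observation made in the proof of Theorem \ref{01.22.T2}(ii).

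Second, I would apply Corollary \ref{03.16.C1} to this $\mathbf{u}$ and to an arbitrary $v\in H^1(\Om;w)$. It gives
\begin{equation*}
\int_\Om \left([\Div(A\nabla u)]v+A\nabla u\cdot\nabla v\right)\df z=\int_\Ga v\,u_2\,\df S,
\end{equation*}
where $u_2=r^\al\pt_r u$. On $\Ga=\{r=1\}$ the outward normal is $\nu=(0,1)$, so $u_2=A\nabla u\cdot\nu=\f{\pt u}{\pt\nu_A}$. Rearranging then yields the claimed identity. No contribution arises on $\Ga^*$, because Theorem \ref{01.22.T2}(i) shows $\ga_\nu(A\nabla u)=0$ there; this vanishing is already built into Corollary \ref{03.16.C1} through Theorem \ref{01.22.T1}.

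The main obstacle is giving the boundary term $\int_\Ga \f{\pt u}{\pt\nu_A}v\,\df S$ a rigorous meaning. For general $u\in H^2(\Om;w)$, the normal trace on $\Ga$ is a priori only in $H^{-\f12}(\Ga)$, by Remark \ref{02.09.R1} and Lemma \ref{01.22.L3}, and $v|_\Ga$ lies in $H^{\f12}(\Ga)$. Away from $\Ga^*$ the weight is bounded below, so $v\in H^1$ near $\Ga$. I would therefore interpret the integral as the duality pairing $\lg\ga_\nu(A\nabla u),v\rg_{H^{-\f12}(\Ga),H^{\f12}(\Ga)}$.

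To justify it, I would repeat the cut-off argument of Corollary \ref{03.16.C1}. Write $v=\zeta v+(1-\zeta)v$. The term with $\zeta v\in H_\Ga^1(\Om;w)$ vanishes by Theorem \ref{01.22.T1}. The term with $(1-\zeta)v$ is supported in $\mbT\ts(\f14,1]$, where the equation is uniformly elliptic, so Lemma \ref{01.22.L4} (in its version on $\Ga$) applies. Alternatively, one can first prove the identity for $u\in C^\iy(\ol\Om)$, using the density in Remark \ref{02.11.R1}, and then pass to the limit using the bound of Lemma \ref{01.22.L6}.
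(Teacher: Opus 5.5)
Your reduction is the paper's argument in compressed form: Corollary~\ref{03.16.C1} is proved by exactly the radial cut-off $v=\zeta v+(1-\zeta)v$ that the paper repeats here. However, the step ``rearranging then yields the claimed identity'' is wrong. With $\mathbf{u}=A\nabla u$, Corollary~\ref{03.16.C1} says
\[
\int_\Om [\Div(A\nabla u)]v\,\df z+\int_\Om A\nabla u\cdot\nabla v\,\df z=\int_\Ga \f{\pt u}{\pt\nu_A}v\,\df S,
\]
and rearranging gives
\[
-\int_\Om [\Div(A\nabla u)]v\,\df z=\int_\Om \nabla v\cdot A\nabla u\,\df z-\int_\Ga \f{\pt u}{\pt\nu_A}v\,\df S.
\]
The boundary term carries a \emph{minus} sign, as in the usual Green formula with outward normal $\nu=(0,1)$ on $\Ga$.

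The identity as printed, with $+$, is false, so no route can prove it. Take $u=r$ and $v\equiv 1$. Then $u\in H^2(\Om;w)$, since $\nabla u\cdot A\nabla u=r^\al$ and $\mcA u=-\al r^{\al-1}\in L^2(\Om)$ for $\al\ge 1$. The left side is $-\int_\mbT\int_0^1\al r^{\al-1}\,\df r\,\df\theta=-2\pi$. The printed right side is $0+\int_\Ga 1\,\df S=2\pi$, whereas the corrected formula gives $0-2\pi$.

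The paper's own proof in fact ends with $\int_\Om\nabla u\cdot A\nabla v\,\df z-\int_\Ga\f{\pt u}{\pt\nu_A}v\,\df S$. It is this minus-sign version that is used later in the computation of $B_2$. You should have tracked the sign and stated that your argument establishes the corrected identity, not the one claimed.

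Apart from the sign, your ingredients are right. You correctly show $A\nabla u\in H(\Div,\Om;w^{-1})$, and the contribution on $\Ga^*$ vanishes by Theorem~\ref{01.22.T1}. Your observation that, for general $u\in H^2(\Om;w)$, the boundary term must be read as the pairing $\langle\ga_\nu(A\nabla u),v\rangle_{H^{-\f{1}{2}}(\Ga),H^{\f{1}{2}}(\Ga)}$ is a real improvement over the paper, which writes it as an integral without comment.
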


\begin{proof}
	Choosing $\zeta=\zeta(r)\in C^\iy(\Om), 0\leq \zeta \leq 1$ such that
	\begin{equation*}
		\zeta=1 \mbox{ on }\left(-\iy, \f{1}{2}\right), \quad \zeta=0 \mbox{ on }\left(\f{3}{4}, +\iy\right). 
	\end{equation*}
	Then $v^1=\zeta v\in H_\Ga^1(\Om;w)$ and $v^2=(1-\zeta)v\in H_0^1(\mbT\ts (\f{1}{4},1))$ and $v=v^1+v^2$. Hence
	\begin{equation*}
		\begin{split}
			-\int_\Om [\Div(A\nabla u)]v\df z
			&=-\int_\Om [\Div(A\nabla u)]v^1\df z-\int_{\mbT\ts (\f{1}{4},1)}[\Div(A\nabla u)]v^2\df z\\
			&=\int_\Om \nabla u\cdot A\nabla v^1\df z+\int_{\mbT\ts (\f{1}{4},1)}A\nabla u\cdot \nabla v^2\df z-\int_{\Ga}\f{\pt u}{\pt \nu_A} v^2\df S\\
			&=\int_\Om \nabla u\cdot A\nabla v\df z-\int_{\Ga} \f{\pt u}{\pt \nu_A}v\df S. 
		\end{split}
	\end{equation*}
		This completes the proof of the corollary. 
\end{proof}

\subsection{The spectrum for the partial differential operator $\mcA$}

After defining the Neumann boundary condition, we introduce the partial differential operator $\mathcal A$. Associated with this operator are both degenerate elliptic and degenerate hyperbolic equations. In this subsection, we focus on the existence of weak solutions to the degenerate elliptic equation and on the spectral properties of the operator $\mathcal A$.
More precisely,
\[
	\mcA u=-\Div(A\nabla u)=-\pt_{\theta\theta}u-\pt_r(r^\al \pt_r u).
\]

We consider the following degenerate elliptic equation 
\begin{equation}\label{01.22.2}
	\begin{cases}
		\mcA u=g, &\mbox{in }\Om, \\
		u=0, &\mbox{on }\Ga,\\
		\f{\pt u}{\pt \nu_A}=0, &\mbox{on }\Ga^*, 
	\end{cases}
\end{equation}
where $\Om,\Ga,\Ga^*$  are defined in Assumption \ref{Assumption (H)}, and $g\in L^2(\Om)$. 

We call $u\in H_\Ga^1(\Om;w)\cap H^2(\Om;w)$ a weak solution of \eqref{01.22.2} if
\begin{equation*}
	\int_\Om \nabla u\cdot A\nabla v\df z=\int_\Om gv\df z, \mbox{ for all } v\in H_\Ga^1(\Om;w).
\end{equation*}

\begin{lemma}\label{01.22.L7}
	The equation \eqref{01.22.2} has a unique weak solution $u\in H_\Ga^1(\Om;w)\cap H^2(\Om;w)$.  Moreover, there exists a positive constant $C$, depending only on $\al$, such that 
	\begin{equation}\label{02.11.2}
		\|u\|_{H_\Ga^1(\Om;w)}\leq C\|g\|_{L^2(\Om)}. 
	\end{equation}
\end{lemma}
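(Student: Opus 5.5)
The plan is to combine Lax--Milgram with the Poincar\'e inequality of Remark \ref{01.21.R1}, and then to read off the $H^2(\Om;w)$ membership directly from the weak formulation. The bilinear form
\[
a(u,v)=\int_\Om \nabla u\cdot A\nabla v\,\df z
\]
is the inner product of $H_\Ga^1(\Om;w)$, as fixed in Remark \ref{01.21.R1}. It is therefore continuous and coercive on the Hilbert space $H_\Ga^1(\Om;w)$, with $a(u,u)=\|u\|_{H_\Ga^1(\Om;w)}^2$.

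\textbf{Step 1: existence, uniqueness and the estimate.} The functional $\ell(v)=\int_\Om gv\,\df z$ is bounded on $H_\Ga^1(\Om;w)$. Indeed, by Remark \ref{01.21.R1},
\[
|\ell(v)|\le \|g\|_{L^2(\Om)}\|v\|_{L^2(\Om)}\le C\|g\|_{L^2(\Om)}\|v\|_{H_\Ga^1(\Om;w)},
\]
where $C$ is the Poincar\'e constant depending only on $\al$. By Riesz (or Lax--Milgram) there is a unique $u\in H_\Ga^1(\Om;w)$ with $a(u,v)=\ell(v)$ for all $v\in H_\Ga^1(\Om;w)$. Testing with $v=u$ gives
\[
\|u\|_{H_\Ga^1(\Om;w)}^2\le C\|g\|_{L^2(\Om)}\|u\|_{H_\Ga^1(\Om;w)},
\]
which is \eqref{02.11.2}.

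\textbf{Step 2: $u\in H^2(\Om;w)$.} Every $\phi\in C_0^\iy(\Om)$ belongs to $C_\Ga^\iy(\Om)\s H_\Ga^1(\Om;w)$, so the weak formulation yields $-\Div(A\nabla u)=g$ in the sense of distributions on $\Om$. Hence $\mcA u=g\in L^2(\Om)$, and together with $u\in H^1_\Ga(\Om;w)\s H^1(\Om;w)$ this gives $u\in H^2(\Om;w)$ by the definition of that space. In particular $A\nabla u\in H(\Div,\Om;w^{-1})$, because $\int_\Om A\nabla u\cdot A^{-1}A\nabla u\,\df z=\int_\Om \nabla u\cdot A\nabla u\,\df z<\iy$.

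\textbf{Step 3: boundary conditions and uniqueness in the stated class.} Theorem \ref{01.22.T2}(ii) then gives $\f{\pt u}{\pt\nu_A}=0$ on $\Ga^*$, and $u=0$ on $\Ga$ holds by Remark \ref{01.22.R1}. Uniqueness within $H_\Ga^1(\Om;w)\cap H^2(\Om;w)$ follows from Step 1: if two weak solutions exist, their difference $d$ satisfies $a(d,d)=0$, so $d=0$ by Poincar\'e.

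The only delicate point is confirming that the distributional identity is legitimate, namely that test functions with compact support in $\Om$ lie in $H_\Ga^1(\Om;w)$. This is immediate from \eqref{01.14.1}. No genuine obstacle is expected, since the degeneracy is entirely absorbed by the Hardy/Poincar\'e bounds of Lemma \ref{01.04.L1}.
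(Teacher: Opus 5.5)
Your proposal is correct and follows essentially the same route as the paper. Existence, uniqueness and \eqref{02.11.2} come from Lax--Milgram together with the Poincar\'e inequality of Remark~\ref{01.21.R1} (testing with $v=u$), and $u\in H^2(\Om;w)$ is read off from the distributional identity $\mcA u=g\in L^2(\Om)$; your Step 3 on the Neumann condition via Theorem~\ref{01.22.T2}(ii) is a harmless addition that the paper does not make at this point.
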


\begin{proof}
	Define
	\begin{equation*}
		B[u,v]=\int_\Om \nabla u\cdot A\nabla v\df z, 
	\end{equation*}
	then, from Remark \ref{01.21.R1},  for all $u,v\in H_\Ga^1(\Om;w)$, we have 
	\begin{equation*}
		\|u\|_{H_\Ga^1(\Om;w)}^2=B[u,u], \quad |B[u,v]|\leq \|u\|_{H_\Ga^1(\Om;w)}\|v\|_{H_\Ga^1(\Om;w)}.
	\end{equation*}
	Note that $g\in L^2(\Om)\s H^{-1}(\Om;w)$ since 
	\begin{equation*}
		\int_\Om gv\df z\leq \|g\|_{L^2(\Om)}\|v\|_{L^2(\Om)}\leq C\|g\|_{L^2(\Om)}\|v\|_{H_\Ga^1(\Om;w)}
	\end{equation*}
	by Remark \ref{01.21.R1}, hence, from the  Lax-Milgram theorem, there exists a weak unique $u\in H_\Ga^1(\Om;w)$ to the equation \eqref{01.22.2}. Moreover, from $\|u\|_{H_\Ga^1(\Om;w)}^2\leq C\|g\|_{L^2(\Om)}\|u\|_{H_\Ga^1(\Om;w)}$ we get \eqref{02.11.2}. 
	
	Finally, from $\mcA u=g$ in the sense of distributions, we get $u\in H^2(\Om;w)$. We complete the proof of this lemma. 
\end{proof}

The following theorem provides an improved regularity result for solutions of \eqref{01.22.2}. It will be used later in the analysis of the degenerate hyperbolic equation \eqref{01.21.1}. 

\begin{theorem}\label{02.01.T1}
	Let $g\in L^2(\Om)$. Let $u$ be the solution of \eqref{01.22.2} with respect to $g$. Then
	\begin{equation*}
		\pt_{\theta\theta}u\in L^2(\Om), \mbox{ and } r^\f{\al}{2}\pt_{\theta r}u\in L^2(\Om),\mbox{ and } r^{1+\f{\al}{2}}\pt_{rr}u\in L^2(\Om). 
	\end{equation*}
	Moreover, there exists a positive constant $C$, depending only on $\al$, such that 
	\begin{equation}\label{02.11.5}
		\int_\Om \left[(\pt_{\theta\theta}u)^2+r^\al (\pt_{\theta r}u)^2+r^{2+\al}(\pt_{rr}u)^2\right]\df z\leq C\int_\Om g^2\df z. 
	\end{equation}
\end{theorem}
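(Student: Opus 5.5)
The plan is to exploit the Fourier structure in $\theta$, since the coefficients depend only on $r$. I expand $u(\theta,r)=\sum_n u_n(r)e_n(\theta)$ and $g=\sum_n g_n(r)e_n(\theta)$ in the basis $\{\sin n\theta,\cos n\theta\}$ of Remark \ref{01.14.R-1}(2). Each coefficient then solves the one-dimensional degenerate problem
\begin{equation*}
	n^2u_n-(r^\al u_n')'=g_n \mbox{ on }(0,1),\quad u_n(1)=0,\quad (r^\al u_n')(0)=0.
\end{equation*}
The boundary condition at $0$ comes from Theorem \ref{01.22.T2}(ii) applied mode by mode. Equivalently, and this is the version I will actually write, I work directly in two dimensions with difference quotients (or Galerkin truncation) in $\theta$. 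Since $\pt_\theta$ commutes with $\mcA$, the function $\pt_\theta u$ solves \eqref{01.22.2} with right-hand side $\pt_\theta g$ in the weak sense.

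\textbf{Step 1 (tangential estimates).} I test the weak formulation with $v=-\pt_{\theta\theta}u$. To justify this, I first use $v=-\Delta_h^{-}\Delta_h^{+}u$, built from $\theta$-difference quotients, which lies in $H_\Ga^1(\Om;w)$ because the translations in $\theta$ preserve this space. Periodicity lets me integrate by parts in $\theta$ with no boundary terms, giving
\begin{equation*}
	\int_\Om \left[(\pt_{\theta\theta}u)^2+r^\al(\pt_{\theta r}u)^2\right]\df z=-\int_\Om g\,\pt_{\theta\theta}u\df z .
\end{equation*}
Cauchy--Schwarz then bounds the first two terms of \eqref{02.11.5} by $C\|g\|_{L^2}^2$. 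In Fourier terms this is simply the identity $n^4\|u_n\|^2+n^2\|r^{\al/2}u_n'\|^2=n^2\int g_nu_n$, summed over $n$.

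\textbf{Step 2 (the radial second derivative).} From the equation, in the distributional sense on $\Om$ (and in $L^2$ by Step 1),
\begin{equation*}
	\pt_r(r^\al\pt_ru)=-g-\pt_{\theta\theta}u\in L^2(\Om),
\end{equation*}
so $\|\pt_r(r^\al\pt_r u)\|_{L^2}\le C\|g\|_{L^2}$. Expanding gives $r^\al\pt_{rr}u=\pt_r(r^\al\pt_ru)-\al r^{\al-1}\pt_ru$. Multiplying by $r$ yields
\begin{equation*}
	r^{1+\al}\pt_{rr}u=r\,\pt_r(r^\al\pt_ru)-\al r^{\al}\pt_ru .
\end{equation*}
Hence
\begin{equation*}
	\int_\Om r^{2+\al}(\pt_{rr}u)^2\df z=\int_\Om r^{-\al}\left(r^{1+\al}\pt_{rr}u\right)^2\df z.
\end{equation*}
The contribution of the first term is at most $\int_\Om r^{2-\al}[\pt_r(r^\al\pt_ru)]^2\df z\le\|\pt_r(r^\al\pt_r u)\|_{L^2}^2$, since $2-\al>0$. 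The contribution of the second is $\al^2\int_\Om r^{\al}(\pt_ru)^2\df z\le C\|u\|_{H_\Ga^1(\Om;w)}^2\le C\|g\|^2_{L^2}$ by Lemma \ref{01.22.L7}. This closes \eqref{02.11.5}.

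\textbf{Main obstacle.} The delicate point is the rigorous justification of Step 1 near the degenerate boundary $\Ga^*$, namely that the test function $-\pt_{\theta\theta}u$ (or its difference-quotient version) is admissible. The Fourier route avoids this. Each $u_n$ is the Lax--Milgram solution of the mode-$n$ problem, and the identity in Step 1 is just the weak formulation tested with $v=u_ne_n\in H_\Ga^1(\Om;w)$, multiplied by $n^2$. Summing over $n$ then uses only orthogonality and monotone convergence. I will therefore present the Fourier argument as the proof and keep the difference quotients as a remark. The remaining care in Step 2 is that all manipulations are pointwise identities for $r>0$, where $u\in H^2_\loc$ by interior regularity, so no boundary term at $r=0$ ever appears.
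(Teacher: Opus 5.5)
Your proof is correct, and your Step 2 is the paper's closing argument. The paper also reads $\pt_r(r^\al\pt_r u)=-g-\pt_{\theta\theta}u\in L^2(\Om)$ off the equation. It then uses $r^{1-\f{\al}{2}}\pt_r(r^\al\pt_ru)=\al r^{\f{\al}{2}}\pt_ru+r^{1+\f{\al}{2}}\pt_{rr}u$, which is your identity divided by $r^{\al/2}$. The genuine difference is Step 1.

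The paper runs a localized Nirenberg argument:
\begin{itemize}
\item It tests with $v=-D_\theta^{-h}(\zeta^2D_\theta^hu)$, where $\zeta(\theta)$ vanishes near an arbitrary $\theta_0$.
\item It absorbs the commutator terms produced by $\zeta'$, using the bound $\|\pt_\theta u\|_{L^2}\le C\|g\|_{L^2}$ from \eqref{02.11.2}.
\item It invokes the difference-quotient characterization of Sobolev functions to let $h\to 0$, then concludes globally because $\theta_0$ is arbitrary.
\end{itemize}

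Your Fourier route replaces all of this with the exact mode-by-mode identity $n^4\|u_n\|^2+n^2\|r^{\al/2}u_n'\|^2=n^2\int_0^1 g_nu_n\,\df r$. This gives the tangential bound with constant $1$, independent of $\al$, with no cutoff, no commutators and no difference-quotient lemma. The test function is also trivially admissible, which addresses your own concern about $\Ga^*$.

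The price is that your argument uses the product structure globally: coefficients independent of $\theta$, and an explicit orthonormal basis on $\mbT$ (or Laplace--Beltrami eigenfunctions if $\mbT$ is replaced by a closed manifold). The paper's argument is local in $\theta$, so it would survive smooth $\theta$-dependent coefficients at the cost of extra commutator terms.

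When you write it out, spell out the three facts you currently only assert:
\begin{itemize}
\item $u_ne_n\in H_\Ga^1(\Om;w)$. This holds because the $\theta$-Fourier projection is a contraction on $H^1(\Om;w)$ that maps $C_\Ga^\iy(\Om)$ into itself.
\item The per-mode Young inequality $n^4\|u_n\|^2+2n^2\|r^{\al/2}u_n'\|^2\le\|g_n\|^2$ (or Cauchy--Schwarz on partial sums). This avoids assuming a priori that $\pt_{\theta\theta}u\in L^2$.
\item The identification, via Parseval in $\theta$ for a.e.\ $r$, of $\sum n^4\|u_n\|^2$ and $\sum n^2\|r^{\al/2}u_n'\|^2$ with $\|\pt_{\theta\theta}u\|^2_{L^2(\Om)}$ and $\|r^{\al/2}\pt_{\theta r}u\|^2_{L^2(\Om)}$, where the derivatives are distributional.
\end{itemize}
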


\begin{proof}
	We denote the difference quotient
	\begin{equation*}
		D_\theta^hu=\f{1}{h}\big(u(\theta+h,r)-u(\theta,r)\big). 
	\end{equation*}
	For each $\theta_0\in \mbT$, choosing $\de_0\in (0, \f{1}{16})$ and  $\zeta=\zeta(\theta)\in C^\iy(\mbT), 0\leq \zeta\leq 1$ such that 
	\begin{equation*}
		\zeta=0 \mbox{ on } (\theta_0-\de_0,\theta_0+\de_0), \quad \zeta=1 \mbox{ on }\mbT-(\theta_0-2\de_0,\theta_0+2\de_0),  
	\end{equation*}
	and $|\zeta'|\leq C \mbox{ and } |\zeta''|\leq C$, 
	where the constants $C>0$ are absolute. 
	Taking 
	\begin{equation*}
		v=-D_\theta^{-h}\left(\zeta^2D_\theta^h u\right) \mbox{ for } h\in (0,\de_0),
	\end{equation*}
	then $v\in H_\Ga^1(\Om;w)$. Using $v$ as the test function, from \eqref{01.22.2}, we get
	\begin{equation*}
		\begin{split}
			A_1\equiv \int_\Om \nabla u\cdot A\nabla v\df z=\int_\Om gv\df z\equiv A_2. 
		\end{split}
	\end{equation*}
	
	Note that for the functions $u,v$ on $\Om$, we have 
	\begin{equation*}
		D_\theta^h\nabla v=\nabla  D_\theta^hv,\quad \int_\Om vD_\theta^{-h}u\df z=-\int_\Om uD_\theta^h v\df z, 
	\end{equation*}  then 
	\begin{equation*}
		\begin{split}
			A_1
			&=\int_\Om \nabla u\cdot A\nabla \left[-D_\theta^h (\zeta^2D_\theta^hu)\right]\df z\\
			&=\int_\Om (\pt_{\theta}u )\pt_{\theta}\left[-D_\theta^h (\zeta^2D_\theta^hu)\right]\df z+\int_\Om r^\al(\pt_ru)\pt_r\left[-D_\theta^h (\zeta^2D_\theta^hu)\right]\df z, 
		\end{split}
	\end{equation*}
	i.e., 
	\begin{equation*}
		\begin{split}
			A_1
			&=\int_\Om (D_\theta^h D_{\theta}u)\pt_{\theta}(\zeta^2D_\theta^hu)\df z+\int_\Om r^\al (D_\theta^h \pt_ru)\pt_r(\zeta^2D_\theta^hu)\df z\\
			&=\int_\Om \zeta^2\left(  (D_\theta^h \pt_{\theta}u)^2+r^\al (D_\theta^h \pt_ru)^2\right)\df z +2\int_\Om \zeta (\pt_{\theta}\zeta)(D_\theta^h \pt_{\theta}u)D_\theta^h u\df z\\
			&\geq \f{1}{2}\int_\Om \zeta^2\left(  (D_\theta^h \pt_{\theta}u)^2+r^\al (D_\theta^h \pt_ru)^2\right)\df z-4\int_\Om (\pt_{\theta}\zeta)^2(D_\theta^hu)^2\df z. 
		\end{split}
	\end{equation*}
	This, together with \cite[Theorem 3 (i)  (p. 292) in Chapter 5.8.2]{Evans}, we get
	\begin{equation*}
		\begin{split}
			A_1\geq   \f{1}{2}\int_\Om \zeta^2\left( (D_\theta^h \pt_{\theta}u)^2+r^\al (D_\theta^h \pt_ru)^2\right)\df z-C\int_\Om |\pt_\theta u|^2\df z, 
		\end{split}
	\end{equation*}
	where the constant $C>0$ is absolute. 
	
	Now, from \cite[Theorem 3 (i)  (p. 292) in Chapter 5.8.2]{Evans}, we get
	\begin{equation*}
		\begin{split}
			A_2
			&\leq \f{1}{2}\int_\Om g^2\df z+\f{1}{2}\int_\Om v^2\df z\leq \f{1}{2}\int_\Om g^2\df z+C\int_\Om \left|\pt_{\theta}\left(\zeta^2D_\theta^hu\right)\right|^2\df z\\
			&\leq \f{1}{2}\int_\Om g^2\df z+C\int_\Om |\pt_{\theta}u|^2\df z+\f{1}{4}\int_\Om \zeta^2 \left(D_\theta^h \pt_{\theta}u\right)^2\df z. 
		\end{split}
	\end{equation*}
	Then 
	\begin{equation*}
		\int_\Om \zeta^2 \left(  (D_\theta^h\pt_{\theta}u)^2+r^\al (D_\theta^h \pt_ru)^2\right)\df z\leq C\int_\Om |\pt_\theta u|^2\df z+C\int_\Om g^2\df z, 
	\end{equation*}
	and hence
	\begin{equation}\label{02.11.6}
		\begin{split}
			\int_\Om \zeta^2\left(  (\pt_{\theta\theta}u)^2+r^\al (\pt_{\theta r}u)^2\right)\df z\leq C\int_\Om g^2\df z, 
		\end{split}
	\end{equation}
	by \eqref{02.11.2} and \cite[Theorem 3 (ii)  (p. 292) in Chapter 5.8.2]{Evans}, where the constant $C>0$ depending only on $\al$. This implies 
	\begin{equation*}
		\pt_{\theta\theta}u\in L^2(\Om) \mbox{ and } r^\f{\al}{2}\pt_{\theta r} u\in L^2(\Om). 
	\end{equation*}
	
	Finally, from \eqref{02.11.6} and  
	\begin{equation*}
		-\pt_{\theta\theta}u-\pt_r(r^\al \pt_ru)=g
	\end{equation*}
	we get \eqref{02.11.5} and $\pt_r(r^\al \pt_ru)\in L^2(\Om)$ and  
	\begin{equation*}
		r^{1+\f{\al}{2}}\pt_{rr} u\in L^2(\Om), 
	\end{equation*}
	by 
	\begin{equation*}
		r^{1-\f{\al}{2}}\pt_r(r^\al \pt_ru)=\al r^\f{\al}{2} \pt_ru+r^{1+\f{\al}{2}}\pt_{rr}u. 
	\end{equation*}
	This completes the proof of this theorem.
\end{proof}

The following lemma will also be used in the analysis of the degenerate hyperbolic equation \eqref{01.21.1}.

\begin{lemma}\label{02.02.L1}
	Let $u$ be the weak solution of \eqref{01.22.2} with respect to $g$. Then we have  
	
	(i) $\pt_\theta u=0$ on $\Ga$, 
	
	(ii) for any $\e\in [0,\f{2-\al}{4})$, 
	\begin{equation*}
		r^\f{\al}{2}u^2\in W^{1,1}(\Om), \mbox{ and } r^{\f{\al}{2}+\e} u\in H^1(\Om), \mbox{ and } r u^2=ru=0 \mbox{ on } \Ga^*, 
	\end{equation*} 
	
	(iii) and 
	\begin{equation*}
		r(\nabla u\cdot A\nabla u)\in W^{1,1}(\Om), \mbox{ and } r(\nabla u\cdot A\nabla u)=0 \mbox{ on }\Ga^*. 
	\end{equation*}
\end{lemma}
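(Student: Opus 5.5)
The argument rests on the improved regularity of Theorem \ref{02.01.T1} together with the Hardy-type bounds of Lemma \ref{01.04.L1}. All three assertions are first reduced to estimates that hold uniformly for smooth approximants, and then passed to the limit. Since $u\in H_\Ga^1(\Om;w)$, and $\Om\cap\{r>1/4\}$ is a region where $A$ is uniformly elliptic, we can work locally there.

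\textbf{Part (i).} Near $\Ga$ the operator is nondegenerate. Classical interior-up-to-boundary regularity for the Dirichlet problem therefore gives $u\in H^2(\mbT\times(\f12,1))$, with the trace $u|_\Ga=0$. Because $\Ga=\mbT\times\{1\}$ is flat and $\mbT$ is periodic, tangential differentiation commutes with the trace. Concretely, $\pt_\theta u\in H^1(\mbT\times(\f12,1))$, and its trace equals $\pt_\theta(u|_\Ga)=0$. One can also see this directly: $D_\theta^h u$ vanishes on $\Ga$ and converges to $\pt_\theta u$ in $H^1$ near $\Ga$, by the difference-quotient bounds \eqref{02.11.6}.

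\textbf{Part (ii).} We use the product rule. We have $\pt_r(r^{\al/2}u^2)=\f{\al}{2}r^{\al/2-1}u^2+2r^{\al/2}u\pt_r u$.
\begin{itemize}
\item The first term is integrable. For $\al>1$ this follows from $r^{\al/2-1}\le r^{\al-2}$ and Lemma \ref{01.04.L1}(1). For $\al=1$ it follows from Lemma \ref{01.04.L1}(2) with $\be=\f12$.
\item The second term is bounded by $\int u^2+\int r^\al(\pt_ru)^2$.
\item The $\theta$-derivative is controlled by $\|u\|\,\|\pt_\theta u\|$.
\end{itemize}
For $r^{\al/2+\e}u$, the only delicate term is $r^{\al/2+\e-1}u$. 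Its square $r^{\al+2\e-2}u^2$ is integrable exactly because $\e<\f{2-\al}{4}$. Indeed, the exponent then satisfies $\al+2\e-2>-1$, and more precisely it lies in the range covered by Lemma \ref{01.04.L1}: it equals $r^{\al-2}$ times $r^{2\e}\le1$ when $\al>1$, and when $\al=1$ it equals $r^{-1+2\e}$ with $\be=2\e>0$ (the case $\e=0$, $\al=1$ again uses $\be>0$ small, since $r^{-1}\cdot r^{\,}$ weighting is harmless there).

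For the traces on $\Ga^*$, take the $W^{1,1}$ function $F=r^{\al/2}u^2$. It has a trace in $L^1(\Ga^*)$, so $ru^2=r^{1-\al/2}F$ has zero trace because $1-\al/2>0$. To make this rigorous, write $\int_{\Ga^*_\de}ru^2=\de^{1-\al/2}\int_{\Ga^*_\de}F$; the integral stays bounded as $\de\to0$ by the $W^{1,1}$ bound, so the product tends to $0$. The same argument applies to $ru=r^{1-\al/2-\e}(r^{\al/2+\e}u)$ in $H^1$.

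\textbf{Part (iii).} This is the main obstacle. Set $G=r(\nabla u\cdot A\nabla u)=r(\pt_\theta u)^2+r^{1+\al}(\pt_ru)^2$ and differentiate.
\begin{itemize}
\item $\pt_\theta G$ involves $r\,\pt_\theta u\,\pt_{\theta\theta}u$ and $r^{1+\al}\pt_ru\,\pt_{\theta r}u$. The latter is bounded by $\|r^{\al/2}\pt_ru\|\,\|r^{\al/2}\pt_{\theta r}u\|$ using $r\le1$, and both are controlled by \eqref{02.11.5}.
\item $\pt_rG$ contains the terms $(\pt_\theta u)^2$, $2r\pt_\theta u\pt_{\theta r}u$, $(1+\al)r^\al(\pt_ru)^2$ and $2r^{1+\al}\pt_ru\pt_{rr}u$. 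The mixed term is handled as $\|\pt_\theta u\|\,\|r^{\al/2}\pt_{\theta r}u\|$, since $r\le r^{\al/2}$. The last term is $2(r^{\al/2}\pt_ru)(r^{1+\al/2}\pt_{rr}u)$, which is controlled by Theorem \ref{02.01.T1}.
\end{itemize}
Hence $G\in W^{1,1}(\Om)$.

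For the vanishing trace, the plan is to use a mean-value argument. $G$ has an $L^1$ trace on $\Ga^*$ and $\de\mapsto\int_{\Ga^*_\de}G$ is continuous, as in Lemma \ref{01.22.L5}. Moreover $\int_0^{1/8}\f1\de\int_{\Ga^*_\de}G\,\df\de=\int r^{-1}G\le\int\nabla u\cdot A\nabla u<\infty$. A continuous nonnegative function $g(\de)$ with $\int_0 g(\de)/\de\,\df\de<\infty$ must satisfy $g(0)=0$. This is the same device used in the proof of Theorem \ref{01.22.T1}, and it gives the trace of $G$ on $\Ga^*$ equal to zero.

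The same mean-value trick offers an alternative route for the zero traces in (ii), via $\int r^{-1}\cdot ru^2<\infty$. The main technical care is in justifying the product rule for these weighted products. To do this, apply the computations above to the approximations of $u$ in $H^2(\Om;w)$ from Remark \ref{02.11.R1}, or to the $\theta$-difference quotients, and pass to the limit using the uniform bounds.
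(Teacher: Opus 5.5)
Your argument is correct apart from one edge case, discussed in the second paragraph. In (i)--(ii) it is the paper's argument with more detail supplied: boundary $H^2$ regularity near $\Gamma$, and the explicit factor $\delta^{1-\alpha/2}$ for the traces. The real difference is the vanishing trace in (iii).

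The paper splits $G=r(\partial_\theta u)^2+r^{1+\alpha}(\partial_r u)^2$ and treats the two pieces separately:
\begin{itemize}
\item For the first piece it applies (ii) to $\partial_\theta u$. This needs $\partial_\theta u\in H^1_\Gamma(\Omega;w)$, i.e.\ (i) together with Theorem~\ref{02.01.T1}.
\item For the second it shows $r\partial_r u\in H^1(\Omega;w)$. It then identifies $-\int_{\Gamma^*_\delta}r^{1+\alpha}(\partial_r u)^2\,\mathrm{d}S$ with the normal-trace pairing of $A\nabla u$ against $\zeta r\partial_r u$, and sends this to $0$ via Lemma~\ref{01.22.L5}, Theorem~\ref{01.22.T1} and \eqref{02.11.7}.
\end{itemize}
You instead apply the $\int_0 \mathrm{d}\delta/\delta$ device once, to the nonnegative $W^{1,1}$ function $G$, using $r^{-1}G=\nabla u\cdot A\nabla u\in L^1(\Omega)$.

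Your route buys several things. It is shorter and handles both pieces at once. It needs neither (i) nor the normal-trace machinery. It therefore avoids extending Lemma~\ref{01.22.L5} and \eqref{02.11.7}, which are stated for test functions in $H^1_\Gamma(\Omega)$ and $C^\infty_\Gamma(\overline\Omega)$, to $\zeta r\partial_r u$. It also shows that the vanishing follows from finite weighted energy together with $G\in W^{1,1}(\Omega)$. The paper's route instead ties the radial part to the natural condition $\partial u/\partial\nu_A=0$.

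Two small points on your version:
\begin{itemize}
\item State explicitly that $G\ge 0$ is what turns $\int_{\Gamma^*}\operatorname{tr}G=0$ into $\operatorname{tr}G=0$.
\item For the product rules, convergence in $H^2(\Omega;w)$ does not control the weighted second derivatives you use in (iii). It is simpler to note that $u\in H^2_{\mathrm{loc}}(\Omega)$ by uniform ellipticity on $\{r>\delta\}$. The product rule then holds distributionally on $\Omega$, and your $L^1$ bounds give $W^{1,1}(\Omega)$ directly.
\end{itemize}

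The edge case is $\alpha=1$, $\varepsilon=0$ in (ii). Your parenthetical there is not an argument, and no argument can exist, because the claim $r^{\alpha/2+\varepsilon}u\in H^1(\Omega)$ is false. For $\alpha=1$ and $g\equiv 1$ the weak solution is $u=1-r$, and $\partial_r(r^{1/2}u)=\frac12 r^{-1/2}-\frac32 r^{1/2}\notin L^2(\Omega)$. The paper's proof has the same defect, since it implicitly uses Lemma~\ref{01.04.L1}(2) with $\beta=0$. For $\alpha=1$ the $H^1$ assertion must be restricted to $\varepsilon>0$; the trace conclusion $ru=0$ on $\Gamma^*$ is unaffected.

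Relatedly, your justification of the integrability of $r^{\alpha+2\varepsilon-2}u^2$ is misattributed. It does not come from $\varepsilon<\frac{2-\alpha}{4}$. It comes from $\varepsilon\ge 0$ via Lemma~\ref{01.04.L1}(1) when $\alpha>1$, and from $\varepsilon>0$ via Lemma~\ref{01.04.L1}(2) with $\beta=2\varepsilon$ when $\alpha=1$. The upper bound on $\varepsilon$ only serves to make $1-\frac{\alpha}{2}-\varepsilon>0$ in the trace step.
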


\begin{proof}
	We prove this lemma by the following steps. 
	
	{\it Step 1}. We prove (i).
	
	Since $u=0$ on $\Ga=\mbT\ts \{1\}$, then $\pt_\theta u=0$ on $\Ga$. This proves (i). 
	
	{\it Step 2}. We prove (ii). 
	
	Let $u\in H_\Ga^1(\Om;w)$. From $\int_\Om r^\f{\al}{2}u^2\df z\leq \int_\Om u^2\df z$, and from Lemma \ref{01.04.L1} we obtain 
	\begin{equation*}
		\begin{split}
			\int_\Om \left|\nabla (r^\f{\al}{2}u^2)\right|\df z
			\leq C\int_\Om r^{\f{\al}{2}-1}u^2\df z+C\int_\Om \nabla u\cdot A\nabla u\df z\leq C\int_\Om \nabla u\cdot A\nabla u\df z, 
		\end{split}
	\end{equation*}
	where the constants $C>0$ depending only $\al$, 
	hence $r^\f{\al}{2}u^2\in W^{1,1}(\Om)$ and $ru^2=r^{1-\f{\al}{2}}(r^\f{\al}{2}u^2)=0$ on $\Ga^*$.

	Since $\int_\Om (r^{\f{\al}{2}+\e}u)^2\df z\leq \int_\Om u^2\df z$ with $\e\in [0,\f{2-\al}{4})$, and from Lemma \ref{01.04.L1} we get
	\begin{equation*}
		\begin{split} 
			\int_\Om \left|\nabla (r^{\f{\al}{2}+\e}u)\right|^2\df z
			&\leq C\int_\Om  (\pt_{\theta}u)^2\df z+C\int_\Om r^\al(\pt_ru)^2\df z+C\int_\Om u^2r^{\al+2\e-2}\df z\\
			&\leq C\int_\Om \nabla u\cdot A\nabla u\df z, 
		\end{split} 
	\end{equation*}
	we obtain $r^{\f{\al}{2}+\e}u\in H^1(\Om)$. Hence $ru=r^{1-\f{\al}{2}-\e}(r^{\f{\al}{2}+\e}u)=0$ on $\Ga^*$. 
	
	Since $\int_\Om r(\pt_{\theta}u)^2\df z\leq \int_\Om (\pt_{\theta}u)^2\df z$, and  
	\begin{equation*}
		\int_\Om \left|\pt_{\theta}\left[r(\pt_{\theta}u)^2\right]\right|\df z\leq \int_\Om (\pt_{\theta}u)^2\df z+\int_\Om (\pt_{\theta\theta}u)^2\df z, 
	\end{equation*}
	and 
	\begin{equation*}
		\int_\Om \left|\pt_r\left[r(\pt_{\theta}u)^2\right]\right|\df z\leq 2\int_\Om (\pt_{\theta}u)^2\df z+\int_\Om r^\al (\pt_{\theta r}u)^2\df z, 
	\end{equation*}
	these imply $r(\pt_{\theta}u)^2\in W^{1,1}(\Om)$  by Theorem \ref{02.01.T1}. 
	
	{\it Step 3}. We prove (iii). 
	
	Since $\int_\Om r^{1+\al}(\pt_ru)^2\df z\leq \int_\Om r^\al (\pt_r u)^2\df z$, and 
	\begin{equation*}
		\int_\Om \left|\pt_{\theta}\left[r^{1+\al}(\pt_ru)^2\right]\right|\df z\leq \int_\Om r^\al (\pt_ru)^2\df z+\int_\Om r^\al (\pt_{\theta r}u)^2\df z, 
	\end{equation*}
	and 
	\begin{equation*}
		\begin{split}
			\int_\Om \left|\pt_r\left[r^{1+\al}(\pt_ru)^2\right]\right|\df z\leq (2+\al)\int_\Om r^\al (\pt_ru)^2\df z+\int_\Om r^{2+\al}(\pt_{rr}u)^2\df z. 
		\end{split}
	\end{equation*}
	This implies that $r^{1+\al}(\pt_ru)^2\in W^{1,1}(\Om)$. From above, we get $r(\nabla u\cdot A\nabla u)\in W^{1,1}(\Om)$. 
	
	Since  $\pt_{\theta}u\in L^2(\Om)$, and from Theorem \ref{02.01.T1} we know
	\begin{equation*}
		\pt_{\theta}(\pt_{\theta}u)=\pt_{\theta\theta} u\in L^2(\Om), \mbox{ and } r^\f{\al}{2}\pt_r(\pt_{\theta }u)=r^\f{\al}{2}\pt_{\theta r}u\in L^2(\Om), 
	\end{equation*}
	and from (i) we get $\pt_{\theta}u\in H_\Ga^1(\Om;w)$.  Hence $r(\pt_{\theta}u)^2=0$ on $\Ga^*$ by (ii) (replace $u$ by  $\pt_\theta u$ in (ii)). 
	
	Finally, we prove $r^{1+\al} (\pt_ru)^2=0$ on $\Ga^*$. 
	
	Since $\int_\Om (r\pt_ru)^2\df z\leq \int_\Om r^\al (\pt_ru)^2\df z$, and 
	\begin{equation*}
		\int_\Om \left(\pt_{\theta} [r\pt_ru]\right)^2\df z\leq \int_\Om r^\al (\pt_{\theta r}u)^2\df z, 
	\end{equation*}
	and 
	\begin{equation*}
		\int_\Om r^\al (\pt_r[r\pt_ru])^2\df z\leq 2\int_\Om r^\al (\pt_ru)^2\df z+2\int_\Om r^{2+\al}(\pt_{rr}u)^2\df z, 
	\end{equation*}
	then $r\pt_r u\in H^1(\Om;w)$. Taking $\zeta=\zeta(r)\in C^\iy(\ol\R), 0\leq \zeta\leq 1$ such that 
	\begin{equation*}
		\zeta=1 \mbox{ on } \left(0,\f{1}{2}\right), \quad \zeta=0 \mbox{ on } \left(\f{3}{4},1\right), \quad |\zeta'|\leq C, 
	\end{equation*}
	where the constant $C>0$ is absolute. Hence we get 
	\begin{equation*}
		\begin{split} 
			-\int_{\Ga_\de^*}r^{1+\al} (\pt_ru)^2\df S
			&=g(\de; A\nabla u, \zeta r\pt_ru)\\
			&=\lg \ga_\nu (A\nabla u), \zeta r\pt_ru\rg_{H^{-\f{1}{2}}(\Ga_\de^*  ), H^\f{1}{2}(\Ga_\de^*  )}\ra 0 \mbox{ as } \de\ra 0^+
		\end{split} 
	\end{equation*}
	by $u\in H^2(\Om;w)$ and  Lemma \ref{01.22.L5} and Theorem \ref{01.22.T1} and \eqref{02.11.7}.  Therefore, $r^{1+\al}(\pt_ru)^2=0$ on $\Ga^*$. We complete the proof of this lemma. 
\end{proof}

\subsection{Spectrum}

\begin{notation}\label{01.22.N1}
Now, from Lemma \ref{01.21.L1}, the partial differential operator $\mcA$ enjoys a discrete spectrum
\begin{equation*}
	0<\la_1<\la_2\leq \la_3\leq \cdots \ra +\iy. 
\end{equation*}
i.e., $\la_n\ (n\in\N^*)$ is the solution of the following equation 
\begin{equation}\label{01.22.3}
	\begin{cases}
		\mcA\Phi_n=\la_n\Phi_n, &\mbox{in }\Om,\\
		\Phi_n=0, &\mbox{on }\Ga,\\
		\f{\pt\Phi_n}{\pt\nu_A}=0, &\mbox{on }\Ga^*,  
	\end{cases}
\end{equation}
and $\Phi_n\in D(\mcA)$ is called the eigenfunction of $\mcA$ with respect to the eigenvalue $\la_n$. We denote $\Phi_n\ (n\in\N^*)$ is the orthonormal basis of $L^2(\Om)$. See \cite[Theorem 7 in Appendix D.6, p. 728]{Evans}. 
\end{notation}

\begin{lemma}\label{06.28.L1}
	Let $u=\sum_{i=1}^\iy u_i \Phi_i\in H_\Ga^1(\Om;w)$ with $u_i=(u,\Phi_i)_{L^2(\Om)}$ for all $i\in\N^*$. We have $\nabla u=\sum_{i=1}^\iy u_i\nabla \Phi_i$ and $\|u\|_{H_\Ga^1(\Om;w)}=(\sum_{i=1}^\iy u_i^2\la_i)^\f{1}{2}$, and
	\begin{equation*}
		u\in H^2(\Om;w)\Lra \sum_{i=1}^\iy u_i^2\la_i^2<\iy,
	\end{equation*}
	and
	\begin{equation*}
		\mcA u=\sum_{i=1}^\iy u_i\la_i\Phi_i, \mbox{ and } \|\mcA u\|_{L^2(\Om)}=\left(\sum_{i=1}^\iy u_i^2\la_i^2\right)^\f{1}{2}.
	\end{equation*}
\end{lemma}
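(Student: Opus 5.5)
The argument is the standard spectral-calculus one for a self-adjoint operator with compact resolvent, checked against the weighted definitions. Two facts will be used throughout. First, by Notation \ref{01.22.N1}, $\{\Phi_i\}$ is an orthonormal basis of $L^2(\Om)$ with $\Phi_i\in D(\mcA)$. Second, Theorem \ref{01.22.T2}(ii) applied with $u=\Phi_i$ gives
\begin{equation*}
\int_\Om \nabla\Phi_i\cdot A\nabla v\,\df z=\la_i\int_\Om \Phi_i v\,\df z \quad\text{for all } v\in H_\Ga^1(\Om;w).
\end{equation*}
With $v=\Phi_j$ this yields $(\Phi_i,\Phi_j)_{H_\Ga^1(\Om;w)}=\la_i\de_{ij}$. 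Hence $\{\Phi_i/\sqrt{\la_i}\}$ is orthonormal in $H_\Ga^1(\Om;w)$, equipped with the inner product of Remark \ref{01.21.R1}.

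\textbf{Completeness in $H_\Ga^1(\Om;w)$.} Suppose $v\in H_\Ga^1(\Om;w)$ is $H_\Ga^1$-orthogonal to every $\Phi_i$. By the identity above, $\la_i(\Phi_i,v)_{L^2}=0$ for all $i$, so $v=0$ because $\{\Phi_i\}$ is complete in $L^2$. For $u\in H_\Ga^1(\Om;w)$, the $H_\Ga^1$-Fourier coefficient along $\Phi_i/\sqrt{\la_i}$ is
\begin{equation*}
(u,\Phi_i)_{H_\Ga^1}/\sqrt{\la_i}=\sqrt{\la_i}\,u_i .
\end{equation*}
Therefore $\sum u_i\Phi_i$ converges to $u$ in $H_\Ga^1(\Om;w)$, and Parseval gives $\|u\|_{H_\Ga^1}^2=\sum\la_i u_i^2$. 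Convergence in $H_\Ga^1(\Om;w)$ means that $A^{1/2}\nabla(\cdot)$ converges in $L^2$. Since $A^{1/2}$ is invertible on $\{r>0\}$, the partial sums of $\nabla u$ converge in $L^2_{\loc}$ (indeed weighted $L^2$). So $\nabla u=\sum u_i\nabla\Phi_i$ in that sense.

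\textbf{Characterization of $H^2(\Om;w)$.} Take $u\in H_\Ga^1\cap H^2(\Om;w)$ and apply Theorem \ref{01.22.T2}(ii) twice, once with $(u,\Phi_i)$ and once with $(\Phi_i,u)$:
\begin{equation*}
(\mcA u,\Phi_i)_{L^2}=\int_\Om\nabla u\cdot A\nabla\Phi_i\,\df z=\la_i u_i .
\end{equation*}
Parseval in $L^2$ then gives $\mcA u=\sum \la_iu_i\Phi_i$ and $\|\mcA u\|^2=\sum\la_i^2u_i^2<\iy$.

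Conversely, assume $\sum\la_i^2u_i^2<\iy$ and set $g=\sum\la_iu_i\Phi_i\in L^2$. Lemma \ref{01.22.L7} provides a unique weak solution $\tilde u\in D(\mcA)$ of $\mcA\tilde u=g$. By the forward direction, its coefficients satisfy $\la_i\tilde u_i=\la_iu_i$, so $\tilde u=u$ and $u\in H^2(\Om;w)$.

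The only delicate point is the application of Green's formula (Theorem \ref{01.22.T2}(ii)), which requires one factor in $H^2(\Om;w)$ and the other in $H^1_\Ga(\Om;w)$. Both directions are arranged so that this holds: the $\Phi_i$ lie in $D(\mcA)$, and so does $u$ whenever the formula is applied to it. With that checked, no boundary terms on $\Ga^*$ arise.
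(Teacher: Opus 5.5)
Your proof is correct. Its first half (orthonormality of $\{\lambda_i^{-1/2}\Phi_i\}$ in $H_\Gamma^1(\Omega;w)$ from Theorem~\ref{01.22.T2}(ii), completeness via $\lambda_i(\Phi_i,v)_{L^2}=0$, then Parseval) is the paper's argument essentially verbatim.

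In the forward direction you rest on the same identity $(\mcA u,\Phi_i)_{L^2}=\lambda_i u_i$ as the paper. The paper, however, only extracts the Bessel-type bound $\sum_{i\le n}\lambda_i^2u_i^2\le\|\mcA u\|_{L^2}^2$, using the truncations $\sum_{i\le n}u_i\Phi_i$ and Cauchy--Schwarz. You get the expansion of $\mcA u$ and the norm identity at once from Parseval.

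The real difference is the converse. The paper tests $\mcA u$ against $\varphi\in C_0^\infty(\Omega)$ and pushes the $H_\Gamma^1$-convergent series $\sum u_i\Phi_i$ through the bilinear form. It then recognizes the resulting distribution as the $L^2$ function $\sum\lambda_iu_i\Phi_i$. You instead invoke Lemma~\ref{01.22.L7} to solve $\mcA\tilde u=\sum\lambda_iu_i\Phi_i$, and identify $\tilde u=u$ by matching coefficients (using $\lambda_i>0$).

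The paper's route needs only the first half of the lemma and produces the formula for $\mcA u$ directly. Yours avoids the limit interchange at the cost of appealing to the Lax--Milgram solvability of the elliptic problem. Both are sound. Your remark that $\nabla u=\sum u_i\nabla\Phi_i$ should be read in the weighted sense ($A^{1/2}\nabla$ of the partial sums converging in $L^2$) is a useful precision the paper leaves implicit.
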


\begin{proof}
	From \eqref{01.22.3}, from $\Phi_n\in D(\mcA)=H^2(\Om;w)\cap H_\Ga^1(\Om;w)$ and Theorem \ref{01.22.T2} (ii),  we have 
	\begin{equation}\label{06.04.3}
		\int_\Om \nabla \Phi_k\cdot A\nabla \Phi_l\df z=\de_{kl}\la_k \mbox{ for } k,l\in \N^*, 
	\end{equation} 
	where $\de_{kl}$ is the Kronecker delta function, i.e., $\de_{kl}=1$ for $k=l$ and $\de_{kl}=0$ for $k\neq l$. 
	
	We now demonstrate that $\{\la_k^{-\f{1}{2}}\Phi_k\}_{k=1}^\iy$ forms an orthonormal basis of $H_\Ga^1(\Om;w)$.
	
	From \eqref{06.04.3}, it is straightforward to verify that $\{\la_k^{-\f{1}{2}}\Phi_k\}_{k=1}^\iy$ is an orthonormal subset of $H_\Ga^1(\Om;w)$. To prove it is a basis, assume by contradiction that there exists $0\neq u\in H_\Ga^1(\Om;w)$ such that
	\begin{equation*}
		(u,\Phi_k)_{H_\Ga^1(\Om;w)}=\int_\Om \nabla \Phi_k\cdot A\nabla u\df z=0 \mbox{ for all }k\in\N^*. 
	\end{equation*}
	Given that $\{\Phi_k\}_{k\in\N}$ is an orthonormal basis of $L^2(\Om)$, for $u\in H_\Ga^1(\Om;w)$, we can express
	\begin{equation}\label{06.04.4}
		u=\sum_{k=1}^\iy d_k\Phi_k,  \mbox{ where }  d_k=(u, \Phi_k)_{L^2(\Om)}, k\in\N.
	\end{equation}
	Then, from \eqref{01.22.3}, we have
	\begin{equation*}
		0=(u,\Phi_k)_{H_\Ga^1(\Om;w)}=\int_\Om \nabla \Phi_k\cdot A\nabla u\df z=\la_k\int_\Om \Phi_ku\df z =\la_kd_k,
	\end{equation*}
	which implies $d_k=0$. This leads to $u=0$, a contradiction. 
	
	From above, since $u\in H_\Ga^1(\Om;w)$,  we have 
	\begin{equation*} 
		u=\sum_{i=1}^\iy e_i\left(\la_i^{-\f{1}{2}}\Phi_k\right) \mbox{ in } H_\Ga^1(\Om;w), \mbox{ and } \|u\|_{H_\Ga^1(\Om;w)}^2=\sum_{i=1}^\iy e_i^2,
	\end{equation*}
	and 
	\begin{equation*}
		e_i=\int_\Om \nabla u\cdot A\left(\la_i^{-\f{1}{2}}\nabla\Phi_i\right)\df z=\la_i^{-\f{1}{2}}\int_\Om \nabla u\cdot A\nabla \Phi_i\df z=\la_i^\f{1}{2}\int_\Om u\Phi_i\df z=\la_i^\f{1}{2}u_i,
	\end{equation*} 
	hence,  $\nabla u=\sum_{i=1}^\iy u_i\nabla\Phi_i$. Moreover, $\|u\|_{H_\Ga^1(\Om;w)}=(\sum_{i=1}^\iy \la_iu_i^2)^\f{1}{2}$. 
	
	Let $u\in H^2(\Om;w)$. Take $\vp_n=\sum_{i=1}^n u_i\Phi_k$ for each $n\in\N^*$, then $u\in D(\mcA)$, and from $\Phi_i\in D(\mcA)\ (i\in\N^*)$ and Theorem \ref{01.22.T2} (ii), we get 
	\begin{equation*}
		\begin{split}
			(\mcA u, \mcA\vp_n)_{L^2(\Om)}
			&=\sum_{i=1}^n u_i\la_i \int_\Om (\mcA u)\Phi_i\df z=\sum_{i=1}^n u_i\la_i\int_\Om \nabla u\cdot A\nabla\vp\df z=\sum_{i=1}^n u_i\la_i\int_\Om u\mcA\Phi_i\df z\\
			&=\sum_{i=1}^n u_i\la_i^2\int_\Om u\Phi_i\df z=\sum_{i=1}^n u_i^2\la_i^2
		\end{split}
	\end{equation*}
	and $\|\mcA\vp_n\|_{L^2(\Om)}^2=\sum_{i=1}^n u_i^2\la_i^2$ we obtain
	\begin{equation*}
		\sum_{i=1}^nu_i^2\la_i^2\leq \|\mcA u\|_{L^2(\Om)}^2
	\end{equation*}
	for all $n\in\N$ by Cauchy inequality. This implies that $\sum_{i=1}^\iy u_i^2\la_i^2\leq \|\mcA u\|_{L^2(\Om)}^2<\iy$.

	Let $\sum_{i=1}^\iy u_i^2\la_i^2<\iy$. For each  $\vp\in C_0^\iy(\Om)$,  we have
	\begin{equation*}
		\begin{split}
			(\mcA u, \vp)_{L^2(\Om)}
			&=\int_\Om \nabla u\cdot A\nabla\vp\df z=\sum_{i=1}^\iy u_i \int_\Om \nabla\Phi_i\cdot A\nabla\vp\df z=\sum_{i=1}^\iy u_i\la_i\int_\Om \Phi_i\vp\df z
		\end{split}
	\end{equation*}
	in the sense of distributions. 
	Note that
	\begin{equation*}
		\int_\Om \left(\sum_{i=1}^n u_i\la_i\Phi_i\right)^2\df z=\sum_{i=1}^n u_i^2\la_i^2\leq \sum_{i=1}^\iy u_i^2\la_i^2<\iy \mbox{ for all } n\in\N,
	\end{equation*}
		i.e., $\sum_{i=1}^\iy u_i\la_i\Phi_i\in L^2(\Om)$. Hence
	\begin{equation*}
		(\mcA u, \vp)_{L^2(\Om)}=\left(\sum_{i=1}^\iy u_i\la_i\Phi_i, \vp\right)_{L^2(\Om)}.
	\end{equation*}
		This implies that $\mcA u=\sum_{i=1}^\iy u_i\la_i\Phi_i\in L^2(\Om)$, and
		hence
		\[
		\|\mcA u\|_{L^2(\Om)}\leq \left(\sum_{i=1}^\iy u_i^2\la_i^2\right)^\f{1}{2}.
		\]
		We complete the proof of this lemma.
\end{proof}

\subsection{Existence and uniqueness of weak solutions to equation \eqref{01.21.1}}

In this subsection, we establish the existence and uniqueness of weak solutions to equation \eqref{01.21.1}.

\begin{definition}\label{01.06.D1}
	Let $\vp^0\in H_\Ga^1(\Om;w),\vp^1\in L^2(\Om)$ and $f\in L^2(Q)$. 
	A function 
	\begin{equation*}
		\vp\in L^2(0,T; H_\Ga^1(\Om;w))\cap H^1(0,T; L^2(\Om))\cap H^2(0,T; H_\Ga^{-1}(\Om;w))
	\end{equation*}
	is a weak solution of the equation \eqref{01.21.1} with respect to $(\vp^0,\vp^1,f)$, provided
	
	(i) for each $v\in H_\Ga^1(\Om;w)$ and a.e. $t\in [0,T]$, we have
	\begin{equation*}
		\lg \pt_{tt}\vp, v\rg_{H_\Ga^{-1}(\Om;w), H_\Ga^1(\Om;w)}+(\vp, v)_{H_\Ga^1(\Om;w)}=(f,v)_{L^2(\Om)}, 
	\end{equation*}
	
	(ii) $\vp(0)=\vp^0$ and $\pt_t\vp(0)=\vp^1$. 
\end{definition}

The following lemma is \cite[Lemma 2.3 (p. 61)]{Bellassoued}. 

\begin{lemma}\label{11.30.L2}
	Let $(\mcM,g)$ be a $C^m$-Riemannian manifold with compact boundary $\pt \mcM$. Then there exists a $C^{m-1}$-vector field $\bs{n}$ such that 
	\begin{equation*}
		\bs{n}(x)=\nu(x), \ x\in\pt \mcM, \mbox{ and } |\bs{n}(x)|\leq 1, \ x\in \mcM, 
	\end{equation*}
	where $\nu$ is the unit outward normal vector to $\pt \mcM$. 
\end{lemma}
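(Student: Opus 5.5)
The approach I would take is the standard local-extension-plus-partition-of-unity construction, followed by a normalization that enforces the bound $|\bs{n}|\leq 1$. Throughout I would work with the regularity conventions of \cite{Bellassoued}: a $C^m$ manifold has $C^{m-1}$ coordinate changes of first derivatives and a metric $g$ that is at least $C^{m-1}$. Consequently the unit outward normal $\nu$ is a $C^{m-1}$ vector field along $\pt\mcM$, since it is obtained from the tangent space of $\pt\mcM$ (first derivatives of a boundary chart) by $g$-orthogonalization and $g$-normalization.

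\textbf{Step 1 (local extension in boundary charts).} Cover the compact set $\pt\mcM$ by finitely many boundary charts $(U_j,\phi_j)$, $j=1,\dots,J$, with $\phi_j(U_j)\s\{x=(x',x_n)\colon x_n\geq 0\}$ and $\phi_j(U_j\cap\pt\mcM)\s\{x_n=0\}$; compactness is what makes the cover finite. In each chart, write the components of $\nu$ at $(x',0)$ and extend them constantly in $x_n$, i.e.
\begin{equation*}
	N_j(x',x_n)=\nu(x',0).
\end{equation*}
This gives a $C^{m-1}$ vector field $N_j$ on $U_j$ (after shrinking $U_j$ if necessary) with $N_j=\nu$ on $U_j\cap\pt\mcM$.

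\textbf{Step 2 (gluing).} Add the interior open set $U_0=\mcM\setminus\pt\mcM$ and take a $C^{m}$ (in particular $C^{m-1}$) partition of unity $\{\rho_j\}_{j=0}^J$ subordinate to $\{U_j\}_{j=0}^J$. Define
\begin{equation*}
	\bs{n}_0=\sum_{j=1}^J\rho_jN_j .
\end{equation*}
On $\pt\mcM$ we have $\rho_0=0$, so $\sum_{j\geq1}\rho_j=1$ there, and every $N_j$ equals $\nu$ there; hence $\bs{n}_0=\nu$ on $\pt\mcM$, and $\bs{n}_0$ is $C^{m-1}$. However, $|\bs{n}_0|_g$ need not be $\leq 1$ away from the boundary: it is a convex combination of vectors whose length is only close to $1$, and it could also be larger.

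\textbf{Step 3 (normalization and cutoff).} By continuity and compactness of $\pt\mcM$, there is a neighborhood $V$ of $\pt\mcM$ on which $|\bs{n}_0|_g\geq \f{1}{2}$. On $V$ the field $\wt{\bs{n}}=\bs{n}_0/|\bs{n}_0|_g$ is $C^{m-1}$, because $g$ is $C^{m-1}$ and the square root is smooth away from $0$; moreover $\wt{\bs{n}}$ has unit length and still equals $\nu$ on $\pt\mcM$. Choose a $C^{m-1}$ cutoff $\eta$ with $0\leq\eta\leq1$, $\eta=1$ near $\pt\mcM$ and $\supp\eta\s V$, and set
\begin{equation*}
	\bs{n}=\eta\,\wt{\bs{n}} \quad\mbox{(extended by $0$ outside $V$)}.
\end{equation*}
Then $\bs{n}\in C^{m-1}$, $\bs{n}=\nu$ on $\pt\mcM$, and $|\bs{n}|\leq1$ everywhere.

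\textbf{Main obstacle.} The only delicate point is the regularity bookkeeping: I need to confirm that $\nu$, the metric, and the normalization all lose at most one derivative, so that the final field is genuinely $C^{m-1}$ and not $C^{m-2}$. This is also why I would avoid the alternative of taking the gradient of the distance function to $\pt\mcM$, which is only guaranteed to be $C^{m-2}$. For the application in this paper, where $\mcM=\ol\Om$ with the flat metric, everything is smooth and the field can even be written explicitly (e.g.\ $\bs{n}=\zeta(r)(0,\pm1)$ with a cutoff $\zeta$).
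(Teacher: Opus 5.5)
The paper does not prove this lemma. It is quoted from \cite[Lemma 2.3]{Bellassoued} and used only once, in the hidden-regularity step of Theorem~\ref{01.06.T1}.

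Your argument is a correct self-contained proof. The steps are:
\begin{enumerate}
\item extend the components of $\nu$ constantly in $x_n$ over finitely many product-shaped boundary charts;
\item glue with a partition of unity whose interior member $\rho_0$ vanishes on $\partial\mathcal{M}$;
\item renormalize on the open set $\{|\mathbf{n}_0|_g>\frac12\}$;
\item cut off with $0\le\eta\le 1$.
\end{enumerate}
This gives $\mathbf{n}=\nu$ on $\partial\mathcal{M}$ and $|\mathbf{n}|_g=\eta\le 1$. You are right that renormalizing is necessary: off the boundary, the constant extensions neither have unit $g$-length nor agree across charts.

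What your route buys over the citation is transparency about regularity. Nothing beyond $g$ itself is differentiated. In a boundary chart $\nu^i=-g^{in}/\sqrt{g^{nn}}$, and the normalization uses only $g$. So the field loses no derivatives relative to $g$: it is $C^{m-1}$ under your convention, and would even be $C^m$ for a $C^m$ metric on a smooth manifold. The citation buys only brevity.

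For the paper's actual use, even less is needed. The field appears only multiplied by the cutoff of \eqref{01.13.21}, which vanishes on $\mathbb{T}\times(0,\frac14)$, so $\mathbf{n}\equiv(0,1)$ suffices there. If you want one explicit global field on $\overline{\Omega}$, take $\mathbf{n}=(0,2r-1)$. Your $\zeta(r)(0,\pm 1)$ does not work as a single formula, since its sign would have to switch between the two boundary components.
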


We are now in a position to establish the existence of weak solutions to equation \eqref{01.21.1}.

\begin{theorem}\label{01.06.T1}
	Under Assumption \ref{Assumption (H)}, the equation \eqref{01.21.1} with respect to $(\vp^0,\vp^1,f)$ has a unique weak solution 
	\begin{equation*}
		\begin{split}
			\vp\in L^2(0,T; H_\Ga^1(\Om;w))\cap H^1(0,T; L^2(\Om))\cap H^2(0,T; H_\Ga^{-1}(\Om;w))
		\end{split}
	\end{equation*} 
	satisfies the following estimate
	\begin{equation}\label{01.13.3}
		\begin{split} 
			&\esssup_{t\in [0,T]}\left(\|\vp(t)\|_{H_\Ga^1(\Om;w)}+\|\pt_t\vp(t)\|_{L^2(\Om)}\right)+\|\pt_{tt}\vp\|_{L^2(0,T; H_\Ga^{-1}(\Om;w))}+\left\|\f{\pt\vp}{\pt \nu}\right\|_{L^2(0,T; L^2(\Ga))}\\
			&\leq C\left(\|\vp^0\|_{H_\Ga^1(\Om;w)}+\|\vp^1\|_{L^2(\Om)}+\|f\|_{L^2(Q)}\right),
		\end{split} 
	\end{equation}
	where the constant $C>0$ depending only on $\al$ and $T$. 
	
	Moreover, if we further assume $\vp^0\in D(\mcA), \vp^1\in  H_\Ga^1(\Om;w)$ and $f\in H^1(0,T; L^2(\Om))$, then the weak solution of \eqref{01.21.1} with respect to $(\vp^0,\vp^1,f)$ satisfy
	\begin{equation*}
		\vp\in L^2(0,T; D(\mcA))\cap H^1(0,T; H_\Ga^1(\Om;w))\cap H^2(0,T; L^2(\Om)), 
	\end{equation*}
	and we have the estimate 
	\begin{equation}\label{01.13.20}
		\begin{split}
			&\esssup_{t\in [0,T]}\left(\|\vp(t)\|_{H^2(\Om_{\f{1}{2}})}+\|\vp(t)\|_{D(\mcA)}+\|\pt_t\vp(t)\|_{H_\Ga^1(\Om;w)}+\|\pt_{tt}\vp(t)\|_{L^2(\Om)}\right)\\
			&\hspace{4.5mm}+\left\|\f{\pt\vp}{\pt\nu}\right\|_{H^1(0,T; L^2(\Ga))}\\
			&\leq C\left(\|\vp^0\|_{D(\mcA)}+\|\vp^1\|_{H_\Ga^1(\Om;w)}+\|f\|_{H^1(0,T; L^2(\Om))}\right),
		\end{split}
	\end{equation}
	where the constant $C>0$ depending only on $\al$ and $T$. 
\end{theorem}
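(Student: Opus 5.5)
We plan to prove Theorem \ref{01.06.T1} by a Faedo--Galerkin argument built on the spectral basis $\{\Phi_n\}$ of Notation \ref{01.22.N1}, followed by energy estimates and a separate multiplier argument for the boundary trace.

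\emph{Galerkin approximation.} Set $\vp_m(t)=\sum_{i=1}^m c_i^m(t)\Phi_i$. The coefficients solve the decoupled ODEs
\[
\ddot c_i^m+\la_i c_i^m=(f(t),\Phi_i)_{L^2(\Om)},
\]
with initial data given by the projections of $\vp^0$ and $\vp^1$. Testing with $\pt_t\vp_m$ and using Lemma \ref{06.28.L1} (so that $\|\vp_m\|_{H_\Ga^1(\Om;w)}^2=\sum\la_i (c_i^m)^2$) and Gronwall's inequality gives the uniform bound on $\sup_t(\|\vp_m\|_{H_\Ga^1(\Om;w)}+\|\pt_t\vp_m\|_{L^2})$.

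\emph{Second time derivative and limit.} Duality in $H_\Ga^{-1}(\Om;w)$ bounds $\pt_{tt}\vp_m$ in $L^2(0,T;H^{-1}_\Ga(\Om;w))$. Weak-$*$ compactness then lets us pass to the limit in Definition \ref{01.06.D1}(i), and the initial conditions follow in the standard way, as in Evans, Chapter 7.2. For uniqueness, take $\vp^0=\vp^1=f=0$ and use the Ladyzhenskaya test function $v(t)=\int_t^s\vp(\tau)\df\tau$. This is admissible because $H_\Ga^1(\Om;w)$ is a Hilbert space and the pairing is symmetric.

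\emph{Regular solutions.} For the regular case, apply the same estimates to $\pt_t\vp_m$, which satisfies the same system with data $(\vp^1,\mcA\vp^0 \text{ projected}+f(0),\pt_t f)$. This gives $\pt_t\vp\in L^\iy(0,T;H_\Ga^1(\Om;w))$ and $\pt_{tt}\vp\in L^\iy(0,T;L^2)$. Then $\mcA\vp=f-\pt_{tt}\vp\in L^\iy(0,T;L^2)$ yields the $D(\mcA)$ bound via Lemma \ref{06.28.L1}. The interior bound on $\|\vp(t)\|_{H^2(\Om_{1/2})}$ follows because on $\{r>1/2\}$ the operator is uniformly elliptic with the Dirichlet condition on $\Ga$. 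Standard elliptic regularity up to $\Ga$ applies after a cutoff $\zeta(r)$, with Lemma \ref{01.22.L7} and Theorem \ref{02.01.T1} controlling lower-order terms.

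\emph{Hidden regularity of the normal derivative (main obstacle).} The most delicate point is the trace estimate $\|\pt\vp/\pt\nu\|_{L^2(0,T;L^2(\Ga))}$ for merely weak solutions. We first prove it for regular solutions, then extend by density, since both sides are continuous in the data.

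We use Lemma \ref{11.30.L2}, or simply the field $\bs n=\zeta(r)(0,1)$ with $\zeta$ a cutoff equal to $1$ near $r=1$ and vanishing for $r<1/2$. The vector field is then supported away from the degenerate boundary $\Ga^*$, so no weighted boundary terms arise. Multiply the equation by $\bs n\cdot\nabla\vp$ and integrate over $Q$, integrating by parts with Corollary \ref{03.01.C6} on the region $r>1/2$, where $A$ is smooth and nondegenerate.

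On $\Ga$ we have $\vp=0$, so $\nabla\vp=(\pt_r\vp)\nu$ there and $\pt_\theta\vp=0$ by Lemma \ref{02.02.L1}(i). The boundary term therefore reduces to $\f12\iint_\Ga w(\pt_r\vp)^2$, with $w=1$ on $\Ga$. The interior terms are bounded by $C\iint_Q(\pt_t\vp)^2+\nabla\vp\cdot A\nabla\vp$, again because $A$ is uniformly elliptic on $\supp\zeta$. The time-boundary terms $\int_\Om\pt_t\vp\,\bs n\cdot\nabla\vp\,\big|_0^T$ and the $f$-term are controlled by the energy.

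Differentiating in $t$ gives the $H^1(0,T;L^2(\Ga))$ bound in \eqref{01.13.20}. What must be checked carefully is that the integrations by parts are justified near $r=1/2$ only. This holds because $\vp(t)\in H^2(\Om_{1/2})$ for regular data.
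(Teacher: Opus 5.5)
Your proposal is correct and follows essentially the same route as the paper: spectral Galerkin approximation, energy/Gronwall and duality bounds, weak-$*$ limits, Ladyzhenskaya's test function $\int_t^s\vp(\tau)\,\mathrm{d}\tau$ for uniqueness, the time-differentiated Galerkin system together with $\mcA\vp=f-\pt_{tt}\vp$ and a cutoff elliptic argument for the regular case, and a multiplier $\zeta\,\bs{n}\cdot\nabla\vp$ supported away from $\Ga^*$, which is extended by density and then applied to $\pt_t\vp$ for the $H^1(0,T;L^2(\Ga))$ bound. The only slip is a sign: the second initial datum for $\pt_t\vp$ is $f(0)-\mcA\vp^0$, not $\mcA\vp^0+f(0)$, and this does not affect any of the estimates.
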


\begin{proof}
	We prove this theorem by the following steps. 
	
	{\it Step 1}. Galerkin's method.
	
	Let $2\leq k\in \N^*$. Define
	\begin{equation}\label{01.13.8}
		\vp^k=\sum_{n=1}^k \vp_n^k(t)\Phi_n(x), \mbox{ and } f^k=\sum_{n=1}^k f_n^k(t)\Phi_n(x).
	\end{equation}
	where $\vp_n^k(t), n=1,\cdots, k$ is the solution of the following system
	\begin{equation}\label{01.13.4}
		\f{\df^2}{\df t^2}\vp_n^k(t)+\la_n\vp_n^k(t)=f_n^k(t), \ n=1,\cdots, k
	\end{equation}
	with 
	\begin{equation}\label{01.13.5}
		\vp_n^k(0)=(\vp^0,\Phi_n)_{L^2(\Om)},\ \f{\df \vp_n^k}{\df t}(0)=(\vp^1,\Phi_n)_{L^2(\Om)},\ f_n^k=(f,\Phi_n)_{L^2(\Om)}, \ n=1,\cdots, k. 
	\end{equation}
	It is well-known that the system \eqref{01.13.4} and \eqref{01.13.5} has a unique solution $(\vp_1^k,\cdots,\vp_k^k)$ for $t\in [0,T]$. We note that \eqref{01.13.4} is equivalent to the following equality
	\begin{equation}\label{01.13.6}
		(\pt_{tt}\vp^k, \Phi_n)_{L^2(\Om)}+(\vp^k, \Phi_n)_{H_\Ga^1(\Om;w)}=(f^k,\Phi_n)_{L^2(\Om)}, \ n=1,\cdots, k. 
	\end{equation}
	
	{\it Step 2}. Energy estimate.
	
	Multiplying \eqref{01.13.6} by $\f{\df}{\df t}\vp_n^k(t)$, summing $n=1,\cdots,k$, then 
	\begin{equation*}
		(\pt_{tt}\vp^k,\pt_t\vp^k)_{L^2(\Om)}+(\vp^k, \pt_t\vp^k)_{H_\Ga^1(\Om;w)}=(f^k,\pt_t\vp^k)_{L^2(\Om)}, \mbox{ for a.e.}\ t\in [0,T]. 
	\end{equation*}
	Note that 
	\begin{equation*}
		(\pt_{tt}\vp^k,\pt_t\vp^k)_{L^2(\Om)}=\f{1}{2}\f{\df }{\df t}\|\pt_t\vp^k\|_{L^2(\Om)}^2,\quad (\vp^k,\pt_t\vp^k)_{H_\Ga^1(\Om;w)}=\f{1}{2}\f{\df}{\df t}\|\vp^k\|_{H_\Ga^1(\Om;w)}^2,
	\end{equation*}
	we get
	\begin{equation*}
		\begin{split}
			\f{\df}{\df t}\left(\|\pt_t\vp^k\|_{L^2(\Om)}^2+\|\vp^k\|_{H_\Ga^1(\Om;w)}^2\right)\leq \|f^k\|_{L^2(\Om)}^2+\|\pt_t\vp^k\|_{L^2(\Om)}^2, 
		\end{split}
	\end{equation*}
	and then 
	\begin{equation}\label{01.13.7}
		\begin{split} 
			\|\pt_t\vp^k\|_{L^2(\Om)}^2+\|\vp^k\|_{H_\Ga^1(\Om;w)}^2
			&\leq e^t\left(\int_0^t\|f^k\|_{L^2(\Om)}^2\df t+\|\pt_t\vp^k(0)\|_{L^2(\Om)}^2+\|\vp^k(0)\|_{H_\Ga^1(\Om;w)}^2\right)\\
			&\leq e^T\left(\|f\|_{L^2(Q)}^2+\|\vp^1\|_{L^2(\Om)}^2+\|\vp^0\|_{H_\Ga^1(\Om;w)}^2\right) 
		\end{split} 
	\end{equation}
	for all $t\in [0,T]$ by the Gronwall's inequality and Lemma \ref{06.28.L1}. 
	
	For each $v\in H_\Ga^1(\Om;w)$ with $ \|v\|_{H_\Ga^1(\Om;w)}\leq 1$, we write $v=v^1+v^2$ with $v^1\in \Span\{\Phi_n\}_{n=1}^k$ and $(v^2,\Phi_n)_{L^2(\Om)}=0$ for all $n=1,\cdots,k$, then, from  \eqref{01.13.8} and \eqref{01.13.6}, we get
	\begin{equation*}
		\begin{split}
			\llg \pt_{tt}\vp^k, v\rrg_{H_\Ga^{-1}(\Om;w),H_\Ga^1(\Om;w)}=(\pt_{tt}\vp^k,v^1)_{L^2(\Om)}=(f^k,v^1)_{L^2(\Om)}-(\vp^k, v^1)_{H_\Ga^1(\Om;w)}. 
		\end{split}
	\end{equation*}
	Which together with \eqref{01.13.7} we obtain 
	\begin{equation*}
		\|\pt_{tt}\vp^k\|_{H_\Ga^{-1}(\Om;w)}\leq C\left(\|f^k\|_{L^2(\Om)}+\|\vp^k\|_{H_\Ga^1(\Om;w)}\right)
	\end{equation*}
	by Remark \ref{01.21.R1} and $\|v^1\|_{H_\Ga^1(\Om;w)}\leq 1$, where the constant $C>0$ depends only on $\al$. Hence, from \eqref{01.13.7}, we get
	\begin{equation}\label{01.13.9}
		\|\pt_{tt}\vp^k\|_{L^2(0,T; H_\Ga^{-1}(\Om;w))}\leq C\left(\|f\|_{L^2(Q)}+\|\vp^1\|_{L^2(\Om)}+\|\vp^0\|_{H_\Ga^1(\Om;w)}\right), 
	\end{equation}
	where the constant $C>0$ depends only on $\al$ and $T$. 
	
	{\it Step 3}. Approximation, or weak convergence. 
	
	From \eqref{01.13.7} and \eqref{01.13.9}, there exists
	\begin{equation*}
		\psi\in L^2(0,T; H_\Ga^1(\Om;w))\cap H^1(0,T; L^2(\Om))\cap H^2(0,T; H_\Ga^{-1}(\Om;w))
	\end{equation*} 
	such that 
	\begin{equation}\label{01.13.10}
		\begin{split}
			\vp^k
			&\ra \psi \mbox{ weak star in } L^\iy(0,T; H_\Ga^1(\Om;w)), \\
			\pt_t\vp^k
			&\ra \pt_t\psi \mbox{ weak star in } L^\iy(0,T; L^2(\Om)),\\
			\pt_{tt}\vp^k
			&\ra \pt_{tt}\psi \mbox{ weakly in } L^2(0,T; H_\Ga^{-1}(\Om;w))
		\end{split}
	\end{equation}
	by abstract subsequence. Hence
	\begin{equation}\label{01.13.11}
		\begin{split}
			&\esssup_{t\in [0,T]}\left(\|\pt_t\psi\|_{L^2(\Om)}+\|\psi\|_{H_\Ga^1(\Om;w)}\right) +\|\pt_{tt}\psi \|_{L^2(0,T; H_\Ga^{-1}(\Om;w))}\\
			&\leq C\left(\|f\|_{L^2(Q)}+\|\vp^1\|_{L^2(\Om)}+\|\vp^0\|_{H_\Ga^1(\Om;w)}\right), 
		\end{split}
	\end{equation}
	where the constant $C>0$ depends only on $\al$ and $T$. Moreover, we have
	\begin{equation}\label{01.13.12}
		\psi\in C([0,T]; L^2(\Om)),\mbox{ and } \pt_t\psi \in C([0,T]; H_\Ga^{-1}(\Om;w)). 
	\end{equation}
	
	{\it Step 4}. Existence of weak solution to equation \eqref{01.21.1}. 
	
	We show $\psi$ is a solution of \eqref{01.21.1} with respect to $(\vp^0,\vp^1,f)$. 
	
	Let $2\leq l\in\N^*$. Taking $h\in C^1([0,T]; H_\Ga^1(\Om;w))$ as the form
	\begin{equation}\label{01.13.14}
		h=\sum_{n=1}^l h_n(t)\Phi_n(x), \mbox{ with } h_n  \mbox{ is a smooth function on } [0,T], 
	\end{equation}
	multiplying $h_n(t)$ on the both sides of \eqref{01.13.6}, summing $n=1,\cdots, l$, integrating on $[0,T]$,  then, for all $k\geq l$, we have
	\begin{equation}\label{01.13.16}
		\begin{split}
			\int_0^T\llg \pt_{tt}\vp^k, h\rrg_{H_\Ga^{-1}(\Om;w),H_\Ga^1(\Om;w)}\df t+\int_0^T (\vp^k, h)_{H_\Ga^1(\Om;w)}\df t=\int_0^T (f^k,h)_{L^2(\Om)}\df t. 
		\end{split}
	\end{equation}
	Which together with \eqref{01.13.10} we get
	\begin{equation}\label{01.13.13}
		\int_0^T\lg \pt_{tt}\psi, h\rg_{H_\Ga^{-1}(\Om;w), H_\Ga^1(\Om;w)}\df t+\int_0^T(\psi, h)_{H_\Ga^1(\Om;w)}\df t=\int_0^T(f,h)_{L^2(\Om)}\df t. 
	\end{equation}
	Note that the set of the functions $h$   defined in \eqref{01.13.14} is dense in $L^2(0,T; H_\Ga^1(\Om;w))$, hence \eqref{01.13.13} also hold for all $h\in L^2(0,T; H_\Ga^1(\Om;w))$. For each $t\in (0,T)$, for each $\e\in \f{1}{4}\min\{t, T-t\}$, choose $\zeta_\de\in C_0^\iy(0,T)\ (0<\de<\e)$ such that 
	\begin{equation*}
		\zeta_\de=1 \mbox{ on } (t-\e,t+\e), \quad \zeta_\de=0 \mbox{ on } (t-\e-\de,t+\e+\de),
	\end{equation*}
	then, for each $v\in H_\Ga^1(\Om;w)$, we have $\zeta_\de v\in L^2(0,T; H_\Ga^1(\Om;w))$ and 
	\begin{equation*} 
		\int_0^T\zeta_\de\lg \pt_{tt}\psi, v\rg_{H_\Ga^{-1}(\Om;w), H_\Ga^1(\Om;w)}\df t+\int_0^T\zeta_\de(\psi, v)_{H_\Ga^1(\Om;w)}\df t=\int_0^T\zeta_\de(f,v)_{L^2(\Om)}\df t. 
	\end{equation*}
	by \eqref{01.13.13}. Letting $\de\ra 0$, we get
	\begin{equation*}
		\int_{t-\e}^{t+\e}\lg\pt_{tt}\psi, v\rg_{H_\Ga^{-1}(\Om;w), H_\Ga^1(\Om;w)}\df t+\int_{t-\e}^{t+\e}(\psi, v)_{H_\Ga^1(\Om;w)}\df t=\int_{t-\e}^{t+\e}(f,v)_{L^2(\Om)}\df t. 
	\end{equation*}
	From the Lebesgue point theorem, letting $\e\ra 0$, we get 
	\begin{equation*}
		\lg\pt_{tt}\psi, v\rg_{H_\Ga^{-1}(\Om;w), H_\Ga^1(\Om;w)}+(\psi, v)_{H_\Ga^1(\Om;w)}=(f,v)_{L^2(\Om)} \mbox{ for a.e.}\  t\in [0,T].
	\end{equation*} 
	This proves Definition \ref{01.06.D1} (i). 
	
	Next, we verify $\psi(0)=\vp^0$ and $\pt_t\psi(0)=\vp^1$. 
	
	For any $h\in C^2([0,T]; H_\Ga^1(\Om;w))$ with $h(T)=\pt_th(T)=0$, from \eqref{01.13.13} and \eqref{01.13.12}, we have
	\begin{equation}\label{01.13.15}
		\begin{split}
			&\iint_Q\psi \pt_{tt}h\df z\df  t+\iint_Q \nabla \psi\cdot A\nabla h\df z\df t\\
			&=\iint_Q fh\df z\df t-(\psi(0),\pt_th(0))_{L^2(\Om)}+\llg \pt_t\psi(0),h(0)\rrg_{H_\Ga^{-1}(\Om;w),H_\Ga^1(\Om;w)}. 
		\end{split}
	\end{equation}
	Similarly from \eqref{01.13.16} we get
	\begin{equation*}
		\begin{split}
			&\iint_Q\vp^k \pt_{tt}h\df z\df  t+\iint_Q \nabla \vp^k\cdot A\nabla h\df z\df t\\
			&=\iint_Q fh\df z\df t-(\vp^k(0),\pt_th(0))_{L^2(\Om)}+(\pt_t\vp^k, h(0))_{L^2(\Om)}, 
		\end{split}
	\end{equation*}
	this together with \eqref{01.13.5} and \eqref{01.13.10} we obtain 
	\begin{equation*}
		\begin{split}
			&\iint_Q\psi \pt_{tt}h\df z\df  t+\iint_Q \nabla \psi\cdot A\nabla h\df z\df t\\
			&=\iint_Q fh\df z\df t-(\vp^0,\pt_th(0))_{L^2(\Om)}+(\vp^1,h(0))_{L^2(\Om)}. 
		\end{split}
	\end{equation*}
	Which together with \eqref{01.13.15} we deduce 
	\begin{equation*}
		\psi(0)=\vp^0, \mbox{ and } \pt_t\psi(0)=\vp^1. 
	\end{equation*}
	This prove Definition \eqref{01.06.D1} (ii).

	{\it Step 5}. Uniqueness.
	
	We only need to show that the weak solution to equation \eqref{01.21.1} with respect to $(0,0,0)$ is zero function. 
	Let $\vp$ be the solution of \eqref{01.21.1} with respect to $(\vp^0,\vp^1,f)=(0,0,0)$. Fix $s\in [0,T]$, taking 
	\begin{equation*}
		h(t)=
		\begin{cases}
			\int_t^s\vp(\tau)\df\tau, & t\in [0,s],\\
			0, &t\in [s,T], 
		\end{cases}
	\end{equation*}
	then $h(t)\in H_\Ga^1(\Om;w)$ for a.e. $t\in [0,T]$, and from Definition \ref{01.06.D1} (i) we have 
	\begin{equation*}
		\int_0^s\llg\pt_{tt}\vp, h\rrg_{H_\Ga^{-1}(\Om;w),H_\Ga^1(\Om;w)}\df t+\int_0^sw\nabla \vp \cdot \nabla h\df t=0. 
	\end{equation*}
	Note that $\pt_t\vp(0)=h(s)=0$, integrating by parts with respect to $t$,  we get 
	\begin{equation*}
		\int_0^s(\pt_t\vp,\vp)_{L^2(\Om)}\df t-\int_0^s(\pt_th,h)_{H_\Ga^1(\Om;w)}\df t=0
	\end{equation*}
	by $\pt_th=-\vp\ (0\leq t\leq s)$, hence, 
	\begin{equation*}
		\f{1}{2}\int_0^s\f{\df}{\df t}\|\vp(t)\|_{L^2(\Om)}^2\df t-\f{1}{2}\int_0^s\f{\df}{\df t}\|h\|_{H_\Ga^1(\Om;w)}^2\df t=0. 
	\end{equation*}
	Therefore, from $\vp(0)=0$ and $h(s)=0$, we get
	\begin{equation*}
		\|\vp(s)\|_{L^2(\Om)}^2+\|h(0)\|_{H_\Ga^1(\Om;w)}^2=\|\vp(0)\|_{L^2(\Om)}^2+\|h(s)\|_{H_\Ga^1(\Om;w)}^2=0. 
	\end{equation*}
	i.e., $\vp(s)=0$. This shows that $\vp=0$ by the arbitrary of $s\in [0,T]$. 
	
	From the uniqueness and \eqref{01.13.11} we get the  1th-3th estimate in \eqref{01.13.3}. 
	
	{\it Step 6}. Improved regularity.
	
	Since $f\in H^1(0,T; L^2(\Om))$, from   \eqref{01.13.4} we get
	\begin{equation*}
		\f{\df^2}{\df t^2}\left(\f{\df\vp_n^k}{\df t}\right)+\la_n\f{\df\vp_n^k}{\df t}(t)=\f{\df f_n^k}{\df t}(t), \ n=1,\cdots, k.
	\end{equation*}
	Denote $\wt \vp^k=\pt_t\vp^k$, 
	from  \eqref{01.13.6}, we have
	\begin{equation*}
		(\pt_{tt}\wt\vp^k, \Phi_n)_{L^2(\Om)}+(\wt\vp^k, \Phi_n)_{H_\Ga^1(\Om;w)}=(\pt_tf^k,\Phi_n)_{L^2(\Om)}, \ n=1,\cdots, k. 
	\end{equation*}
	Multiplying $\f{\df^2\vp_n^k}{\df t^2}(t)$, summing $n=1,\cdots,k$, we discover
	\begin{equation*}
		(\pt_{tt}\wt\vp^k, \pt_t\wt \vp^k)_{L^2(\Om)}+(\wt \vp^k, \pt_t\wt\vp^k)_{H_\Ga^1(\Om;w)}=(\pt_tf^k, \pt_t\wt\vp_n^k)_{L^2(\Om)}. 
	\end{equation*}
	Similar to \eqref{01.13.7} in Step 2, we obtain 
	\begin{equation*}
		\begin{split}
			&\esssup_{t\in [0,T]}\left(\|\pt_t\wt \vp^k\|_{L^2(\Om)}^2+\|\wt\vp^k\|_{H_\Ga^1(\Om;w)}^2\right) \\
			&\leq e^T\left(\|\pt_tf^k\|_{L^2(Q)}^2+\|\pt_t\wt\vp^k(0)\|_{L^2(\Om)}^2+\|\wt\vp^k(0)\|_{H_\Ga^1(\Om;w)}^2\right). 
		\end{split}
	\end{equation*}
	Note that from \eqref{01.13.5} we have  $\wt\vp^k(0)=\pt_t\vp^k(0)=\sum_{n=1}^k (\vp^1,\Phi_n)_{L^2(\Om)}\Phi_n$, then, from  Lemma \ref{06.28.L1}, we get
	\begin{equation*}
		\|\wt\vp^k(0)\|_{H_\Ga^1(\Om;w)}^2=\sum_{n=1}^k (\vp^1,\Phi_n)_{L^2(\Om)}^2\la_n\leq \sum_{n=1}^\iy(\vp^1,\Phi_n)_{L^2(\Om)}^2\la_n= \|\vp^1\|_{H_\Ga^1(\Om;w)};
	\end{equation*}  and from \eqref{01.13.8} and  \eqref{01.13.4} and Lemma \ref{06.28.L1} we have 
	\begin{equation*}
		\begin{split}
			\pt_t\wt\vp^k(0)=\pt_{tt}\vp^k(0)
			&=\sum_{n=1}^k \f{\df^2\vp_n^k}{\df t^2}(0)\Phi_n=\sum_{n=1}^k f_n^k(0)\Phi_n-\sum_{n=1}^k\vp_n^k(0)\la_n\Phi_n 
		\end{split}
	\end{equation*}
	and 
	\begin{equation*}
		\|\pt_t\wt\vp^k(0)\|_{L^2(\Om)}^2\leq 2\|f^k(0)\|_{L^2(\Om)}^2+2\|\vp^0\|_{D(\mcA)}^2\leq C\|f\|_{H^1(0,T; L^2(\Om))}^2+2\|\vp^0\|_{D(\mcA)}^2, 
	\end{equation*}
	where the second inequality we use  \cite[Theorem 2 (iii) in Chapter 5.9.2, p. 302]{Evans}, and the constant $C>0$ depends only on $T$. Hence, 
	\begin{equation}\label{01.13.17}
		\begin{split} 
			&\esssup_{t\in [0,T]}\left(\|\pt_t\wt \vp^k\|_{L^2(\Om)}^2+\|\wt\vp^k\|_{H_\Ga^1(\Om;w)}^2\right) \\
			&\leq C\left(\|f\|_{H^1(0,T; L^2(\Om))}^2+\|\vp^0\|_{D(\mcA)}^2+\|\vp^1\|_{H_\Ga^1(\Om;w)}^2\right)
		\end{split} 
	\end{equation}
	by Lemma \ref{06.28.L1}, where the constant $C>0$ depends only on $T$. 
	
	Multiplying $\la_n\vp_n^k(t)$ on the both sides of \eqref{01.13.6}, summing $n=1,\cdots, k$, note that  $\vp^k,\mcA\vp^k=\sum_{n=1}^k \vp_n^k(t) \la_n\Phi_n\in H^2(\Om;w)\cap H_\Ga^1(\Om;w)$ for a.e. $t\in [0,T]$ by \eqref{01.13.8},  from Theorem \ref{01.22.T2} (ii),  we get
	\begin{equation*}
		\begin{split}
			\|\mcA\vp^k\|_{L^2(\Om)}^2=(\vp^k, \mcA\vp^k)_{H_\Ga^1(\Om;w)}=(f^k-\pt_{tt}\vp^k,\mcA \vp^k)_{L^2(\Om)}, 
		\end{split}
	\end{equation*}
	then, from \eqref{01.13.17} and \cite[Theorem 2 (iii) in Chapter 5.9.2, p. 302]{Evans}, we deduce
	\begin{equation}\label{01.13.18}
		\begin{split} 
			\|\mcA\vp^k(t)\|_{L^2(\Om)}
			&\leq 2\|f^k(t)\|_{L^2(\Om)}+2\|\pt_{tt}\vp^k(t)\|_{L^2(\Om)}\\
			&\leq C\left(\|f\|_{H^1(0,T; L^2(\Om))}+\|\vp^0\|_{D(\mcA)}+\|\vp^1\|_{H_\Ga^1(\Om;w)}\right), 
		\end{split} 
	\end{equation}
	where the constant $C>0$ depends only on $T$. 
	
	Choosing 
	\begin{equation}\label{01.13.21}
		\zeta\in C_0^\iy(\R^N), \ 0<\zeta<1,
	\end{equation}
	such that 
	\begin{equation*}
		\zeta=0 \mbox{ on } \mbT\ts \left(0,\f{1}{4}\right), \quad \zeta=1 \mbox{ on } \mbT\ts \left(\f{1}{2},1\right),
	\end{equation*}
	and 
	\begin{equation*}
		|\nabla\zeta|\leq C  \mbox{ and } \left|\f{\pt^2\zeta}{\pt x_i\pt x_j}\right|\leq C \mbox{ on }\R^N,
	\end{equation*}
	where the constants $C>0$ are absolute. 
	Note that $z=\zeta\vp^k$ is the solution of the following equation 
	\begin{equation*}
		\begin{cases} 
			\mcA z=\zeta f^k-\zeta\pt_{tt}\vp^k-\vp^k\mcA\zeta-2\nabla\zeta\cdot A\nabla \vp^k,  &\mbox{in } \mbT\ts \left(\f{1}{4},1\right),\\
			z=0, &\mbox{on }\mbT\ts \left\{\f{1}{4},1\right\}, 
		\end{cases}
	\end{equation*}
	this is a uniformly elliptic equation on $\mbT\ts (\f{1}{4},1)$, hence, from \cite[Theorem 1 (p. 327)  in Chapter 6.3.1 and Theorem 4 (p. 334) in Chapter 6.3.2]{Evans} and \eqref{01.13.3} and \eqref{01.13.7}, we have 
	\begin{equation}\label{01.13.19}
		\begin{split} 
			\|\vp^k(t)\|_{H^2(\Om_{\f{1}{2}})}
			&\leq C\left(\|f^k(t)\|_{L^2(\Om)}+\|\pt_{tt}\vp^k(t)\|_{L^2(\Om)}+\|\vp^k(t)\|_{H_\Ga^1(\Om;w)}\right)\\
			&\leq C\left(\|f\|_{H^1(0,T; L^2(\Om))}+\|\vp^0\|_{D(\mcA)}+\|\vp^1\|_{H_\Ga^1(\Om;w)}\right), 
		\end{split} 
	\end{equation}
	where the constants $C>0$ depend only on $\al$ and $T$. 
	This, combing   \eqref{01.13.10} and $\vp=\psi$,  we have proved the 1th-4th estimate in \eqref{01.13.20}.

	{\it Step 7}. Hidden regularity. 
	
	We will prove the 4th estimate in \eqref{01.13.3} and the 5th estimate in \eqref{01.13.20}.  
	
	Let $\vp^0\in D(\mcA), \vp^1\in H_\Ga^1(\Om;w)$ and $f\in H^1(0,T; L^2(\Om))$. Since $\vp$ is the weak solution of \eqref{01.21.1} with respect to $(\vp^0,\vp^1,f)$, we obtain $\Psi=\pt_t\vp\in L^2(0,T; H_\Ga^1(\Om;w))$ is the weak solution of the following system
	\begin{equation*}
		\begin{cases}
			\pt_{tt}\Psi+\mcA\Psi=\pt_tf, &\mbox{in }Q, \\
			\Psi=0, &\mbox{on } \Ga\ts (0,T), \\
			\f{\pt \Psi}{\pt\nu_A}=0, &\mbox{on }\Ga^*\ts (0,T), \\
			\Psi(0)=\vp^1, \pt_t\Psi(0)=f(0)-\mcA\vp^0, &\mbox{in }\Om
		\end{cases}
	\end{equation*}
	by \eqref{01.13.20}. Suppose that the 4th estimate in \eqref{01.13.3} holds, then the 5th estimate in \eqref{01.13.20} also follows. So, we only need to prove the 4th estimate in \eqref{01.13.3}.  
	
	To prove the 4th estimate in \eqref{01.13.3}, by the density argument, we only need to prove the case: $y^0\in D(\mcA), y^1\in H_\Ga^1(\Om;w)$ and $f\in H^1(0,T; L^2(\Om))$. 
	
	Let $\zeta$ be defined in \eqref{01.13.21}. Multiplying $\zeta\bs{n}\cdot \nabla \vp$ on the both sides of \eqref{01.21.1}, from \eqref{01.13.20}, integrating on $Q$ (indeed, on $(\mbT\ts (\f{1}{4},1))\ts (0,T)$), integration by parts, we have
	\begin{equation*}
		\begin{split}
			A_1+A_2
			&=\iint_Q (\pt_{tt}\vp)(\zeta\bs{n}\cdot \nabla \vp)\df z\df t-\iint_Q [\Div(A\nabla \vp)](\zeta\bs{n}\cdot \nabla \vp)\df z\df t\\
			&=\iint_Q f(\zeta\bs{n}\cdot \nabla \vp)\df z\df t=A_3. 
		\end{split}
	\end{equation*}
	From $\pt_t\vp=0$ on $\Ga\ts (0,T)$ and $\zeta=0$ on $\mbT\ts [0,\f{1}{4}]$ and the 1th-2th estimate in \eqref{01.13.3}, we have
	\begin{equation*}
		\begin{split}
			A_1
			&=\int_\Om (\pt_t\vp)(\zeta\bs{n}\cdot \nabla\vp)\df z\bigg|_{t=0}^{t=T}+\f{1}{2}\iint_Q (\pt_t\vp)^2\Div(\zeta\bs{n})\df z\df t\\
			&\leq C\esssup_{t\in [0,T]}\left(\|\pt_t\vp(t)\|_{L^2(\Om)}^2+\|\vp(t)\|_{H_\Ga^1(\Om;w)}^2\right)+C\|\pt_t\vp\|_{L^2(Q)}^2\\
			&\leq C\left(\|\vp^0\|_{H_\Ga^1(\Om;w)}^2+\|\vp^1\|_{L^2(\Om)}^2+\|f\|_{L^2(Q)}^2\right);
		\end{split}
	\end{equation*}
	from Lemma \ref{11.30.L2} we have (with $\bs{n}=(n_\theta,n_r)$)
	\begin{equation*}
		\begin{split}
			A_2
			&=-\f{1}{2}\iint_{\Ga\ts (0,T)}\left(\f{\pt \vp}{\pt \nu}\right)^2\df S\df t+\iint_Q (A\nabla\vp)\cdot [(D(\zeta\bs{n}))\nabla \vp]\df z\df t\\
			&\hspace{4.5mm}-\f{1}{2}\iint_Q (\nabla\vp\cdot A\nabla\vp)\Div(\zeta \bs{n})\df z\df t-\f{\al}{2}\iint_Q \zeta n_rr^{\al-1}(\pt_ru)^2\df z\df t;
		\end{split}
	\end{equation*}
	from the 1th-2th estimate in \eqref{01.13.3}, we have
	\begin{equation*}
		\begin{split}
			A_3\leq \|f\|_{L^2(Q)}^2+C\|\vp\|_{L^2(0,T; H_\Ga^1(\Om;w))}^2\leq C\left(\|\vp^0\|_{H_\Ga^1(\Om;w)}^2+\|\vp^1\|_{L^2(\Om)}^2+\|f\|_{L^2(Q)}^2\right),
		\end{split}
	\end{equation*}
	hence, 
	\begin{equation}\label{02.11.1}
		\iint_{\Ga\ts (0,T)}\left(\f{\pt\vp}{\pt \nu}\right)^2\df S\df t\leq C\left(\|\vp^0\|_{H_\Ga^1(\Om;w)}^2+\|\vp^1\|_{L^2(\Om)}^2+\|f\|_{L^2(Q)}^2\right), 
	\end{equation}
	where the constant $C>0$ depending only on $\al$ and $T$. This proves the 4th estimate in \eqref{01.13.3}. 
	
	{\it Step 8}. From Step 1-Step 6 and Step 7, we get \eqref{01.13.3}. From Step 6 and Step 7, we get \eqref{01.13.20}. We complete the proof of this theorem. 
\end{proof}

The next lemma concerns the regularity of weak solutions of \eqref{01.21.1}.

\begin{lemma}\label{01.23.L1}
	Let $\vp^0\in D(\mcA)$ and $\vp^1\in H_\Ga^1(\Om;w)$ and $f\in H^1(0,T; L^2(\Om))$, and  $\vp$ be the weak solution of \eqref{01.21.1} with respect to $(\vp^0,\vp^1,f)$. Then  
	\begin{equation}\label{02.11.4}
		\begin{split} 
		r (\pt_t\vp)^2=0 \mbox{ on }\Ga^*\ts (0,T).
		\end{split} 
	\end{equation} 
\end{lemma}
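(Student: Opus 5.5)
The argument reduces the statement to Lemma \ref{02.02.L1} (ii), applied to $\pt_t\vp(t)$ for a.e. fixed $t$. By Theorem \ref{01.06.T1}, the hypotheses $\vp^0\in D(\mcA)$, $\vp^1\in H_\Ga^1(\Om;w)$ and $f\in H^1(0,T;L^2(\Om))$ give
\begin{equation*}
	\pt_t\vp\in L^\iy(0,T;H_\Ga^1(\Om;w)).
\end{equation*}
Hence $\pt_t\vp(t)\in H_\Ga^1(\Om;w)$ for a.e. $t\in(0,T)$. The proof of Step 2 in Lemma \ref{02.02.L1} uses only membership in $H_\Ga^1(\Om;w)$ together with the Hardy inequality of Lemma \ref{01.04.L1}. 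It does not use the elliptic equation \eqref{01.22.2}. Therefore, for every $u\in H_\Ga^1(\Om;w)$,
\begin{equation*}
	\int_\Om \left|\nabla\bigl(r^{\f{\al}{2}}u^2\bigr)\right|\df z\leq C\int_\Om \nabla u\cdot A\nabla u\df z ,
\end{equation*}
so $r^{\f{\al}{2}}u^2\in W^{1,1}(\Om)$ and $r u^2=r^{1-\f{\al}{2}}\bigl(r^{\f{\al}{2}}u^2\bigr)$ has zero trace on $\Ga^*$. Applying this with $u=\pt_t\vp(t)$ gives $r(\pt_t\vp(t))^2=0$ on $\Ga^*$ for a.e. $t$.

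To make the statement meaningful on $\Ga^*\ts(0,T)$ rather than slice by slice, I would prove the space-time version directly:
\begin{equation*}
	r^{\f{\al}{2}}(\pt_t\vp)^2\in W^{1,1}(\Om\ts(0,T)).
\end{equation*}
The spatial derivatives are controlled as above, integrated in $t$. The time derivative is
\begin{equation*}
	\pt_t\bigl[r^{\f{\al}{2}}(\pt_t\vp)^2\bigr]=2r^{\f{\al}{2}}(\pt_t\vp)(\pt_{tt}\vp),
\end{equation*}
which lies in $L^1(Q)$ because $\pt_t\vp\in L^\iy(0,T;L^2(\Om))$ and $\pt_{tt}\vp\in L^\iy(0,T;L^2(\Om))$ by \eqref{01.13.20}. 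The trace of this $W^{1,1}(Q)$ function on $\Ga^*\ts(0,T)$ is then well defined. Multiplying by the smooth bounded factor $r^{1-\f{\al}{2}}$, which vanishes at $r=0$ because $\al<2$, gives a $W^{1,1}$ function whose trace on $\{r=0\}$ is zero. This proves \eqref{02.11.4}.

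The main obstacle is checking that the trace claim is legitimate. One issue is that the trace of a product equals the product of $r^{1-\f{\al}{2}}|_{r=0}=0$ and the trace of $r^{\f{\al}{2}}(\pt_t\vp)^2$. I would justify this by density: approximate $\pt_t\vp$ by $C_\Ga^\iy$ functions in $L^2(0,T;H_\Ga^1(\Om;w))$, using the definition \eqref{01.14.1}, and use continuity of the $W^{1,1}$ trace. The other issue is the Hardy bound when $\al=1$. There I would use Lemma \ref{01.04.L1} (2) with $\be=\f{1}{2}$ to control $\int_\Om r^{\f{\al}{2}-1}u^2\df z$.
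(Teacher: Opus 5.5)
Your argument is correct and is essentially the paper's. Both rest on the Hardy inequality of Lemma~\ref{01.04.L1} to control $r^{\f{\al}{2}}(\pt_t\vp)^2$ using only $\pt_t\vp\in L^2(0,T;H_\Ga^1(\Om;w))$, and then on the vanishing factor $r^{1-\f{\al}{2}}$, which the paper makes explicit by writing $\iint_{\Ga_\de^*\ts(0,T)}r(\pt_t\vp)^2\df S\df t=\de^{1-\f{\al}{2}}\iint_{\Ga_\de^*\ts(0,T)}r^{\f{\al}{2}}(\pt_t\vp)^2\df S\df t$, bounding the last integral uniformly in $\de$ and letting $\de\to0^+$.

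One correction: $r^{1-\f{\al}{2}}$ is not smooth at $r=0$; it is only H\"older, e.g.\ $r^{1/2}$ when $\al=1$. So your product step must be justified by exactly this uniform slice bound, or by your density argument, rather than by a smooth-multiplier trace rule. Also, the space-time upgrade using $\pt_{tt}\vp$ is not needed.
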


\begin{proof}
		From the third term in \eqref{01.13.20} and the boundary condition
		$\pt_t\vp=0$ on $\Ga\ts (0,T)$, we obtain
		$\pt_t\vp\in L^2(0,T; H_\Ga^1(\Om;w))$ and
	\begin{equation*}
		\begin{split}
			&\left|\iint_{\Ga_{\de_1}^*\ts (0,T)} r (\pt_t\vp)^2\df S\df t-\iint_{\Ga_{\de_2}^*\ts (0,T)} r (\pt_t\vp)^2\df S\df t\right|\\
			&=\left|\iint_{Q_{\de_1}-Q_{\de_2}}\pt_r\left[r(\pt_t\vp)^2\right]\df S\df t\right|=\left|\iint_{Q_{\de_1}-Q_{\de_2}}(\pt_t\vp)^2\df z\df t+2\iint_{Q_{\de_1}-Q_{\de_2}}r (\pt_t\vp)(\pt_t\pt_r\vp)\df z\df t\right|\\
			&\leq C \iint_{Q_{\de_1}-Q_{\de_2}}(\pt_t\vp)^2\df z\df t+C\iint_{Q_{\de_1}-Q_{\de_2}}r^{\al}(\pt_r\pt_t\vp)^2\df z\df t
		\end{split}
	\end{equation*}
	for all $0\leq \de_1\leq \de_2\leq \f{1}{8}$.  This shows that 
	\begin{equation*}
		\iint_{\Ga_\de^*\ts (0,T)} r (\pt_t\vp)^2\df S\df t  
	\end{equation*}
	is continuous for $\de\in [0,\f{1}{8}]$. Now, from $\pt_t\vp=0$ on $\Ga\ts (0,T)$ and Lemma \ref{01.04.L1} and the third term in \eqref{01.13.20}, we get 
	\begin{equation*}
		\begin{split}
			\iint_{\Ga_\de^*\ts (0,T)}r(\pt_t\vp)^2\df S\df t
			&=\de^{1-\f{\al}{2}}\iint_{Q_\de} \pt_r\left(r^\f{\al}{2}(\pt_t\vp)^2\right)\df z\df t\\
			&=\f{\al}{2}\de^{1-\f{\al}{2}}\iint_{Q_\de} r^{\f{\al}{2}-1}(\pt_t\vp)^2\df z\df t+\de^{1-\f{\al}{2}}\iint_{Q_\de} r^\f{\al}{2} (\pt_t\vp)(\pt_r\pt_t\vp)\df z\df t\\
			&\leq C\de^{1-\f{\al}{2}}\iint_{Q_\de} r^\al (\pt_r\pt_t\vp)^2\df z\df t+C\de^{1-\f{\al}{2}}\iint_{Q_\de} (\pt_t\vp)^2\df z\df t\ra 0 
		\end{split}
	\end{equation*}
	as $\de\ra 0$. 
This completes the proof of this lemma. 
\end{proof}

\begin{corollary}\label{02.14.C1}
	Let $\vp^0\in D(\mcA), \vp^1\in H_\Ga^1(\Om;w)$ and $f\in H^1(0,T; L^2(\Om))$. Let $\vp$ be the solution of \eqref{01.21.1} with respect to $(\vp^0,\vp^1,f)$. Then 
	\begin{equation}\label{02.15.2}
		\pt_{\theta \theta }\vp\in L^2(Q), \mbox{ and } r^\f{\al}{2}\pt_{\theta r}\vp\in L^2(Q), \mbox{ and } r^{1+\f{\al}{2}}\pt_{rr}\vp\in L^2(Q).
	\end{equation}
	Moreover, there exists a positive  constant $C$, depending only on $\al$, such that 
	\begin{equation}\label{02.15.1}
		\begin{split} 
		&\iint_Q \left[(\pt_{\theta\theta}\vp)^2+r^\al (\pt_{\theta r}\vp)^2+r^{2+\al}(\pt_{rr}\vp)^2\right]\df z\df t\\
		&\leq C\left(\|\vp^0\|_{D(\mcA)}^2+\|\vp^1\|_{H_\Ga^1(\Om;w)}^2+\|f\|_{H^1(0,T; L^2(\Om))}^2\right). 
		\end{split} 
	\end{equation}
\end{corollary}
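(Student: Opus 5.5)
The plan is to apply Theorem~\ref{02.01.T1} pointwise in time, using the regularity provided by Theorem~\ref{01.06.T1}. Under the hypotheses $\vp^0\in D(\mcA)$, $\vp^1\in H_\Ga^1(\Om;w)$ and $f\in H^1(0,T;L^2(\Om))$, Theorem~\ref{01.06.T1} gives
\begin{equation*}
\vp\in L^2(0,T;D(\mcA))\cap H^2(0,T;L^2(\Om)),
\end{equation*}
together with the bound \eqref{01.13.20}. Thus for a.e.\ $t\in(0,T)$ the function $u=\vp(t)$ lies in $D(\mcA)=H_\Ga^1(\Om;w)\cap H^2(\Om;w)$. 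Set $g(t):=\mcA\vp(t)=f(t)-\pt_{tt}\vp(t)\in L^2(\Om)$. By Theorem~\ref{01.22.T2}(ii), $u$ satisfies the weak formulation $\int_\Om\nabla u\cdot A\nabla v\df z=\int_\Om g(t)v\df z$ for all $v\in H_\Ga^1(\Om;w)$. Hence $\vp(t)$ is the unique weak solution of \eqref{01.22.2} with right-hand side $g(t)$, by Lemma~\ref{01.22.L7}.

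Theorem~\ref{02.01.T1} then yields, for a.e.\ $t$,
\begin{equation*}
\int_\Om\left[(\pt_{\theta\theta}\vp)^2+r^\al(\pt_{\theta r}\vp)^2+r^{2+\al}(\pt_{rr}\vp)^2\right](t)\df z\leq C\|\mcA\vp(t)\|_{L^2(\Om)}^2,
\end{equation*}
with $C$ depending only on $\al$. Integrating over $(0,T)$ and using $\|\mcA\vp(t)\|_{L^2}\leq\|\vp(t)\|_{D(\mcA)}$, the $\esssup$ bound in \eqref{01.13.20} gives
\begin{equation*}
\int_0^T\|\mcA\vp\|_{L^2}^2\df t\leq T\,C\left(\|\vp^0\|_{D(\mcA)}+\|\vp^1\|_{H_\Ga^1(\Om;w)}+\|f\|_{H^1(0,T;L^2(\Om))}\right)^2 .
\end{equation*}
This proves \eqref{02.15.1} and therefore \eqref{02.15.2}. (The resulting constant depends on $T$ as well as on $\al$, since \eqref{01.13.20} does. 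Alternatively one can bound $\|f\|_{L^2(Q)}+\|\pt_{tt}\vp\|_{L^2(Q)}$ directly, which gives the same dependence.)

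The main point needing care is measurability in $t$: one must know that the second derivatives $\pt_{\theta\theta}\vp$, $r^{\al/2}\pt_{\theta r}\vp$ and $r^{1+\al/2}\pt_{rr}\vp$, defined for each fixed $t$, assemble into measurable functions on $Q$. This is handled by viewing $\vp$ as a distribution on $Q$ whose spatial derivatives coincide for a.e.\ $t$ with those of $\vp(t)$. Alternatively, one can run the argument on the Galerkin approximations $\vp^k$, which are finite sums of $\Phi_n(x)$ with smooth time coefficients, so that measurability is automatic. These are uniformly bounded by \eqref{01.13.18}, and one passes to the limit by weak lower semicontinuity in the weighted $L^2(Q)$ norms, identifying the weak limits with the distributional derivatives of $\vp$.
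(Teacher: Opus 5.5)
Your proposal is correct and is essentially the paper's argument: for a.e.\ $t$ you identify $\vp(t)$ as the weak solution of \eqref{01.22.2} with right-hand side $f(t)-\pt_{tt}\vp(t)$, apply Theorem~\ref{02.01.T1}, and integrate in time using \eqref{01.13.20}; the paper does exactly this, bounding $\|\pt_{tt}\vp\|_{L^2(Q)}^2+\|f\|_{L^2(Q)}^2$ as in your alternative. Your remark that the constant must also depend on $T$ is right. The paper's own proof invokes \eqref{01.13.20}, whose constant depends on $T$, and a single-mode example with $f=0$ shows the left-hand side of \eqref{02.15.1} grows linearly in $T$. So ``depending only on $\al$'' in the statement is inaccurate. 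Your measurability comment addresses a point the paper passes over.
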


\begin{proof}
	Note that $\vp$ satisfies $\pt_{tt}\vp+\mcA \vp=f$, i.e., $\mcA\vp=f-\pt_{tt}\vp\in L^2(\Om)$ for a.e. $t\in [0,T]$.  From Theorem \ref{02.01.T1} we get \eqref{02.15.2}, and from \eqref{02.11.5} we get
	\begin{equation*}
		\begin{split}
			&\iint_Q \left[(\pt_{\theta\theta}\vp)^2+r^\al (\pt_{\theta r}\vp)^2+r^{2+\al}(\pt_{rr}\vp)^2\right]\df z\df t\\
			&\leq C\left(\|\pt_{tt}\vp\|_{L^2(Q)}^2+\|f\|_{L^2(Q)}^2\right)\leq C\left(\|\vp^0\|_{D(\mcA)}^2+\|\vp^1\|_{H_\Ga^1(\Om;w)}^2+\|f\|_{H^1(0,T; L^2(\Om))}^2\right),
		\end{split}
	\end{equation*}
	where we use \eqref{01.13.20} in the last inequality. We complete the proof of this corollary. 
\end{proof}

\begin{corollary}\label{02.15.C2}
	Let $\vp^0\in D(\mcA), \vp^1\in H_\Ga^1(\Om;w)$ and $f\in H^1(0,T; L^2(\Om))$. Let $\vp$ be the weak solution to equation \eqref{01.21.1} with respect to $(\vp^0,\vp^1, f)$. Then we have
	
	(i) $\pt_\theta \vp=0$ on $\Ga\ts (0,T)$, 
	
	(ii) for any $\e\in [0,\f{2-\al}{4})$, we have $r^\f{\al}{2}\vp^2\in L^2(0,T; W^{1,1}(\Om))$, and $r^{\f{\al}{2}+\e}\vp\in L^2(0,T; H^1(\Om))$, and $r \vp^2=r\vp=0 \mbox{ on } \Ga^*\ts (0,T)$, 
	
	(iii) $r(\nabla \vp\cdot A\nabla\vp)\in L^2(0,T; W^{1,1}(\Om))$, and $ r(\nabla \vp\cdot A\nabla \vp)=0 \mbox{ on }\Ga^*\ts (0,T)$. 
\end{corollary}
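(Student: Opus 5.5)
The plan is to apply Lemma \ref{02.02.L1} pointwise in time and then integrate over $(0,T)$. By Theorem \ref{01.06.T1}, under the stated hypotheses
\[
\vp\in L^2(0,T;D(\mcA))\cap H^1(0,T;H_\Ga^1(\Om;w))\cap H^2(0,T;L^2(\Om)).
\]
Hence for a.e. $t\in(0,T)$ the function $u=\vp(t)$ lies in $D(\mcA)$ and is the weak solution of \eqref{01.22.2} with right-hand side $g(t)=f(t)-\pt_{tt}\vp(t)\in L^2(\Om)$. Moreover $\|g\|_{L^2(Q)}$ is controlled by the right-hand side of \eqref{01.13.20}. So each assertion of Lemma \ref{02.02.L1} holds for $\vp(t)$ at a.e. $t$, and the only work is to upgrade the pointwise-in-$t$ statements to the Bochner-space memberships and to the boundary traces on $\Ga\ts(0,T)$ and $\Ga^*\ts(0,T)$.

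\emph{Part (i).} Since $\vp(t)\in H_\Ga^1(\Om;w)$ for a.e. $t$, Lemma \ref{02.02.L1}(i) gives $\pt_\theta\vp(t)=0$ on $\Ga$. Because $\vp(t)\in H^2(\Om_{\f12})$ uniformly in $t$ by \eqref{01.13.20}, this trace is meaningful in $L^2(0,T;L^2(\Ga))$, so it vanishes a.e. on $\Ga\ts(0,T)$.

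\emph{Part (ii).} I will redo the estimates from Step 2 of the proof of Lemma \ref{02.02.L1} with explicit constants. These give
\[
\|r^{\f{\al}{2}}\vp(t)^2\|_{W^{1,1}(\Om)}+\|r^{\f{\al}{2}+\e}\vp(t)\|_{H^1(\Om)}^2\leq C\|\vp(t)\|_{H_\Ga^1(\Om;w)}^2,
\]
using the Hardy inequality of Lemma \ref{01.04.L1} (with $\be=\f{\al}{2}$ or $\be=\al+2\e-1>0$ when $\al=1$). Squaring and integrating in $t$ bounds the left-hand side by $\esssup_t\|\vp(t)\|^2_{H_\Ga^1(\Om;w)}$ times $T$, which is finite by \eqref{01.13.3}. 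Measurability in $t$ follows from $\vp\in C([0,T];L^2(\Om))$ together with weak measurability of the derivatives. The trace identities $r\vp^2=r\vp=0$ on $\Ga^*$ hold at a.e. $t$ by Lemma \ref{02.02.L1}(ii), and the trace map is continuous from $W^{1,1}$ and $H^1$ to $L^1(\Ga^*)$ and $L^2(\Ga^*)$, so they hold a.e. on $\Ga^*\ts(0,T)$.

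\emph{Part (iii).} By Step 3 of the proof of Lemma \ref{02.02.L1}, the $W^{1,1}$ norm of $r(\nabla u\cdot A\nabla u)$ is bounded by
\[
C\int_\Om\Big[(\pt_\theta u)^2+r^\al(\pt_ru)^2+(\pt_{\theta\theta}u)^2+r^\al(\pt_{\theta r}u)^2+r^{2+\al}(\pt_{rr}u)^2\Big]\df z,
\]
and \eqref{02.11.5} bounds this by $C(\|g(t)\|_{L^2}^2+\|\vp(t)\|^2_{H_\Ga^1(\Om;w)})$. The obstacle is that $\|g(t)\|_{L^2}^2$ is only integrable in $t$, not square-integrable, so the natural conclusion is $L^1(0,T;W^{1,1})$. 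To reach the claimed $L^2(0,T;W^{1,1}(\Om))$ I will instead use the $\esssup$ bound on $\|\pt_{tt}\vp(t)\|_{L^2(\Om)}$ in \eqref{01.13.20}, together with $f\in H^1(0,T;L^2(\Om))\subset C([0,T];L^2(\Om))$. These make $\|g(t)\|_{L^2}$ essentially bounded in $t$, so the $W^{1,1}$ norm is in $L^\iy(0,T)\subset L^2(0,T)$. The vanishing trace on $\Ga^*$ then holds at a.e. $t$ by Lemma \ref{02.02.L1}(iii), and the same trace-continuity argument as in (ii) transfers it to $\Ga^*\ts(0,T)$.
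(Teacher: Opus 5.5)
Your route is the paper's. Its entire proof says the corollary is a direct consequence of Lemma~\ref{02.02.L1}, i.e.\ apply the elliptic lemma to $u=\varphi(t)$ with $g=f(t)-\partial_{tt}\varphi(t)$ for a.e.\ $t$. Your extra time bookkeeping is correct, and it is genuinely needed in (iii), which the paper skips. A quantity bounded by $\|g(t)\|_{L^2(\Omega)}^2$ lies in $L^2(0,T)$ only because \eqref{01.13.20} bounds $\|\mathcal{A}\varphi(t)\|_{L^2(\Omega)}$ uniformly in $t$. You can cite that bound directly instead of going through $\partial_{tt}\varphi$ and $f\in C([0,T];L^2(\Omega))$.

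There is, however, a genuine gap at the endpoint $\alpha=1$, $\varepsilon=0$ of (ii).

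\begin{itemize}
\item \textbf{The failing step.} There your Hardy exponent is $\beta=\alpha+2\varepsilon-1=0$, not positive, so Lemma~\ref{01.04.L1}(2) does not apply. The critical inequality $\int_\Omega r^{-1}u^2\,\mathrm{d}z\le C\int_\Omega r(\partial_r u)^2\,\mathrm{d}z$ is in fact false.
\item \textbf{The conclusion itself fails.} For $\alpha=1$, take the $\theta$-independent function $\Phi(r)=\sum_{k\ge 0}(-\lambda r)^k/(k!)^2$, i.e.\ $J_0(2\sqrt{\lambda r})$, with $\lambda>0$ chosen so that $\Phi(1)=0$. It is smooth on $[0,1]$, satisfies $-(r\Phi')'=\lambda\Phi$, belongs to $D(\mathcal{A})$, and has $\Phi(0)=1$. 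The solution $\varphi(t)=\cos(\sqrt{\lambda}\,t)\Phi$ with data $(\Phi,0,0)$ meets all the hypotheses. Yet near $r=0$ we have $\partial_r(r^{1/2}\varphi)=\tfrac12 r^{-1/2}\varphi+r^{1/2}\partial_r\varphi\approx\tfrac12 r^{-1/2}\cos(\sqrt{\lambda}\,t)$, so $r^{1/2}\varphi\notin L^2(0,T;H^1(\Omega))$.
\item \textbf{Where the defect lies.} The proof of Lemma~\ref{02.02.L1}(ii) contains the same silent step, so this is a defect of the statement, not something a better argument can repair. For $\alpha=1$ you must restrict to $\varepsilon\in(0,\tfrac14)$.
\item \textbf{What survives.} The trace claims in (ii) are unaffected, since they need only one admissible $\varepsilon>0$. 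Alternatively, $r\varphi(t)\in H^1(\Omega)$ and $r\varphi(t)^2\in W^{1,1}(\Omega)$ hold directly. The integrability of $r^{-1}(r\varphi)^2=r\varphi^2$ and of $r^{-1}(r\varphi^2)=\varphi^2$ then forces both traces at $r=0$ to vanish.
\end{itemize}
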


\begin{proof}
	This is a direct consequence of Lemma \ref{02.02.L1}.
\end{proof}

\begin{corollary}\label{02.12.C1}
	Let $\vp^0\in D(\mcA)$ and $\vp^1\in H_\Ga^1(\Om;w)$ and $f=0$. Let $\vp$ be the weak solution of \eqref{01.21.1} with respect to $(\vp^0,\vp^1,f=0)$. Then, for each $t\in [0,T]$, we have  
	\begin{equation*}
		E(t)=E(0), \mbox{ with } E(t)=\f{1}{2}\int_\Om \left[(\pt_t\vp)^2+\nabla \vp\cdot A\nabla \vp\right]\df z. 
	\end{equation*}
\end{corollary}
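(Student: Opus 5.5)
The plan is to prove that $t\mapsto E(t)$ is absolutely continuous on $[0,T]$ with derivative zero a.e. We first work at the Galerkin level and then pass to the limit, or use the regularity of Theorem \ref{01.06.T1} directly. By Theorem \ref{01.06.T1}, under $\vp^0\in D(\mcA)$, $\vp^1\in H_\Ga^1(\Om;w)$ and $f=0$ we have
\begin{equation*}
\vp\in L^2(0,T;D(\mcA))\cap H^1(0,T;H_\Ga^1(\Om;w))\cap H^2(0,T;L^2(\Om)).
\end{equation*}
Consequently $\pt_t\vp\in H^1(0,T;L^2(\Om))$ and $\vp\in H^1(0,T;H_\Ga^1(\Om;w))$. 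Both maps $t\mapsto\|\pt_t\vp(t)\|_{L^2(\Om)}^2$ and $t\mapsto\|\vp(t)\|_{H_\Ga^1(\Om;w)}^2$ are then absolutely continuous, by the standard result that $u\in H^1(0,T;H)$ for a Hilbert space $H$ implies $\frac{\df}{\df t}\|u\|_H^2=2(\pt_tu,u)_H$ a.e. (see Evans, Chapter 5.9.2).

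We next compute, for a.e. $t$,
\begin{equation*}
\f{\df}{\df t}E(t)=(\pt_{tt}\vp,\pt_t\vp)_{L^2(\Om)}+(\vp,\pt_t\vp)_{H_\Ga^1(\Om;w)}.
\end{equation*}
Since $\pt_t\vp(t)\in H_\Ga^1(\Om;w)$ for a.e. $t$, we may take $v=\pt_t\vp(t)$ as the test function in Definition \ref{01.06.D1}(i) with $f=0$. The duality pairing coincides with the $L^2$ pairing because $\pt_{tt}\vp(t)\in L^2(\Om)$, so the weak formulation gives $(\pt_{tt}\vp,\pt_t\vp)_{L^2(\Om)}+(\vp,\pt_t\vp)_{H_\Ga^1(\Om;w)}=0$. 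Equivalently, one can use $\mcA\vp(t)\in L^2(\Om)$ together with Theorem \ref{01.22.T2}(ii) to write $(\vp,\pt_t\vp)_{H_\Ga^1(\Om;w)}=(\mcA\vp,\pt_t\vp)_{L^2(\Om)}$, which yields $(\pt_{tt}\vp+\mcA\vp,\pt_t\vp)_{L^2(\Om)}=0$. Thus $E'(t)=0$ a.e., and integrating gives $E(t)=E(0)$ for every $t\in[0,T]$, where continuity of $E$ lets us pass from a.e. $t$ to all $t$.

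The main obstacle is justifying the chain rule for $\|\vp\|_{H_\Ga^1(\Om;w)}^2$ and handling the evaluation at every $t$ rather than a.e. There are two routes. The first is to use the continuous embedding $H^1(0,T;H)\hookrightarrow C([0,T];H)$. The second, which is safer, is to argue at the Galerkin level. For $\vp^k$, the identity $\frac{\df}{\df t}\big(\|\pt_t\vp^k\|^2+\|\vp^k\|_{H_\Ga^1(\Om;w)}^2\big)=0$ is exact by \eqref{01.13.6}, using $\Phi_n$-orthogonality in both inner products (Lemma \ref{06.28.L1}). Hence $E^k(t)=E^k(0)$. Here
\begin{equation*}
E^k(0)=\f{1}{2}\sum_{n\le k}\big[(\vp^1,\Phi_n)^2+\la_n(\vp^0,\Phi_n)^2\big]\ra E(0).
\end{equation*}
The bounds of \eqref{01.13.17} and \eqref{01.13.18} give $\vp^k\to\vp$ in $C([0,T];H_\Ga^1(\Om;w))$ and $\pt_t\vp^k\to\pt_t\vp$ in $C([0,T];L^2(\Om))$. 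This can be checked modewise: the tail $\sum_{n>k}$ of $\la_n(\vp_n)^2+(\vp_n')^2$ is time-independent for the scalar ODEs \eqref{01.13.4} with $f=0$, so it tends to zero uniformly in $t$. Passing to the limit then gives $E(t)=E(0)$ for all $t$.
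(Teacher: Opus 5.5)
Your proposal is correct, and its first route is the paper's proof. The paper multiplies \eqref{01.21.1} by $\pt_t\vp$ and uses the regularity of Theorem \ref{01.06.T1} and the Green formula of Theorem \ref{01.22.T2}(ii). This rewrites $\iint_Q[\pt_{tt}\vp-\Div(A\nabla\vp)]\pt_t\vp\,\df z\df t$ as $\f12\iint_Q\pt_t[(\pt_t\vp)^2+\nabla\vp\cdot A\nabla\vp]\,\df z\df t$, from which $E(T)=E(0)$ follows with $T$ arbitrary. You do the same computation pointwise in $t$. You are more explicit than the paper about the chain rule in $H^1(0,T;H)$ and about working with the continuous representative. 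One detail you gloss over: Definition \ref{01.06.D1}(i) holds off a $v$-dependent null set. So before testing with the $t$-dependent $v=\pt_t\vp(t)$, you should pass to a common null set using a countable dense subset of $H_\Ga^1(\Om;w)$.

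Your Galerkin route, on the other hand, is genuinely different. Since \eqref{01.13.4} is diagonal, $\vp_n^k=\vp_n$ does not depend on $k$, and each scalar oscillator conserves $\la_n\vp_n^2+(\vp_n')^2$. Lemma \ref{06.28.L1} identifies $E(t)$ with $\f12\sum_n[\la_n\vp_n(t)^2+\vp_n'(t)^2]$, so conservation is just Parseval plus modewise conservation. The $t$-independence of the tail gives uniform convergence in $t$, so you get the continuous representative, and its identification with the weak solution, for free.

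This buys generality. The argument never uses $\mcA\vp\in L^2(\Om)$, Theorem \ref{01.22.T2}, or the improved regularity of Theorem \ref{01.06.T1}. It therefore yields $E(t)=E(0)$ for all finite-energy data $\vp^0\in H_\Ga^1(\Om;w)$, $\vp^1\in L^2(\Om)$, without the extra hypotheses of the corollary. The paper's route, in turn, is the same test-function and Green-formula mechanism that Section \ref{S3} relies on for the non-spectral multiplier $H\cdot\nabla\psi$, which has no modal counterpart.
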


\begin{proof}
	Multiplying \eqref{01.21.1} by $\pt_t\vp$, from Theorem \ref{01.06.T1} and Theorem \ref{01.22.T2} (ii), integrating on $Q$, we get
	\begin{equation*}
		\begin{split}
			0
			&=\iint_{Q} \left[\pt_{tt}\vp-\Div(A\nabla\vp)\right]\pt_t\vp\df z\df t=\f{1}{2}\iint_Q \pt_t(\pt_t\vp)^2\df z\df t+\f{1}{2}\iint_Q \pt_t(\nabla\vp\cdot A\nabla\vp)\df z\df t\\
			&=\f{1}{2}\int_\Om\left[ (\pt_t\vp)^2+(\nabla\vp\cdot A\nabla\vp)\right]\df z\bigg|_{t=0}^{t=T}.
		\end{split}
	\end{equation*}
	This proves the corollary. 
\end{proof}

\section{Observability}\label{S3}

Let $\de_0\in (0,\f{1}{32})$ be a given constant. Set
\begin{equation*}
	I_\om=[0, 4\de_0)\cup (2\pi-4\de_0, 2\pi),\qquad \om=I_\om\ts (0,1). 
\end{equation*} 
The interior observation region is therefore the strip $\omega$, and
\begin{equation*}
	\mathbb{T}\setminus I_\om=(4\de_0, 2\pi-4\de_0). 
\end{equation*}
In the argument below, $\mathbb{T}\setminus I_\om$ is treated inside a single
coordinate chart of the smooth manifold $\mathbb{T}$; more precisely, this set is
contained in the chart $(\delta_0,2\pi-\delta_0)$. 

\subsection{A motivating example}

We now explain why the cut-off decomposition introduced below is natural, and
why the strip $\omega$ must appear in the final estimate.

Choose $\chi\in C_0^\infty(0,2)$, $\chi\not\equiv 0$, and for $\varepsilon\in (0,\frac14)$ define
\[
\eta_\varepsilon(r)=\chi(r/\varepsilon).
\]
Then $\eta_\varepsilon$ is supported in $(0,2\varepsilon)$, so that
\[
\eta_\varepsilon(1)=0,\qquad \eta_\varepsilon'(1)=0.
\]
For each integer $n\geq 1$, consider the oscillatory profile
\[
u_{n,\varepsilon}(\theta,r,t)=\eta_\varepsilon(r)\sin n(\theta-t).
\]
This family is not intended as a literal counterexample to the homogeneous
observability statement. Instead, it captures the quasimode-type obstruction
that motivates the present proof.

One has
\[
\partial_{tt}u_{n,\varepsilon}=-n^2\eta_\varepsilon(r)\sin n(\theta-t),
\qquad
\partial_{\theta\theta}u_{n,\varepsilon}=-n^2\eta_\varepsilon(r)\sin n(\theta-t),
\]
hence the time and angular contributions cancel exactly:
\[
\partial_{tt}u_{n,\varepsilon}-\partial_{\theta\theta}u_{n,\varepsilon}=0.
\]
Therefore
\[
\partial_{tt}u_{n,\varepsilon}-\Div(A\nabla u_{n,\varepsilon})
=-\partial_r\!\left(r^\alpha \eta_\varepsilon'(r)\right)\sin n(\theta-t).
\]
Since $\eta_\varepsilon$ is supported near $r=0$, this residual is concentrated near the degenerate boundary. On the other hand, the top-boundary observation is completely blind to this family:
\[
u_{n,\varepsilon}(\theta,1,t)=0,\qquad
\partial_r u_{n,\varepsilon}(\theta,1,t)=\eta_\varepsilon'(1)\sin n(\theta-t)=0.
\]
Thus this family exhibits the basic obstruction mechanism: strong angular
oscillation with negligible radial penetration can remain essentially invisible
from the top boundary while the defect is confined near the degenerate side. In
this sense, $u_{n,\varepsilon}$ should be read as a source-supported quasimode
mechanism rather than as an exact homogeneous counterexample.

There is a second obstruction. The multiplier used below is
\[
H(\theta,r)=((\theta-\pi),r),
\]
which is not globally defined on the periodic variable $\theta\in \mathbb{T}$.
Hence the multiplier method cannot be applied directly on the whole cylinder.
To overcome this topological difficulty, we remove a narrow angular strip
\[
I_\om=[0,4\delta_0)\cup(2\pi-4\delta_0,2\pi),
\]
and choose a cut-off $\zeta=\zeta(\theta)$ such that $\zeta\equiv 1$ away from $I_\om$ and $\zeta\equiv 0$ near $I_\om$. We then decompose the solution as
\[
\vp=\psi+\xi,\qquad \psi=\zeta\vp,\qquad \xi=(1-\zeta)\vp.
\]

The roles of these two pieces are transparent:
\begin{itemize}
	\item $\psi$ is supported in a single chart of $\mathbb{T}$, so the multiplier argument with $H=((\theta-\pi),r)$ becomes legitimate;
	\item $\xi$ is supported in the strip $\om=I_\om\ts(0,1)$, which is exactly where the localization error produced by the cut-off is concentrated.
\end{itemize}

Therefore the strip $\omega$ is not an artificial add-on. It is forced by the
geometry of the quasimode-type obstruction and by the necessity of localizing
the multiplier argument on the periodic variable. This is why the final
observability inequality proved below takes a mixed form, involving the
boundary term on $\Gamma$ and the interior observation on $\omega$.

\subsection{Cut-off decomposition}

Taking $\zeta=\zeta(\theta)\in C^\iy(\ol\R), 0\leq \zeta\leq 1$ such that 
\begin{equation}\label{03.16.1}
	\zeta=1 \mbox{ on }(3\de_0,2\pi-3\de_0), \quad \zeta=0 \mbox{ on }(-\iy,2\de_0)\cup (2\pi-2\de_0,+\iy), 
\end{equation}
and
\begin{equation*}
	|\zeta'|<C\de_0^{-1},\quad |\zeta''|<C\de_0^{-2}, 
\end{equation*}
where the positive constants $C$ are absolute. 

The localized component $\psi=\zeta \vp$ solves the following system:
\begin{equation}\label{03.01.1}
	\begin{cases}
		\pt_{tt}\psi-\Div(A\nabla\psi)=-2\nabla\zeta \cdot A\nabla\vp-\vp\Div(A\nabla\zeta)\equiv g, &\mbox{in }\wh Q, \\
		\psi=0, &\mbox{on } \pt \wh Q-\wh\Ga^*\ts (0,T), \\
		\f{\pt \psi}{\pt \nu_A}=0, &\mbox{on } \wh \Ga^*\ts (0,T),\\
		\psi(0)=\zeta \vp^0, \pt_t\psi(0)=\zeta\vp^1, &\mbox{in }\wh \Om, 
	\end{cases}
\end{equation}
where $\wh \Om=(\de_0,2\pi-\de_0)\ts (0,1)$, $\wh Q=\wh \Om\ts (0,T)$, and $\wh\Ga^*=(\de_0,2\pi-\de_0)\ts \{0\}$. The remainder $\xi=(1-\zeta)\vp$ solves
\begin{equation}\label{03.22.2}
	\begin{cases}
		\pt_{tt}\xi -\Div(A\nabla\xi)=2\nabla\zeta \cdot A\nabla\vp+\vp\Div(A\nabla\zeta)=-g, &\mbox{in }\wt Q,\\
		\xi=0, &\mbox{on }\pt \wt Q-\wt \Ga^*\ts (0,T),\\
		\f{\pt \xi}{\pt \nu_A}=0, &\mbox{on }\wt \Ga^*\ts (0,T),\\
		\xi(0)=(1-\zeta)\vp^0, \pt_t\xi(0)=(1-\zeta)\vp^1, &\mbox{in }\wt \Om, 
	\end{cases}
\end{equation}
where $\wt \Om=([0,4\de_0)\cup ( 2\pi-4\de_0,2\pi))\ts (0,1)$, $\wt Q=\wt \Om\ts (0,T)$, and $\wt \Ga^*=([0,4\de_0)\cup ( 2\pi-4\de_0,2\pi))\ts \{0\}$. Thus $\wt \Om=\om$.

Since $f=0\in H^1(0,T; L^2(\Om))$, Theorem \ref{01.06.T1} gives
\begin{equation*}
	\begin{split}
		\vp\in L^2(0,T; D(\mcA))\cap H^1(0,T; H_\Ga^1(\Om;w))\cap H^2(0,T; L^2(\Om)), 
	\end{split}
\end{equation*}
and hence
\begin{equation}\label{03.01.4}
	\psi\in L^2(0,T; D(\mcA))\cap H^1(0,T; H_\Ga^1(\Om;w))\cap H^2(0,T; L^2(\Om)). 
\end{equation}
Moreover, Corollary \ref{02.14.C1} yields
\begin{equation}\label{03.17.4}
	\pt_{\theta\theta}\psi, r^\f{\al}{2}\pt_{\theta r}\psi, r^{1+\f{\al}{2}}\pt_{rr}\psi\in L^2( Q), 
\end{equation}
and then
\begin{equation}\label{03.01.5}
	\begin{split}
		\pt_\theta \psi\in H_\Ga^1( \Om;w), \mbox{ and }  r^\al \pt_r\psi\in H^1(\Om;w). 
	\end{split}
\end{equation} 

Recall \eqref{03.16.1}, we get 
\begin{equation}\label{03.16.2}
	\psi=0 \mbox{ on } \left[(\de_0, 2\de_0)\ts (0,1)\ts (0,T)\right]\cup \left[(2\pi-2\de_0, 2\pi-\de_0)\ts (0,1)\ts (0,T)\right]. 
\end{equation}
 
\subsection{Preliminary identities}

In this subsection we record the elementary identities needed in the multiplier computation. They are consequences of the support properties of $\psi$, the regularity stated in \eqref{03.01.4}--\eqref{03.01.5}, and repeated applications of Fubini's theorem.

\begin{lemma}\label{03.19.L1}
The following identities hold:
\end{lemma}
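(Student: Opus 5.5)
Lemma \ref{03.19.L1} records the integration-by-parts identities needed to expand the multiplier terms for $\psi$ with $H=((\theta-\pi),r)$. I would prove each one the same way. First I integrate in the variable being differentiated, then justify every boundary contribution from the regularity \eqref{03.01.4}--\eqref{03.01.5}, and finally integrate over the remaining variables using Fubini's theorem. By density (Theorem \ref{01.06.T1} and the continuous dependence in \eqref{01.13.20}) I may assume $\vp^0\in D(\mcA)$ and $\vp^1\in H_\Ga^1(\Om;w)$, so that all the integrands below are integrable.

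The identities fall into three groups.
\begin{enumerate}
\item \emph{Angular integrations.} Typical examples are
\[
\iint \pt_\theta\psi\,(\theta-\pi)\pt_{\theta\theta}\psi \;\text{ and }\; \iint r^\al\pt_r\psi\,(\theta-\pi)\pt_{\theta r}\psi .
\]
Writing each integrand as $\frac12(\theta-\pi)\pt_\theta[\cdots]$ and integrating over $(\de_0,2\pi-\de_0)$ produces no boundary terms, because $\psi$ vanishes near $\theta=\de_0$ and $\theta=2\pi-\de_0$ by \eqref{03.16.2}. This gives, for instance,
\[
\iint \pt_\theta\psi\,(\theta-\pi)\pt_{\theta\theta}\psi=-\tfrac12\iint(\pt_\theta\psi)^2 .
\]
Integrability of the integrands is exactly \eqref{03.17.4}.
\item \emph{Radial integrations.} Typical examples are
\[
\iint r^\al\pt_r\psi\, r\pt_{rr}\psi,\quad \iint r\,\pt_\theta\psi\,\pt_{\theta r}\psi,\quad \iint r\,\pt_t\psi\,\pt_{tr}\psi .
\]
Integrating in $r\in(0,1)$ gives interior terms with the coefficients one expects, for instance
\[
-\tfrac{\al+1}{2}\iint r^\al(\pt_r\psi)^2 \;\text{ and }\; -\tfrac12\iint(\pt_t\psi)^2,
\]
together with boundary terms at $r=1$ and at $r=0$.
\begin{itemize}
\item At $r=1$: $\psi=\pt_t\psi=\pt_\theta\psi=0$ on $\Ga$ by Corollary \ref{02.15.C2}(i), so only $\frac12\iint_{\Ga\ts(0,T)}(\pt_r\psi)^2$ survives.
\item At $r=0$: the terms are $r(\pt_t\psi)^2$, $r(\nabla\psi\cdot A\nabla\psi)$ and $r\psi\,r^\al\pt_r\psi$. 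They vanish by Lemma \ref{01.23.L1}, Corollary \ref{02.15.C2}(ii)--(iii), and the Neumann trace Theorem \ref{01.22.T2}.
\end{itemize}
\item \emph{Time integrations.} An example is
\[
\iint\pt_{tt}\psi\,(H\cdot\nabla\psi)=\Big[\int_\Om\pt_t\psi\,H\cdot\nabla\psi\Big]_0^T+\tfrac12\iint(\pt_t\psi)^2\Div H .
\]
These use $\psi\in H^2(0,T;L^2)\cap H^1(0,T;H_\Ga^1(\Om;w))$.
\end{enumerate}

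The main obstacle is the boundary at the degenerate side $r=0$. There the classical trace theorem fails, and the products involved are only known to lie in $W^{1,1}$, not to be smooth. For each such term I would integrate over $\{r>\de\}$ first, which is legitimate by the uniform ellipticity of the equation there. I would then let $\de\to0^+$, using the continuity in $\de$ of the boundary integrals from Lemma \ref{01.22.L5} and the vanishing traces from Lemma \ref{02.02.L1}. This mirrors the proof of Lemma \ref{01.23.L1}.

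The delicate term is $r^{1+\al}(\pt_r\psi)^2$ at $r=0$. I would handle it exactly as in Step 3 of Lemma \ref{02.02.L1}: show that $\zeta r\pt_r\psi\in H^1(\Om;w)$ and pair it with $\ga_\nu(A\nabla\psi)$, which equals $0$ by \eqref{02.11.7}. The case $\al=1$ needs the weaker Hardy inequality of Lemma \ref{01.04.L1}(2) in place of part (1).
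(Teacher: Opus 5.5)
Your proposal is correct and follows the paper's proof. The paper likewise handles each identity \eqref{03.19.1}--\eqref{03.19.6} by showing that the relevant quantity and its derivative in one variable lie in $L^1(\wh Q)$, integrating that variable out via Fubini, and removing the endpoint terms with \eqref{03.16.2} in $\theta$ and with Corollary \ref{02.15.C2} at $r=0,1$; part (iii) of that corollary rests on exactly the Step 3 pairing argument of Lemma \ref{02.02.L1} you cite. The only cosmetic difference is that you reach $r=0$ by truncating at $r=\de$ and letting $\de\to0^+$, whereas the paper uses the absolutely continuous representative in $r$ for a.e.\ $(\theta,t)$, and these two devices are equivalent.
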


\begin{proof}

Since $\psi\in L^2(0,T; H_\Ga^1(\Om;w))$, we first obtain:

(I) $\psi^2\in L^1(\wh Q)$, and $\pt_\theta \psi^2=2\psi\pt_\theta \psi\in L^1(\wh Q)$, which shows that $\psi^2\in C_0(\de_0, 2\pi-\de_0)$ for a.e.~$(r,t)\in (0,1)\ts (0,T)$ by \eqref{03.16.2}, and 
\begin{equation}\label{03.19.1}
	\begin{split}
		\iint_{\wh Q} \pt_\theta \psi^2\df z\df t=\int_0^T\int_0^1\int_{\de_0}^{2\pi-\de_0}\pt_\theta \psi^2\df \theta \df r\df t=0
	\end{split}
\end{equation}
by Fubini's Theorem and \eqref{03.16.2}; 

(II) $\psi^2\in L^1(\wh Q)$, and $\pt_r (r\psi^2)=\psi^2+2r\psi \pt_r\psi\in L^1(\wh Q)$, which shows that $r\psi^2\in C_0[0,1]$ for a.e.~$(\theta,t)\in (\de_0,2\pi-\de_0)\ts (0,T)$ by Corollary \ref{02.15.C2} (ii), and 
\begin{equation}\label{03.19.2}
	\begin{split}
		\iint_{\wh Q} \pt_r(r\psi^2)\df z\df t=\int_0^T\int_{\de_0}^{2\pi-\de_0}\int_0^1 \pt_r(r\psi^2)\df r\df \theta \df t=0
	\end{split}
\end{equation}
by Fubini's Theorem and Corollary \ref{02.15.C2}. 

Next, since $\pt_\theta \psi\in H_\Ga^1(\Om;w)$, we have:

(III) $(\pt_\theta\psi)^2\in L^1(\wh Q)$, and $\pt_\theta (\pt_\theta\psi)^2=2\pt_{\theta}\psi \pt_{\theta\theta}\psi\in L^1(\wh Q)$, which shows that $(\pt_\theta\psi)^2\in C_0(\de_0,2\pi-\de_0)$ for a.e.~$(r,t)\in (0,1)\ts (0,T)$ by \eqref{03.16.2}, and 
\begin{equation}\label{03.19.3}
	\iint_{\wh Q} \pt_\theta (\pt_\theta\psi)^2\df z\df t=\int_0^T\int_0^1\int_{\de_0}^{2\pi-\de_0}\pt_\theta (\pt_\theta \psi)^2\df \theta \df r\df t=0
\end{equation}
by Fubini's Theorem and \eqref{03.16.2}; 

(IV) $(\pt_\theta\psi)^2\in L^1(\wh Q)$, and $\pt_r[r(\pt_\theta \psi)^2]=(\pt_\theta\psi)^2+2r(\pt_\theta \psi)(\pt_{r\theta}\psi)\in L^1(\wh Q)$ by \eqref{03.17.4}, which shows that $r(\pt_\theta \psi)^2\in C_0[0,1]$ for a.e.~$(\theta,t)\in (\de_0,2\pi-\de_0)\ts (0,T)$ by Corollary \ref{02.15.C2} (i) and  (iii), and 
\begin{equation}\label{03.19.4}
	\begin{split}
		\iint_{\wh Q} \pt_r \left[r(\pt_\theta \psi)^2\right]\df z\df t=\int_0^T\int_{\de_0}^{2\pi-\de_0}\int_0^1\pt_r \left[r(\pt_\theta\psi)^2\right]\df \theta \df r\df t=0
	\end{split}
\end{equation}
by Fubini's Theorem and Corollary \ref{02.15.C2} (i) and (iii) and $\al\in [1,2)$. 

(V) Since $r^\al (\pt_r\psi)^2\in L^1(\wh Q)$ and $\pt_\theta [r^\al (\pt_r\psi)^2]=2r^\al (\pt_r\psi)(\pt_{\theta r}\psi)\in L^1(\wh Q)$ by \eqref{03.17.4}, which shows that 
$r^\al (\pt_r\psi)^2\in C_0(\de_0,2\pi-\de_0)$ for a.e.~$(r,t)\in (0,1)\ts (0,T)$, and 
\begin{equation}\label{03.19.5}
	\begin{split}
		\iint_{\wh Q}\pt_\theta \left[r^\al (\pt_r\psi)^2\right]\df z\df t=\int_0^T\int_0^1\int_{\de_0}^{2\pi-\de_0}\pt_\theta \left[r^\al (\pt_r\psi)^2\right]\df \theta \df r\df t=0
	\end{split}
\end{equation}
by Fubini's Theorem and \eqref{03.16.2}. 

(VI) Since $r^{\al+1}(\pt_r\psi)^2\in L^1(\wh Q)$ and $\pt_r[r^{\al+1}(\pt_r\psi)^2]=(\al+1)r^\al(\pt_r\psi)^2+2r^{\al+1}(\pt_r\psi)(\pt_{rr}\psi)\in L^1(\wh Q)$ by \eqref{03.17.4}, which shows that $r^{\al+1}(\pt_r\psi)^2\in C[0,1]$ for a.e.~$(\theta,t)\in (\de_0,2\pi-\de_0)\ts (0,T)$, and 
\begin{equation}\label{03.19.6}
	\begin{split}
		\iint_{\wh Q}\pt_r\left[r^{\al+1}(\pt_r\psi)^2\right]\df z\df t
		&=\int_0^T\int_{\de_0}^{2\pi-\de_0}\int_0^1\pt_r\left[r^{\al+1}(\pt_r\psi)^2\right]\df r\df \theta \df t\\
		&=\iint_{\Ga\ts (0,T)} (\pt_r\psi)^2\df S\df t
	\end{split}
\end{equation}
by Fubini's Theorem and Corollary \ref{02.15.C2} (iii) and $\al\in [1,2)$. 

\end{proof}

\subsection{Multiplier argument}

Let the multiplier 
\begin{equation}\label{03.02.2}
	z_0=(\pi,0),\quad H(z)=z-z_0=(\theta-\pi,r),\quad H\cdot \nabla\psi=(\theta-\pi)\pt_\theta\psi+r\pt_r\psi. 
\end{equation}

We first verify that the multiplier is admissible. Note that  
\begin{equation*}
	\begin{split}
		\iint_{\wh Q}(H\cdot \nabla\psi)^2\df z\df t
		&\leq 2\iint_{\wh Q} \left[(\theta-\pi)^2(\pt_\theta\psi)^2+r^2(\pt_r\psi)^2\right]\df z\df t\\
		&\leq 2\pi^2\iint_{\wh Q} \left[(\pt_\theta \psi)^2+r^\al (\pt_r\psi)^2\right]\df z\df t<+\iy,
	\end{split}
\end{equation*}
and from 
\begin{equation}\label{03.22.1}
	\begin{split}
		\nabla (H\cdot \nabla\psi)=\left(\pt_\theta \psi+(\theta-\pi)\pt_{\theta\theta}\psi+r\pt_{\theta r}\psi, (\theta-\pi)\pt_{\theta r}\psi+\pt_r\psi+r\pt_{rr}\psi\right)
	\end{split}
\end{equation}
we get 
\begin{equation*}
	\begin{split}
		&\iint_{\wh Q} \left[\nabla (H\cdot \nabla\psi)\cdot A\nabla (H\cdot \nabla\psi)\right]\df z\df t\\
		&=\iint_{\wh Q} \left[\pt_\theta \psi+(\theta-\pi)\pt_{\theta\theta}\psi+r\pt_{\theta r}\psi\right]^2\df z\df t+\iint_{\wh Q} r^\al \left[(\theta-\pi)\pt_{\theta r}\psi+\pt_r\psi+r\pt_{rr}\psi\right]^2\df z\df t\\
		&\leq C\left(\iint_{\wh Q} \left[(\pt_\theta \psi)^2+r^\al (\pt_r\psi)^2+(\pt_{\theta\theta}\psi)^2+r^\al (\pt_{\theta r}\psi)^2+r^{\al+2}(\pt_{rr}\psi)^2\right]\df z\df t\right)<+\iy
	\end{split}
\end{equation*}
by Corollary \ref{02.14.C1}, these show that $H\cdot \nabla\psi\in H^1(\Om;w)$. 

We now multiply \eqref{03.01.1} by $H\cdot \nabla\psi$ and integrate over $\wh Q$. This yields
\begin{equation*}
	\begin{split}
		B_1+B_2
		&\equiv \iint_{\wh Q} (\pt_{tt}\psi)(H\cdot \nabla\psi)\df z\df t-\iint_{\wh Q} [\Div(A\nabla\psi)](H\cdot \nabla\psi)\df z\df t\\
		&=\iint_{\wh Q} g(H\cdot \nabla\psi)\df z\df t\equiv B_3.
	\end{split}
\end{equation*}

We next compute $B_1$. From \eqref{01.13.20}, we have $\pt_t\psi\in L^2(0,T; H_\Ga^1(\Om;w))$, and therefore
\begin{equation*}
	\begin{split}
		B_1
		&=\iint_{\wh Q} \pt_t\left[(\pt_t\psi) (H\cdot \nabla\psi)\right]\df z\df t-\iint_{\wh Q} (\pt_t\psi) H\cdot \nabla\pt_t\psi\df z\df t\\
		&=\int_{\wh Q} (\pt_t\psi)(H\cdot\nabla\psi)\df z\bigg|_{t=0}^{t=T}-\f{1}{2}\iint_{\wh Q} [\nabla (\pt_t\psi)^2]\cdot H\df z\df t\\
		&=\int_{\wh Q} (\pt_t\psi)(H\cdot\nabla\psi)\df z\bigg|_{t=0}^{t=T}+\f{1}{2}\iint_{\wh Q} (\pt_t\psi)^2\Div H\df z\df t\\
		&=\int_{\wh Q} (\pt_t\psi)(H\cdot\nabla\psi)\df z\bigg|_{t=0}^{t=T}+\iint_{\wh Q} (\pt_t\psi)^2\df z\df t
	\end{split}
\end{equation*}
by Lemma \ref{01.23.L1} (or similarly \eqref{03.19.1} and \eqref{03.19.2}). 

We then compute $B_2$. Since $H\cdot\nabla\psi\in H^1(\Om;w)$, Corollary \ref{03.01.C6} applies. Combined with \eqref{03.16.2} and the identity $\pt_\theta \psi=0$ on $\Ga\ts (0,T)$ (see Corollary \ref{02.15.C2} (i)), we obtain
\begin{equation*}
	\begin{split}
		B_2
		&=-\iint_{\Ga\ts (0,T)} (A\nabla\psi\cdot \nu)(H\cdot\nabla\psi)\df S\df t+\iint_{\wh Q} A\nabla\psi\cdot \nabla (H\cdot \nabla\psi)\df z\df t\\
		&=-\iint_{\Ga\ts (0,T)} (\pt_r\psi)^2\df S\df t+\iint_{\wh Q}A\nabla\psi\cdot \nabla(H\cdot \nabla\psi)\df z\df t. 
	\end{split}
\end{equation*}

Using \eqref{03.22.1} together with \eqref{03.19.3}-\eqref{03.19.6}, we further compute
\begin{equation*}
	\begin{split}
		&\iint_{\wh Q}A\nabla\psi\cdot \nabla(H\cdot \nabla\psi)\df z\df t\\
		&=\iint_{\wh Q} \left[(\pt_\theta \psi)^2+r^\al (\pt_r\psi)^2\right]\df z\df t\\
		&\hspace{4.5mm}+\f{1}{2}\iint_{\wh Q} \left[(\theta-\pi)\pt_\theta(\pt_\theta\psi)^2+(\theta-\pi)r^\al\pt_\theta (\pt_r\psi)^2+r\pt_r(\pt_\theta \psi)^2+r^{\al+1}\pt_r(\pt_r\psi)^2\right]\df z\df t\\
		&=\f{1}{2}\iint_{\Ga\ts (0,T)}(\pt_r\psi)^2\df S\df t-\f{\al}{2}\iint_{\wh Q} r^\al(\pt_r\psi)^2\df z\df t. 
	\end{split}
\end{equation*}
Then we get 
\begin{equation*}
	\begin{split}
		B_2=-\f{1}{2}\iint_{\Ga\ts (0,T)}(\pt_r\psi)^2\df S\df t-\f{\al}{2}\iint_{\wh Q} r^\al(\pt_r\psi)^2\df z\df t. 
	\end{split}
\end{equation*}

Combining the formulae for $B_1$ and $B_2$, we arrive at
\begin{equation}\label{03.22.4}
	\begin{split}
		B_1+B_2
		&=\int_{\wh \Om} (\pt_t\psi)(H\cdot\nabla\psi)\df z\bigg|_{t=0}^{t=T}+\iint_{\wh Q} (\pt_t\psi)^2\df z\df t\\
		&\hspace{4.5mm}-\f{1}{2}\iint_{\Ga\ts (0,T)}(\pt_r\psi)^2\df S\df t-\f{\al}{2}\iint_{\wh Q} r^\al(\pt_r\psi)^2\df z\df t. 
	\end{split}
\end{equation} 

We now return to the equation for $\psi$. Multiplying \eqref{03.01.1} by $\pt_t\psi$, and using $\pt_t\psi\in L^2(0,T; H_\Ga^1(\Om;w))$ together with Theorem \ref{01.22.T2} (ii), we get
\begin{equation*}
	\begin{split}
		&\f{1}{2}\int_{\wh \Om}\left[(\pt_t\psi)^2+A\nabla\psi\cdot \nabla\psi\right]\df z\bigg|_{t=0}^{t=T}=\iint_{\wh Q} g(\pt_t\psi)\df z\df t.
	\end{split}
\end{equation*}
i.e., 
\begin{equation}\label{03.23.1}
	\begin{split}
		E_\psi(T)-E_\psi(0)=\iint_{\wh Q}g(\pt_t\psi)\df z\df t, 
	\end{split}
\end{equation}
where 
\begin{equation*}
	E_\psi(s)=\f{1}{2}\int_{\wh\Om}\left[(\pt_t\psi)^2+A\nabla\psi\cdot \nabla\psi\right]\df z\bigg|_{t=s}. 
\end{equation*}
This implies that
\begin{equation*}
	\begin{split}
		\int_0^T E_\psi(t)\df t-TE_\psi(0)
		&=\int_0^T\iint_{\wh \Om\ts (0,t)}g(\pt_t\psi)\df z\df t, 
	\end{split}
\end{equation*}
i.e., 
\begin{equation*}
	\begin{split}
		TE_\psi(0)
		&=\int_0^TE_\psi(t)\df t-\int_0^T\iint_{\wh \Om\ts (0,t)}g(\pt_t\psi)\df z\df t\\
		&=\f{1}{2}\iint_{\wh Q}\left[(\pt_t\psi)^2+A\nabla\psi\cdot\nabla\psi\right]\df z\df t-\int_0^T\iint_{\wh \Om\ts (0,t)}g(\pt_t\psi)\df z\df t. 
	\end{split}
\end{equation*}
Then 
\begin{equation}\label{02.12.3}
	\begin{split}
		&\iint_{\wh Q} (\pt_t\psi)^2\df z\df t-\f{\al}{2}\iint_{\wh Q} r^\al (\pt_r\psi)^2\df z\df t\\
		&=\f{2-\al}{2}TE_\psi(0)+\f{2-\al}{2}\int_0^T\iint_{\wh\Om\ts (0,t)} g(\pt_t\psi)\df z\df t+\f{2+\al}{4}\iint_Q (\pt_t\psi)^2\df z\df t\\
		&\hspace{4.5mm}-\f{2-\al}{4}\iint_Q \nabla\psi\cdot A\nabla\psi \df z\df t-\f{\al}{2}\iint_Q r^\al (\pt_r\psi)^2\df z\df t.
	\end{split}
\end{equation} 

On the other hand, multiplying \eqref{03.01.1} by $\psi$ and integrating over $\wh Q$,
and using $\psi\in L^2(0,T; D(\mcA))$ together with Theorem \ref{01.22.T2} (ii), we get 
\begin{equation*}
	\int_\Om (\pt_t\psi)\psi\df z\bigg|_{t=0}^{t=T}=\iint_Q \left[(\pt_t\psi)^2-\nabla \psi\cdot A\nabla\psi\right]\df z\df t+\iint_{\wh Q} g\psi\df z\df t, 
\end{equation*}
and hence
\begin{equation}\label{02.12.8}
	\begin{split}
		&\f{1}{2}\iint_{\Ga\ts (0,T)}(\pt_r\psi)^2\df S\df t+\iint_{\wh Q} g(H\cdot\nabla\psi)\df z\df t\\
		&\hspace{4.5mm}-\f{2-\al}{2}\int_0^T\iint_{\wh \Om\ts (0,t)} g(\pt_t\psi)\df z\df t+\f{2+\al}{4}\iint_{\wh Q} g\psi\df z\df t\\
		&\geq \f{2-\al}{2}TE_\psi(0)+\int_\Om (\pt_t\psi)\left(H\cdot\nabla\psi+\f{2+\al}{4}\psi \right)\df z\df t\bigg|_{t=0}^{t=T}
	\end{split}
\end{equation}
by  \eqref{03.22.4} and \eqref{02.12.3}.  Denote 
\begin{equation*}
	\begin{split}
		X(t)=\int_\Om (\pt_t\psi)\left(H\cdot\nabla\psi+\f{2+\al}{4}\psi \right)\df z, \mbox{ for } t\in [0,T],  
	\end{split}
\end{equation*}
hence, 
\begin{equation*}
	\begin{split}
		|X(t)|\leq \f{\e}{2}\int_\Om (\pt_t\psi)^2\df z+\f{1}{2\e}\int_\Om \left|H\cdot\nabla\psi+\f{2+\al}{4}\psi\right|^2\df z, \mbox{ for all } \e>0. 
	\end{split}
\end{equation*}
Moreover, from Corollary  \ref{02.15.C2} (ii), and  \eqref{03.19.1} and \eqref{03.19.2}, we get 
\begin{equation*}
	\int_{\wh \Om} (H\cdot\nabla \psi)\psi\df z=\f{1}{2}\int_{\wh \Om} H\cdot \nabla \psi^2\df z=-\int_{\wh \Om} \psi^2\df z, 
\end{equation*}
and therefore
\begin{equation*}
	\begin{split} 
		&\int_{\wh \Om} \left|H\cdot\nabla\psi+\f{2+\al}{4}\psi\right|^2\df z\\
		&=\int_{\wh \Om} (H\cdot\nabla \psi)^2\df z+\f{2+\al}{2}\int_{\wh \Om} (H\cdot\nabla\psi)\psi\df z+\f{(2+\al)^2}{16}\int_{\wh \Om} \psi^2\df z\leq 2\int_{\wh \Om} \nabla\psi\cdot A\nabla\psi\df z
	\end{split} 
\end{equation*}
by $\al\in [1,2)$. Consequently, using $B_1+B_2=B_3$, we get 
\begin{equation*}
	\begin{split}
		&\f{1}{2}\iint_{\Ga\ts (0,T)}(\pt_r\psi)^2\df S\df t+\iint_{\wh Q} g(H\cdot\nabla\psi)\df z\df t\\
		&\hspace{4.5mm}-\f{2-\al}{2}\int_0^T\iint_{\wh \Om\ts (0,t)} g(\pt_t\psi)\df z\df t+\f{2+\al}{4}\iint_{\wh Q} g\psi\df z\df t\\
		&\geq \f{2-\al}{2}TE_\psi(0)-\f{\e}{2}\int_\Om (\pt_t\psi)^2\df z-\f{1}{\e}\int_\Om \nabla \psi\cdot A\nabla\psi\df z\\
		&\geq \left(\f{2-\al}{2}T-\max\left\{\f{\e}{2}, \f{1}{\e}\right\}\right)E_\psi(0). 
	\end{split}
\end{equation*}
Finally, from the definition of $\psi$ (namely $\psi=\zeta \vp$), from
\begin{equation*}
	g=-2\nabla\zeta\cdot A\nabla\vp-\vp\Div(A\nabla\zeta)
	=-2\zeta'(\theta)\pt_\theta\vp-\zeta''(\theta)\vp,
\end{equation*}
and from
\begin{equation*}
	\supp g\s \big([2\de_0,3\de_0)\cup (2\pi-3\de_0,2\pi-2\de_0)\big)\ts (0,1)\ts(0,T)\s \om\ts(0,T),
\end{equation*}
we obtain
\begin{equation*}
	|g|\leq C\left(|\vp|+|\pt_\theta\vp|\right)\chi_{\om}.
\end{equation*}
Moreover, since $\psi=\zeta\vp$ and $H=((\theta-\pi),r)$, we have
\begin{equation*}
	|\psi|+|\pt_t\psi|+|H\cdot\nabla\psi|
	\leq C\left(|\vp|+|\pt_t\vp|+|\pt_\theta\vp|+r|\pt_r\vp|\right)
\end{equation*}
on $\wh Q$. As $\alpha\in[1,2)$ and $0<r<1$, one has $r^2\le r^\alpha$. Hence, by Cauchy--Schwarz and Young's inequality,
\begin{equation*}
	\begin{split}
		\left|\iint_{\wh Q} g(H\cdot\nabla\psi)\df z\df t\right|
		&\leq C\iint_{\om\ts(0,T)}\left(|\vp|+|\pt_\theta\vp|\right)
		\left(|\vp|+|\pt_\theta\vp|+r|\pt_r\vp|\right)\df z\df t\\
		&\leq C\iint_{\om\ts(0,T)}\left[\vp^2+A\nabla\vp\cdot\nabla\vp\right]\df z\df t,
	\end{split}
\end{equation*}
and similarly,
\begin{equation*}
	\left|\int_0^T\iint_{\wh \Om\ts (0,t)} g(\pt_t\psi)\df z\df t\right|
	+\left|\iint_{\wh Q} g\psi\df z\df t\right|
	\leq C\iint_{\om\ts(0,T)}\left[\vp^2+(\pt_t\vp)^2+A\nabla\vp\cdot\nabla\vp\right]\df z\df t.
\end{equation*}
Therefore, combining these bounds with the previous inequality, we get 
\begin{equation}\label{03.22.5}
	\begin{split}
		\left[(2-\al)T-\sqrt{2}\right]E_\psi(0)
		&\leq \iint_{\Ga\ts (0,T)} (\pt_r\psi)^2\df S\df t\\
		&\hspace{4.5mm}+C\iint_{\om\ts (0,T)}\left[\vp^2+(\pt_t\vp)^2+A\nabla\vp\cdot\nabla\vp\right]\df z\df t, 
	\end{split}
\end{equation}
where the positive constant $C$ depends only on $\al$ and $\de_0$. 

It remains to estimate the localized component $\xi$. We multiply
\eqref{03.22.2} by $\pt_t\xi$, where
\[
\pt_t\xi\in L^2(0,T; H_\Ga^1(\wt \Om;w)),
\]
and integrate by parts exactly as in \eqref{03.23.1}. This yields
\begin{equation*}
	\begin{split}
		E_\xi(0)=E_\xi(t)+\iint_{\wt\Om\ts (0,t)} g(\pt_t\xi)\df z\df t, 
	\end{split}
\end{equation*}
where 
\begin{equation*}
	E_\xi(t)=\f{1}{2}\int_{\wt \Om} \left[(\pt_t\xi)^2+A\nabla\xi\cdot\nabla\xi\right]\df z\df t, \mbox{ for } t\in [0,T].
\end{equation*}
Since $\wt Q=\om\ts (0,T)$, we get 
\begin{equation*}
	\begin{split} 
	E_\xi(0)
	&\leq \f{1}{T}\int_0^T E_\xi(t)\df t+\left|\iint_{\wt Q} g(\pt_t\xi)\df z\df t\right|\\
	&\leq \f{C}{T}\iint_{\om\ts (0,T)}\left[\vp^2+(\pt_t\vp)^2+A\nabla\vp\cdot\nabla\vp\right]\df z\df t
	+\left|\iint_{\wt Q} g(\pt_t\xi)\df z\df t\right|.
	\end{split} 
\end{equation*}
Using again the bound
\[
|g|\le C\left(|\vp|+|\pt_\theta\vp|\right)\chi_{\om}
\]
and the identity $\pt_t\xi=(1-\zeta)\pt_t\vp$, we obtain
\begin{equation*}
	\begin{split}
		\left|\iint_{\wt Q} g(\pt_t\xi)\df z\df t\right|
		&\le C\iint_{\om\ts(0,T)}\left(|\vp|+|\pt_\theta\vp|\right)|\pt_t\vp|\df z\df t\\
		&\le C\iint_{\om\ts (0,T)}\left[\vp^2+(\pt_t\vp)^2+A\nabla\vp\cdot\nabla\vp\right]\df z\df t.
	\end{split}
\end{equation*}
Therefore
\begin{equation*}
	\begin{split}
		E_\xi(0)
		&\leq C\left(1+\f{1}{T}\right)\iint_{\om\ts (0,T)}\left[\vp^2+(\pt_t\vp)^2+A\nabla\vp\cdot\nabla\vp\right]\df z\df t, 
	\end{split}
\end{equation*}
where the positive constant $C$ depends only on $\al$ and $\de_0$. 

We now combine the estimates for $\psi$ and $\xi$. Since $\vp=\psi+\xi$, we have
\begin{equation*}
	\begin{split}
		E(0)
		&=\int_\Om \left[(\pt_t\vp)^2+A\nabla\vp\cdot \nabla\vp\right]\df z\bigg|_{t=0}\\
		&=\int_\Om \left[(\pt_t\psi+\pt_t\xi)^2+A(\nabla\psi+\nabla \xi)\cdot (\nabla\psi+\nabla\xi)\right]\df z\bigg|_{t=0} \\
		&\leq 2\int_\Om \left[(\pt_t\psi)^2+(\pt_t\xi)^2+A\nabla\psi\cdot \nabla\psi+A\nabla\xi\cdot \nabla\xi \right]\df z\bigg|_{t=0}=2E_\psi(0)+2E_\xi(0), 
	\end{split}
\end{equation*}
Combining this inequality with \eqref{03.22.5}, we obtain
\begin{equation*}
	\begin{split} 
	&\left[(2-\al)T-\sqrt{2}\right]E(0)\\
	&\leq C\iint_{\Ga\ts (0,T)}(\pt_r\vp)^2\df S\df t+C\iint_{\om\ts (0,T)}\left[\vp^2+(\pt_t\vp)^2+A\nabla\vp\cdot\nabla\vp\right]\df z\df t,
	\end{split} 
\end{equation*}
where the positive constant $C$ depends on $T$, $\al$, and $\de_0$.

\clearpage
\pagestyle{plain}
\markright{}

	\section*{Acknowledgments}
	The authors would like to thank Shugeng Chai (Shanxi University) and Yan He (Hubei University) for helpful discussions related to the observability problem considered in this paper.

\end{document}